\newcommand{\ra}{\rightarrow}
\newcommand{\cO}{\mathcal{O}}
\newcommand{\rk}{\mbox{rk}}
\theoremstyle{plain}
\newtheorem{theorem}{Theorem}[section]
\newtheorem{lem}[theorem]{Lemma}
\newtheorem{prop}[theorem]{Proposition}
\newtheorem{cor}[theorem]{Corollary}
\newtheorem{conj}[theorem]{Conjecture}
\newtheorem{pr}[theorem]{Problem}
\newtheorem{rem}[theorem]{Remark}
\numberwithin{equation}{section}
\begin{document}
\title[Clifford Indices]{Clifford Indices for Vector Bundles on Curves}

\author{H. Lange}

\address{Mathematisches Institut\\
              Universit\"at Erlangen-N\"urnberg\\
              Bismarckstra\ss e $1\frac{ 1}{2}$\\
              D-$91054$ Erlangen\\
              Germany}
              \email{lange@mi.uni-erlangen.de}
\author{P. E. Newstead}
\address{Department of Mathematical Sciences\\
              University of Liverpool\\
              Peach Street, Liverpool L69 7ZL, UK}
\email{newstead@liv.ac.uk}

\thanks{Both authors are members of the research group VBAC (Vector Bundles on Algebraic Curves). The second author 
would like to thank the Mathematisches Institut der Universit\"at 
         Erlangen-N\"urnberg for its hospitality}
\keywords{Semistable vector bundle, Clifford index, gonality, Brill-Noether theory.}
\subjclass{Primary: 14H60; Secondary: 14F05, 32L10}

\begin{abstract}
For smooth projective curves of genus $g\ge4$, the Clifford index is an important invariant which provides a bound for the dimension of the
space of sections of a line bundle.  This is the first step in distinguishing curves of the same genus. In this 
paper we generalise this to introduce Clifford indices for semistable vector bundles on curves. We study these invariants,
giving some basic properties and carrying out some computations for small ranks and for general and some special curves. 
For curves whose classical Clifford index is two, we compute all values of our new Clifford indices.
\end{abstract}
\maketitle

\section{Introduction}

Let $C$ be a smooth projective curve of genus $g\ge4$ defined over an algebraically closed field of characteristic $0$. If $L$ is a line bundle on $C$ with space of sections $H^0(L)$ of dimension $h^0(L)\ge2$, then evaluation of sections defines a morphism $C\to {\mathbb P}^{(h^0(L)-1)}$. These morphisms yield much information about the geometry of $C$, in particular about the possible projective embeddings of $C$ and the syzygies resulting from such embeddings. It is therefore important to obtain precise upper bounds on $h^0(L)$ in terms of the degree of $L$. These upper bounds depend on the curve, not just on the value of $g$, and a first measure of the possible bounds is given by the Clifford index of $C$, whose definition runs as follows. 
We consider line bundles $L$ on $C$ and define the {\em Clifford index} $\gamma_1$ of $C$ by
$$
\gamma_1=\min_L\left\{\deg L-2(h^0(L)-1)\;|\;h^0(L)\ge 2,h^1(L)\ge2\right\}
$$
or equivalently
$$
\gamma_1=\min_L\left\{\deg L-2(h^0(L)-1)\;|\;h^0(L)\ge 2,\deg L\le g-1\right\}.
$$
(The equivalence of the two definitions follows from Serre duality. The reason for requiring $g\ge4$ is to ensure the existence of line bundles as in the definition.)

It is natural to generalise this to higher rank and in particular to semistable (or stable) vector bundles. The restriction to semistable bundles is natural as it allows for restrictions on the dimension of the space of sections essentially identical to those which exist for line bundles. In particular Clifford's Theorem has been extended to semistable bundles; the simplest proof of this is due to 
G. Xiao and appears as \cite[Theorem 2.1]{bgn}. Semistable bundles also arise naturally in the study of syzygies in connection with conjectures of Green and Lazarsfeld \cite{g,gl} (in this context, see \cite{t4}). Moreover the moduli spaces of semistable bundles are objects of interest in their own right. We may note that the existence of semistable bundles with specified numbers of sections has recently been used in two papers \cite{gt,cgt} which obtain new information on the base locus of the generalised theta-divisor; the first of these papers extends results of Arcara \cite{a}, Popa \cite{p} and Schneider \cite{s}, while the second uses also the strange duality theorem, recently proved in \cite{be1,be2,mo}.

A key r\^ole in some of these papers is played by the evaluation sequence
\begin{equation}\label{eq0.1}
0\ra  E_L^*\ra H^0(L)\otimes{\mathcal O}\ra L\ra 0, 
\end{equation}
where $L$ is a line bundle generated by its sections. For our purposes the key issue is the stability of $E_L$, which was proved for $L=K_C$ in \cite{pr} and for $\deg L\ge2g+1$ by Ein and Lazarsfeld \cite{el}. Subsequently David Butler \cite{b,bu} considered more generally the sequence
\begin{equation}\label{eq0.2}
0\ra M_{V,E}^*\ra V\otimes{\mathcal O}\ra E\ra0,
\end{equation}
where $E$ is a vector bundle and $V$ is a linear subspace of $H^0(E)$ which generates $E$. (The construction of \eqref{eq0.2} has come to be known as the \emph{dual span construction}.) Butler showed \cite[Theorem 1.2]{b} that, if $V=H^0(E)$ and $E$ is semistable (stable) of slope $\ge2g$, then $M_{V,E}$ is semistable (stable) with a minor exception in the case of stability when the slope is equal to $2g$. Results for line bundles of smaller degree have been obtained in \cite{bu,bp,bbn,bo}. There is an important conjecture of Butler (which we discuss briefly in the final section) that the bundles $M_{V,E}$ are semistable ``in  general''.

The Brill-Noether locus $B(n,d,k)$ in the moduli space of stable bundles of rank $n$ and degree $d$ on $C$ is comprised of those bundles $E$ for which $h^0(E)\ge k$. These loci have been studied for around 20 years and a good deal 
is known about their non-emptiness. However almost all the results are either true 
for any $C$ \cite{bgn,bmno,m1,m2} or for general $C$ only \cite{t1,t2,t3}. Precise results are known for hyperelliptic curves 
\cite[Section 6]{bmno} and for bielliptic curves \cite{ba} but little has been done on other special curves. 
The main exceptions to this are papers of R. Re \cite{re} and V. Mercat \cite{m} and a recent paper 
by L. Brambila-Paz and A. Ortega \cite{bo}.  These papers use only the classical Clifford index $\gamma_1$ which is defined using line bundles, although in many respects \cite{m} is the staring point for our investigations.

In \cite{ba2}, E. Ballico gave five definitions of Clifford indices for vector bundles but did not develop the concept to any significant extent. We give two definitions, both using semistable bundles (whereas Ballico used indecomposable and stable bundles). Our definitions differ from those of Ballico in other respects as well, in that we do not assume that our bundles are generated by their sections (although in fact most of our examples are so generated) and we consider only bundles whose slope is at most $g-1$ (whereas Ballico requires only that $h^1(E)\ne0$). In fact we define, for any vector bundle $E$ of rank $r$ and degree $d$,
$$
\gamma(E) := \frac{1}{n} \left(d - 2(h^0(E) -n)\right) = \mu(E) -2\frac{h^0(E)}{n} + 2,
$$
where $\mu(E)=\frac{d}n$.
(Ballico defines $\text{Cliff}(E)$ in the same way but without the scaling factor $\frac{1}{n}$.)
We then define $\gamma_n$ to be the minimum value of $\gamma(E)$ for semistable bundles $E$ of rank $n$ with $h^0(E)\ge n+1$ and $\mu(E)\le g-1$. Our second index $\gamma_n'$ is defined similarly, but with $h^0(E)\ge2n$. For line bundles, the two definitions coincide  
and reduce to the classical Clifford index. The use of semistable, rather than stable, bundles is likely to give better specialisation properties, although the question of Clifford 
indices for stable bundles is also of interest and will undoubtedly be investigated further in the future. It may be noted that there are results in the literature giving bounds on $h^0(E)$ 
for indecomposable bundles \cite{tan} and also for bundles 
of rank $\le3$ \cite{cs,ln} in terms of degrees of stability, but, for the reasons stated earlier, we feel that semistable bundles form the most natural context for these ideas.

Another natural question to ask is why we use at least $n+1$ independent sections for the definition of $\gamma_n$. The first reason is that the question of non-emptiness of Brill-Noether loci has been completely solved for $h^0(E)\le n$ (see \cite{bgn,bmno}) and depends only on the genus of $C$. More fundamentally, the existence of semistable bundles with $h^0(E)\ge n+1$ is closely linked with the non-emptiness of certain Quot schemes and the existence of stable maps from $C$ to a Grassmannian (see \cite{pro}), both of which have implications for the geometry of $C$. 

We now summarise the contents of the paper.
In Section 2, we give the definitions of $\gamma_n$ and $\gamma_{n}'$ and obtain some 
elementary properties. In Section 3, we relate our invariants to the conjecture of Mercat, 
which is a strengthened version of the assertion that $\gamma_{n}'=\gamma_1$.  We then make 
some deductions from the results of \cite{m}, including an almost complete determination 
of the values of $\gamma_n$ for $n\ge g-3$ (Theorem \ref{thm3.5}). 

Section 4 is the central section of the paper. In it, we introduce the invariants 
$$
d_r := \min \{ \deg L \;|\; L \; \mbox{a line bundle on} \; C \; \mbox{with} \; h^0(L) \geq r +1\},
$$ 
which form the {\em gonality sequence} of $C$; these invariants play an important r\^ole 
in the theory of special curves and are completely known in many cases. We describe 
the properties of these invariants which we need later in the paper. We then introduce 
the dual span construction \eqref{eq0.1}. 
We verify Butler's conjecture in the case of line bundles of degree $d_n$ under certain conditions 
on the gonality sequence (Proposition \ref{prop5.1}). This allows us to prove our first main theorem.

\medskip
\noindent{\bf Theorem \ref{pr4.9}.}
\emph{Let $E$ be a semistable bundle of rank $n$ and degree $d_n$.} 

(a): \emph{If $\frac{d_p}{p} \geq \frac{d_{p+1}}{p+1}$ for all $p < n$ and $d_n \neq nd_1$, then
$$
h^0(E) \leq n+1
$$
and there exist semistable bundles of rank $n$ and degree $d_n$ with $h^0 = n+1$.}

(b): \emph{If $d_n = nd_1$, then 
$$
h^0(E) \leq 2n
$$
and there exist semistable bundles of rank $n$ and degree $d_n$ with $h^0 = 2n$.}

(c): \emph{If $\frac{d_p}{p} \geq \frac{d_n}{n}$ for all $p < n$ and $E$ is stable, then 
$$
h^0(E) \leq n+1.
$$}

\noindent
As a corollary (Corollary \ref{cor4.16}) we show that Mercat's conjecture holds for semi\-stable 
bundles of rank $n$ and degree $\le d_n$, again under certain conditions on the gonality sequence.  
We also complete the computation of $\gamma_n$ for $n\ge g-3$ (Theorem \ref{thm4.14}). 
For a curve with $\gamma_1=2$, this allows us to compute all values of $\gamma_n$ 
(Corollary \ref{cor4.15}). We complete the section by obtaining an upper bound for $\gamma_n$ and 
lower bounds for $\gamma(E)$ dependent on the existence of certain subbundles. 

In Section 5, we prove the following two theorems for bundles of rank $2$.

\medskip
\noindent{\bf Theorem \ref{prop4.7}.}
\emph{$\quad \gamma_2 = \min \left\{ \gamma_2', \frac{d_2}{2} - 1\right\}$.}

\medskip
\noindent{\bf Theorem \ref{prop4.8}.}
\emph{$\quad \gamma_2' \geq \min \left\{ \gamma_1, \frac{d_4}{2} -2 \right\}$.}

\noindent These theorems yield the precise formula $\gamma_2=\min\left\{\gamma_1,\frac{d_2}2-1\right\}$ (Corollary \ref{cor4.9}). In particular, $\gamma_2$ is not determined by $\gamma_1$.

\medskip
In Sections 6 and 7, we extend this partially to ranks $3$, $4$ and $5$, obtaining the following results.

\medskip
\noindent{\bf Theorem \ref{prop5.4}.}
\emph{Suppose $\frac{d_2}{2} \geq \frac{d_3}{3}$. Then
$$
\gamma_3 = \min \left\{ \gamma_3', \frac{1}{3}(d_3 - 2) \right\}.
$$
}

\medskip
\noindent{\bf Theorem \ref{prop7.1}.}
\emph{If $ \frac{d_3}{3} \geq \frac{d_4}{4}$, then
$$
\gamma_4 = \min \left\{ \gamma_4', \frac{1}{4}(d_4 -2), \frac{1}{2}(d_2-2) \right\}.
$$
} 

\medskip
\noindent{\bf Theorem \ref{prop8.1}.}
\emph{If $ \frac{d_p}{p} \geq \frac{d_{p+1}}{p+1}$ for $1 \leq p \leq 4$, then
\begin{eqnarray*}
\gamma_5 &\geq  \min &\left\{  \gamma_5', \frac{1}{2}(d_2 -2), \frac{1}{5}(d_5 -2),  \frac{1}{5}(d_1 + 2d_2 - 6), 
\frac{1}{5}(d_1 + d_4 -4), \right.\\
&& \left. \hspace*{1.2cm} \frac{1}{5}(2d_1 + d_3 - 6), \frac{1}{5}(3d_1 + d_2 -8) , \frac{1}{5}(d_2 + d_3 -5) \right\}. 
\end{eqnarray*}
}

\noindent This last result looks weaker than we would hope, but we show that for a general 
curve it gives the following much more precise result.

\medskip
\noindent{\bf Corollary \ref{cor7.6}}
\emph{Let $C$ be a general curve. Then
$$
\gamma_5 = \min \left\{ \gamma_5', \frac{1}{5}\left(g - \left[ \frac{g}{6} \right] + 3\right) \right\}.
$$
}

In Section 8 we consider smooth plane curves. In this case we know the gonality sequence 
precisely by a theorem of Noether. Although such curves do not satisfy all the conditions mentioned 
earlier, we are able to carry out the same analysis and to obtain good results for $n\le5$.

In the final section, we discuss some problems.

Our main arguments depend on a result of Paranjape and Ramanan \cite[Lemma 3.9]{pr} 
and on Mercat's paper \cite{m} as well as on the dual span construction. We have also 
made much use of results on special curves due to Gerriet Martens and his collaborators. 
We are grateful to him for some useful discussions and for drawing our attention to a number of papers. 

Throughout the paper $C$ will be a smooth curve of genus $g\ge4$ defined over an algebraically closed field 
of characteristic 0. 
We recall that, for a vector bundle $E$ of rank $n$ and degree $d$, the {\em slope} $\mu(E)$ is 
defined by $\mu(E):=\frac{d}n$.

We are grateful to the referee for pointing out the reference \cite{ba2}.

\section{Definition of $\gamma_n$ and $\gamma_n'$}

Let $C$ be a smooth projective curve of genus $g \geq 4$. For any vector bundle $E$ of rank $n$ and degree $d$ on $C$ consider
$$
\gamma(E) := \frac{1}{n} \left(d - 2(h^0(E) -n)\right) = \mu(E) -2\frac{h^0(E)}{n} + 2.
$$
The proof of the following lemma is a simple computation.
\begin{lem} \label{lem1}
$\gamma(K_C \otimes E^*) = \gamma(E)$. \hspace*{6.2cm}$\square$
\end{lem}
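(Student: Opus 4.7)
The plan is to perform the direct computation indicated by the author's remark that the proof is a simple computation. I would set $E$ to have rank $n$ and degree $d$, so that $F := K_C\otimes E^*$ also has rank $n$ but degree $n(2g-2)-d$, and hence slope $\mu(F)=2g-2-\mu(E)$. All that remains is to compare the section terms.

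The first step is to rewrite $h^0(F)$ in terms of invariants of $E$. By Serre duality,
$$
h^0(K_C\otimes E^*) = h^1(E),
$$
and by Riemann--Roch applied to $E$,
$$
h^1(E) = h^0(E) - d + n(g-1).
$$
Substituting this into the definition gives
$$
\gamma(F) = \mu(F) - \frac{2 h^0(F)}{n} + 2 = \bigl(2g-2-\tfrac{d}{n}\bigr) - \frac{2\bigl(h^0(E)-d+n(g-1)\bigr)}{n} + 2.
$$

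The final step is algebraic simplification. Expanding the last expression, the constant $2g-2$ cancels against $-2(g-1)$ from the Riemann--Roch term, and the $-d/n$ combines with $+2d/n$ to produce $+d/n = \mu(E)$; what remains is exactly $\mu(E)-\tfrac{2h^0(E)}{n}+2=\gamma(E)$. There is no real obstacle here: the only thing to be careful about is tracking signs when substituting the Riemann--Roch expression for $h^1(E)$, and making sure the ranks of $E$ and $K_C\otimes E^*$ are the same $n$ (so that the scaling factor $1/n$ in the definition of $\gamma$ matches on both sides).
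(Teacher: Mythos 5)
Your computation is correct and is exactly the "simple computation" the paper alludes to before omitting the proof: Serre duality to identify $h^0(K_C\otimes E^*)$ with $h^1(E)$, Riemann--Roch to express $h^1(E)$ in terms of $h^0(E)$, $d$, $n$, $g$, and then cancellation. There is nothing to add.
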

For any positive integer $n$ we 
define the following invariants of $C$:
$$
\gamma_n := \min_{E} \left\{ \gamma(E) \left|  
\begin{array}{c}   E \;\mbox{semistable of rank}\; n \\
h^0(E) \geq n+1,\; \mu(E) \leq g-1
\end{array} \right\} \right.
$$
and
$$
\gamma_n' := \min_{E} \left\{ \gamma(E) \;\left| 
\begin{array}{c} E \;\mbox{semistable of rank}\; n \\
h^0(E) \geq 2n,\; \mu(E) \leq g-1
\end{array} \right\}. \right.
$$
Note that $\gamma_1 = \gamma_1'$ is the usual Clifford index of the curve $C$. We say that $E$ {\it contributes to} 
$\gamma_n$ (respectively $\gamma_n'$) if $E$ is semistable of rank $n$ with $\mu(E) \leq g-1$ and $h^0(E) \geq n+1$ 
(respectively $h^0(E) \geq 2n$). If in addition 
$\gamma(E) = \gamma_n$ (respectively $\gamma(E) = \gamma_n'$), we say that $E$ {\it computes} $\gamma_n$ 
(respectively $\gamma_n'$).

\begin{lem} \label{lemma2.2}
If $p|n$, then $\gamma_n \leq \gamma_p$ and 
$\gamma_n \leq \gamma_n' \leq \gamma_p'$.
\end{lem}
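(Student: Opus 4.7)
The plan is to handle all three inequalities by a single direct-sum construction. Write $n = mp$ with $m \geq 1$. Given any bundle $F$ of rank $p$ contributing to $\gamma_p$ (respectively $\gamma_p'$), I would consider $E := F^{\oplus m}$. The claim is that $E$ contributes to $\gamma_n$ (respectively $\gamma_n'$) and that $\gamma(E) = \gamma(F)$; taking the infimum over admissible $F$ then yields $\gamma_n \leq \gamma_p$ (respectively $\gamma_n' \leq \gamma_p'$).

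The verifications are routine. A direct sum of semistable bundles of equal slope is semistable, so $E$ is semistable of rank $n$. The slope is preserved, $\mu(E) = \mu(F) \leq g-1$, and cohomology is additive, $h^0(E) = m \cdot h^0(F)$. In the $\gamma_n$ case this gives $h^0(E) \geq m(p+1) = n + m \geq n+1$, and in the $\gamma_n'$ case $h^0(E) \geq 2mp = 2n$, so $E$ satisfies the required section bound. Substituting into $\gamma(E) = \mu(E) - 2 h^0(E)/n + 2$ yields $\gamma(E) = \mu(F) - 2 h^0(F)/p + 2 = \gamma(F)$, so $\gamma$ is invariant under this operation.

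The remaining inequality $\gamma_n \leq \gamma_n'$ is immediate from the observation that $2n \geq n+1$ for $n \geq 1$, so every bundle contributing to $\gamma_n'$ also contributes to $\gamma_n$, and the minimum defining $\gamma_n$ is taken over a larger family. I do not expect any real obstacle. The only subtlety worth noting is that $F^{\oplus m}$ is in general only polystable, which is precisely why the definitions in the paper are framed in terms of semistable, rather than stable, bundles; if one tried to prove the analogous statement for stable Clifford indices, this direct-sum trick would fail and a genuinely new argument would be needed.
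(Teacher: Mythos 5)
Your proposal is correct and follows essentially the same route as the paper: the paper likewise takes a bundle computing $\gamma_p$ (resp.\ $\gamma_p'$), forms the direct sum of $n/p$ copies, and observes that $\gamma$ is unchanged, with $\gamma_n\le\gamma_n'$ being immediate from the containment of the defining families. Your additional remarks on verifying the section counts and on why the argument would fail for stable Clifford indices are accurate but not needed beyond what the paper records.
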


\begin{proof}
Let $E$ be a vector bundle computing $\gamma_p$. Then $\gamma(\oplus_{i=1}^{\frac{n}{p}}E) = \gamma_p$
which gives the first assertion.
It is obvious that $\gamma_n \leq \gamma_n'$. The proof of the last inequality 
is the same as the proof of the first statement.
\end{proof}

\begin{lem}  \label{lem2.1}
$$
0 \leq \gamma_n \leq \frac{1}{n} \left(g - \left[\frac{g}{n+1} \right] +n-2 \right). 
$$
\end{lem}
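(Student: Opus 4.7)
The plan is to establish the two bounds separately: the lower bound from Clifford's inequality for semistable bundles, and the upper bound by exhibiting an explicit rank-$n$ semistable bundle obtained from the dual span (Lazarsfeld--Mukai) construction applied to a line bundle of minimal degree with $n+1$ sections.

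For the lower bound $\gamma_n \geq 0$, let $E$ contribute to $\gamma_n$. Since $h^0(E) \geq n+1 > 0$, any nonzero section yields a line subbundle of $E$ of nonnegative degree; semistability of $E$ then forces $\mu(E) \geq 0$. Combined with the hypothesis $\mu(E) \leq g-1 \leq 2g-2$, this places $E$ in the range of the semistable version of Clifford's theorem \cite[Theorem 2.1]{bgn}, giving $h^0(E) \leq \frac{1}{2}\deg E + n$, which is exactly $\gamma(E) \geq 0$.

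For the upper bound, the first step is to invoke the Kempf--Kleiman--Laksov existence theorem: on any curve $C$, the Brill--Noether locus $W^n_d(C)$ is nonempty whenever $\rho = g - (n+1)(g-d+n) \geq 0$, and this holds at $d = n + g - [g/(n+1)]$. Hence there is a line bundle $L$ on $C$ with $h^0(L) \geq n+1$ and $\deg L \leq n + g - [g/(n+1)]$. Take $L$ of minimum such degree $d$; minimality forces $L$ to be base-point free, because twisting by $-p$ at a base point $p$ would contradict minimality. Next pick an $(n+1)$-dimensional subspace $V \subset H^0(L)$ that still generates $L$ and form the dual-span sequence
$$0 \lra M_{V,L} \lra V \otimes \cO \lra L \lra 0.$$
Setting $E := M_{V,L}^*$, the dualized sequence combined with $h^0(L^{-1}) = 0$ (which holds since $\deg L > 0$) yields a rank-$n$ bundle of degree $d$ with $h^0(E) \geq n+1$; a short check confirms $\mu(E) = d/n \leq g-1$ for $g \geq 4$. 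Once semistability of $E$ is established, $E$ contributes to $\gamma_n$, and
$$\gamma(E) \leq \frac{d-2}{n} \leq \frac{g - [g/(n+1)] + n - 2}{n}.$$

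The main obstacle is to confirm the semistability of $E = M_{V,L}^*$, which is a case of Butler's conjecture and is verified in this paper only under gonality-sequence hypotheses (Proposition \ref{prop5.1}); Butler's original unconditional result in \cite{b} requires $\deg L \geq 2g$, which fails in the range we need. A safer route for the unconditional statement of Lemma \ref{lem2.1} is to bypass the explicit construction and appeal instead to nonemptiness of higher-rank Brill--Noether loci in the relevant numerical range (the corresponding Brill--Noether number is nonnegative throughout), producing a semistable rank-$n$ bundle with the required invariants without having to exhibit it as a dual span.
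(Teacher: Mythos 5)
Your lower bound coincides with the paper's argument: the semistable Clifford theorem \cite[Theorem 2.1]{bgn} gives $h^0(E)\le \frac{1}{2}\deg E+n$, i.e.\ $\gamma(E)\ge 0$, and your preliminary check that $\mu(E)\ge 0$ (via the line subbundle generated by a nonzero section) is a correct, if usually suppressed, verification that $E$ lies in the range of that theorem. That half is fine.

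The upper bound, however, is not actually proved in your proposal. You correctly identify the obstruction in your primary route: semistability of $M_{V,L}^*$ for $L$ of this degree is precisely Butler's conjecture in a range where neither \cite[Theorem 1.2]{b} nor Proposition \ref{prop5.1} applies unconditionally. But your fallback does not close the gap. There is no theorem asserting that higher-rank Brill--Noether loci on an \emph{arbitrary} curve are nonempty whenever the corresponding Brill--Noether number is nonnegative; as the introduction of the paper stresses, the known nonemptiness results hold either for small slopes on any curve or for general curves only. The paper's actual argument is in two steps: first, it cites \cite[Theorem 2]{bu} or \cite[Proposition 4.1 (ii)]{bp} for the existence, on a \emph{general} curve, of a semistable bundle of rank $n$ and degree $g-\left[\frac{g}{n+1}\right]+n$ with $h^0\ge n+1$; second, it transfers this to every curve by semicontinuity (properness of the relative moduli space of semistable bundles over a family of curves, together with closedness of the condition $h^0\ge n+1$). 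This specialization step is the idea missing from your proposal; without it, or a proof of semistability of the dual span at degree $d_n$, the upper bound is not established.
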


\begin{proof}
If $E$ is a vector bundle computing $\gamma_n$, we have by 
\cite[Theorem 2.1]{bgn} that $h^0(E) \leq \frac{\deg E}{2} + n$, which implies $\gamma(E) \geq 0$. Hence $\gamma_n \geq 0$.

From \cite[Theorem 2]{bu} or \cite[Proposition 4.1 (ii)]{bp} we know that on a general curve there exist semistable vector bundles of rank $n$ and degree 
$d = g - \left[\frac{g}{n+1} \right] + n$ with $h^0(E) \geq n+1$. Since $g \geq 4$, we have $\mu(E) \leq g-1$. 
By semicontinuity this is valid on any curve $C$.
Hence
$$
\gamma_n \leq \frac{1}{n} \left( g - \left[ \frac{g}{n+1} \right] + n-2 \right).
$$ 
\end{proof}

\begin{cor} \label{cor2.3}
Suppose $g \geq 7$. For a general curve $C$ and every $n \geq 3$, we have
$$
\gamma_n < \gamma_1.
$$
\end{cor}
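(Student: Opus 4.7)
The plan is to invoke Lemma \ref{lem2.1} to upper-bound $\gamma_n$ and then compare with the known value of $\gamma_1$ on a general curve. On a general curve of genus $g$, the classical Clifford index is maximal, namely $\gamma_1 = \lfloor (g-1)/2 \rfloor$; for $g\ge 7$ this gives in particular $\gamma_1 \ge 3$ and also $\gamma_1 \ge (g-2)/2$. The task then reduces to checking that the upper bound of Lemma \ref{lem2.1} is strictly smaller than $(g-2)/2$ for every $n\ge 3$ (or already strictly smaller than $2$, when we only need $\gamma_1 \ge 3$).

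I would split into two cases depending on the value of $\lfloor g/(n+1) \rfloor$. In the range $3\le n\le g-1$ one has $\lfloor g/(n+1) \rfloor \ge 1$, so Lemma \ref{lem2.1} gives
\[
\gamma_n \;\le\; \frac{1}{n}(g + n - 3) \;=\; 1 + \frac{g-3}{n} \;\le\; \frac{g}{3},
\]
and the inequality $g/3 < (g-2)/2$ is elementary and holds if and only if $g>6$. In the complementary range $n\ge g$ we have $\lfloor g/(n+1) \rfloor = 0$, and Lemma \ref{lem2.1} gives
\[
\gamma_n \;\le\; \frac{g + n - 2}{n} \;=\; 1 + \frac{g-2}{n} \;<\; 2 \;\le\; \gamma_1.
\]
Combining the two cases completes the argument.

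There is no real obstacle: the proof reduces entirely to a short numerical comparison using Lemma \ref{lem2.1} and the standard value of $\gamma_1$ on a general curve. The only subtlety is that the floor functions appearing both in $\gamma_1$ and in the bound of Lemma \ref{lem2.1} must be handled with some care in the boundary cases $g=7,8$; the case split according to whether $n<g$ or $n\ge g$ is precisely what keeps these cases elementary.
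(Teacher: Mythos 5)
Your proof is correct and follows essentially the same route as the paper: both arguments apply the upper bound of Lemma \ref{lem2.1} and compare it with $\gamma_1=\left[\frac{g-1}{2}\right]$ on a general curve, reducing everything to elementary arithmetic. The only difference is cosmetic — the paper handles the floor $\left[\frac{g}{n+1}\right]$ by the uniform estimate $\frac{g}{n+1}-1<\left[\frac{g}{n+1}\right]$ and arrives at the single condition $g\geq 4+\frac{6n-2}{n^2-n}$, whereas you split into the ranges $n\leq g-1$ and $n\geq g$; both verifications are sound.
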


\begin{proof}
On a general curve we know $\gamma_1 = \left[ \frac{g-1}{2} \right]$. According to Lemma \ref{lem2.1} it suffices to show
\begin{equation} \label{eqn2.1}
\frac{1}{n} \left(g - \left[\frac{g}{n+1} \right] + n - 2 \right) < \left[\frac{g-1}{2} \right].
\end{equation}
Suppose $n \geq 3$.
Since $\frac{g}{n+1} - 1 < [\frac{g}{n+1}]$ and $\frac{g}{2} - 1 \leq [\frac{g-1}{2}]$, this is implied by
$$
\frac{1}{n} \left(g -\frac{g}{n+1} +1 +n - 2 \right) \leq \frac{g}{2} - 1, 
$$ 
which is equivalent to
$$
g \geq \frac{(4n-2)(n+1)}{(n+1)(n-2) + 2} = 4 + \frac{6n -2}{n^2 - n}.
$$
This is valid for $g \geq 7$. 
\end{proof}

\begin{rem}
\emph{Corollary \ref{cor2.3} remains valid for $g=5$ and for $g = 6,\; n \geq 4$ (for $g=5, \; n\leq 6$ one needs to check 
directly in \eqref{eqn2.1}). The corollary is also valid for $n = 2$, provided $g \geq 7, \; g \neq 8$.
In fact, for $g \geq 9$ the same proof works. The case $g = 7$ can be checked from \eqref{eqn2.1}.}
\end{rem}

\begin{prop} \label{prop3.2}
\emph{(a)} If $\gamma_1 = 0$ or $1$, then for all $n$,
$$
\gamma_n = \gamma_n' = \gamma_1.
$$
\emph{(b)} If $\gamma_1 \geq 2$, then $\gamma_n \geq 1$ for all $n$.
\end{prop}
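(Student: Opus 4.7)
My plan is to deduce the proposition from the sandwich $0 \leq \gamma_n \leq \gamma_n' \leq \gamma_1$, obtained by combining Lemma \ref{lem2.1} with Lemma \ref{lemma2.2} applied with $p = 1$ (using $\gamma_1 = \gamma_1'$), together with a single rigidity claim: if $\gamma_1 \geq 1$, then no semistable bundle $E$ of rank $n$ contributing to $\gamma_n$ satisfies $\gamma(E) < 1$. Given this claim, the case $\gamma_1 = 0$ of part (a) is immediate from the sandwich, the case $\gamma_1 = 1$ follows because the claim gives $\gamma_n \geq 1$, which combines with the upper bound $\gamma_n \leq \gamma_n' \leq 1$ to force $\gamma_n = \gamma_n' = 1$, and part (b) is an immediate consequence since $\gamma_1 \geq 2$ implies $\gamma_1 \geq 1$.

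For the rigidity claim, I would use the dual span (Paranjape--Ramanan) construction. Assume $E$ is semistable of rank $n$ with $h^0(E) = n+k \geq n+1$, $\mu(E) \leq g-1$, and $\gamma(E) < 1$, so that $d < n + 2k$ where $d := \deg E$. Choosing a general $(n+1)$-dimensional subspace $V \subseteq H^0(E)$ that generates $E$ yields the short exact sequence
\[
0 \to L^* \to V \otimes \mathcal{O} \to E \to 0
\]
with $L$ a line bundle of degree $d$ and $h^0(L) \geq n+1$ (from the dualised cohomology sequence). Applying the classical Clifford theorem to $L$ --- after using Lemma \ref{lem1} together with Serre duality to replace $L$ by $K_C \otimes L^{-1}$ if $\deg L > g-1$ --- gives $d \geq 2n + \gamma_1$, and combining this with $d < n + 2k$ produces $\gamma_1 < 2k - n$. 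When $k \leq (n+1)/2$ this already forces $\gamma_1 \leq 0$, hence $\gamma_1 = 0$. For larger $k$ I would iterate the construction with higher-dimensional subspaces $V \subseteq H^0(E)$, producing auxiliary rank-$j$ semistable bundles $M$ whose $\gamma_j \geq 1$ (available by induction on rank) delivers the same contradiction.

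The main obstacle is the technical handling of the degenerate scenarios in the dual span: when $V$ fails to globally generate $E$, one must restrict to the subsheaf generated by $V$; when $E$ is only strictly semistable, one reduces via its Jordan--H\"older factors (which share the slope $\mu(E) \leq g-1$); and when $\deg L$ lies outside the Clifford range $[0,g-1]$, one invokes Serre duality. Moreover, to close the induction on rank in the large-$k$ case, one must verify that each auxiliary bundle $M$ produced by the iterated construction is itself semistable with $\mu(M) \leq g-1$ and $h^0(M) \geq \rk(M) + 1$, so that the inductive hypothesis $\gamma_j \geq 1$ for $j < n$ can legitimately be invoked. Once these technicalities are settled, the numerical contradiction closes the proof.
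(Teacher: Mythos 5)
Your reduction of both parts to the single claim ``$\gamma_1\ge1$ implies $\gamma_n\ge1$'', via the sandwich $0\le\gamma_n\le\gamma_n'\le\gamma_1$ from Lemmas \ref{lem2.1} and \ref{lemma2.2}, is exactly the structure of the paper's proof. The difference is in how that claim is established, and this is where your argument has a genuine gap. The paper simply rewrites $\gamma(E)<1$ as $h^0(E)>\frac{d+n}{2}$ and quotes Re's strengthened Clifford theorem \cite{re} for $d\ge n$ (with \cite{bgn} disposing of $d<n$); it does not attempt to reprove that bound. Your substitute argument via the dual span does not close, for two reasons. First, the Clifford inequality $d\ge 2n+\gamma_1$ for $L=\det E$ is only available when $L$ contributes to $\gamma_1$, i.e.\ when $h^1(L)\ge2$; since $h^1(L)=g-1-d+h^0(L)$, this can fail as soon as $d>g+n-2$, which certainly happens in the allowed range $d\le n(g-1)$. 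The Serre-duality dodge does not help: when $h^1(L)\le1$ the bundle $K_C\otimes L^{-1}$ has at most one section and contributes nothing, and no Clifford-type bound on $h^0(L)$ is available from line-bundle theory alone. This is precisely the regime where one needs a genuinely higher-rank statement such as Re's.

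Second, and more seriously, your iteration for $k>(n+1)/2$ asks for auxiliary bundles $M=M_{V,E}$ of rank $j=\dim V-n$ that are semistable with $\mu(M)\le g-1$ and $h^0(M)\ge j+1$. Semistability of $M_{V,E}$ for general $V$ is Butler's conjecture, which is open and is listed as Problem \ref{pr9.6} in this paper; it is proved here only in special situations (Proposition \ref{prop5.1}, $\mu(E)\ge 2g$ in \cite{b}). Moreover $\mu(M_{V,E})=d/j>d/n=\mu(E)$, so even granting semistability the slope condition $\mu(M)\le g-1$ needed to invoke the inductive hypothesis can fail. The generation issue you flag (passing to the subsheaf generated by sections, or to Jordan--H\"older factors) is also left unresolved and changes the degrees entering your numerics. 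So the claim you need is true, but your proposed proof of it would have to be replaced by an appeal to \cite{re} and \cite{bgn} as in the paper, or by some other complete argument.
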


\begin{proof}
(a): By Lemma \ref{lem2.1} we have $\gamma_n \geq 0$. So the result for $\gamma_1 = 0$ follows by Lemma \ref{lemma2.2}. 
Suppose $\gamma_1 = 1$.
If $\gamma_n < 1$, then there exists a semistable bundle $E$ with $h^0(E) \geq n+1$ and degree $d \leq n(g-1)$ such that 
$$
d - 2(h^0(E) -n) < n.
$$
So 
$$
h^0(E) > \frac{d+n}{2}.
$$
If $d \geq n$, this contradicts \cite{re}. If $d < n$, then $h^0(E) < n$ by \cite{bgn}. So $\gamma_n \geq 1$ and hence 
$\gamma_n = \gamma_n' = \gamma_1$.

(b): The argument in the proof of (a) for $\gamma_1 = 1$ is valid also for $\gamma_1 \geq 2$.
\end{proof}

\begin{cor} \label{cor2.7}
If $\gamma_1 \geq 1$, then $\lim_{n \ra \infty} \gamma_n = 1$.
\end{cor}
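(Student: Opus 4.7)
The plan is to use the squeeze theorem on the bounds already established in Lemma \ref{lem2.1} and Proposition \ref{prop3.2}, combined with the easy case handled by Proposition \ref{prop3.2}(a).

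First I would dispose of the case $\gamma_1 = 1$: by Proposition \ref{prop3.2}(a), $\gamma_n = \gamma_1 = 1$ for every $n$, so the sequence is constantly $1$ and the limit is trivially $1$. So I may assume $\gamma_1 \geq 2$.

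For the lower bound, Proposition \ref{prop3.2}(b) immediately gives $\gamma_n \geq 1$ for all $n$. For the upper bound, I invoke Lemma \ref{lem2.1}, which yields
$$
\gamma_n \leq \frac{1}{n}\left(g - \left[\frac{g}{n+1}\right] + n - 2\right).
$$
For $n > g$ the floor term $\left[\frac{g}{n+1}\right]$ equals $0$, so the bound simplifies to
$$
\gamma_n \leq \frac{g+n-2}{n} = 1 + \frac{g-2}{n}.
$$
Combining the two bounds gives $1 \leq \gamma_n \leq 1 + \frac{g-2}{n}$ for all sufficiently large $n$, and the squeeze theorem delivers $\lim_{n \to \infty} \gamma_n = 1$.

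There is no real obstacle here — the corollary is essentially a book-keeping consequence of Lemma \ref{lem2.1} and Proposition \ref{prop3.2}. The only mild point is remembering to split off the $\gamma_1 = 1$ case separately, since otherwise Proposition \ref{prop3.2}(b) is not directly applicable.
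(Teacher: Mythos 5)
Your proof is correct and follows exactly the paper's own (one-line) argument, which cites precisely Proposition \ref{prop3.2} and Lemma \ref{lem2.1}; you have merely filled in the routine details of the squeeze. No issues.
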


\begin{proof}
This follows from Proposition \ref{prop3.2} and Lemma \ref{lem2.1}.
\end{proof}

\section{Mercat's conjecture}

We want to relate the invariants $\gamma_n'$ with Mercat's conjecture (see \cite{m}), which can be stated as follows:

\begin{conj} \label{conj3.1}
Let $E$ be a semistable vector bundle of rank $n$ and degree $d$.\\
\emph{(i)} If $\gamma_1 + 2 \leq \mu(E) \leq 2g - 4 - \gamma_1$, then $h^0(E) \leq \frac{d-\gamma_1 n}{2} + n$.\\
\emph{(ii)} If $1 \leq \mu(E) \leq \gamma_1 + 2$, then $h^0(E) \leq \frac{1}{\gamma_1 +1}(d-n) + n$.\\
\end{conj}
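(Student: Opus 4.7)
The plan is to prove both parts by induction on the rank $n$, using Serre duality to symmetrize the slope interval and the dual span construction to step the induction down. First, Lemma~\ref{lem1} together with Serre duality shows that $E \mapsto K_C \otimes E^*$ preserves $\gamma(E)$ and maps a bundle of slope $\mu$ to one of slope $2g-2-\mu$, so in (i) I may assume $\mu(E) \leq g-1$ without loss of generality. The base case $n=1$ of (i) is the very definition of $\gamma_1$, while the base case of (ii) is the elementary observation that any line bundle $L$ with $h^0(L) \geq 2$ has $\deg L \geq \gamma_1 + 2$, so for $d \leq \gamma_1 + 1$ one has $h^0(L) \leq 1$.

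For the induction step in (i), suppose $E$ is semistable of rank $n$ with $k := h^0(E) \geq n+1$ and $\mu(E) \leq g-1$. After reducing (with care about degrees) to the case where $E$ is globally generated, the dual span construction
\[
0 \to M_E \to H^0(E) \otimes \mathcal{O} \to E \to 0
\]
dualizes to an exact sequence $0 \to E^* \to H^0(E)^* \otimes \mathcal{O} \to M_E^* \to 0$. Since $\mu(E) > 0$, semistability forces $h^0(E^*) = 0$, and the long exact sequence gives $h^0(M_E^*) \geq k$. The bundle $M_E^*$ has rank $k-n$ and degree $d$. When $k < 2n$ its rank is strictly smaller than $n$, so---assuming $M_E^*$ is semistable, which is precisely Butler's conjecture---the inductive hypothesis gives $h^0(M_E^*) \leq \tfrac{1}{2}(d - \gamma_1(k-n)) + (k-n)$, and combined with $h^0(M_E^*) \geq k$ this yields $d \geq 2n + \gamma_1(k-n)$. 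Comparison with the desired inequality $d \geq 2(k-n) + \gamma_1 n$ shows they agree exactly at $k=2n$ and differ by $(\gamma_1-2)(2n-k)$ otherwise.

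The main obstacle is twofold. First, Butler's conjecture on the semistability of $M_E^*$ is open in general; without it one must decompose $M_E^*$ along its Harder--Narasimhan filtration and combine Clifford-type bounds factor by factor, which is delicate because sections of $M_E^*$ need not distribute along the filtration in a controlled way. Second, the naive bound $d \geq 2n + \gamma_1(k-n)$ matches the desired bound only when $\gamma_1 \leq 2$ (for $k < 2n$) or when $\gamma_1 \geq 2$ (for $k > 2n$), so closing the gap in the remaining range requires iterating the construction on $M_E^*$ itself, or choosing a subspace $V \subsetneq H^0(E)$ of intermediate dimension so that both the rank $\dim V - n$ and the slope $d/(\dim V - n)$ of the resulting $M_V$ land in a range where the inductive hypothesis produces the sharper estimate.

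For part (ii), a different tactic seems necessary: the slope bound $\mu(E) \leq \gamma_1 + 2$ restricts line subbundles of the semistable $E$ to degree $\leq \gamma_1 + 2$, and the conjectured inequality $h^0(E) - n \leq (d-n)/(\gamma_1+1)$ expresses that each section beyond a "free" first $n$ must cost at least $\gamma_1 + 1$ in degree. I would induct on $n$ by extracting a sub-line bundle $L \subset E$ of maximal $h^0(L)$, applying the Clifford bound to $L$, and passing to the rank $n-1$ quotient $E/L$. The technical hurdle here is that $E/L$ generically loses semistability, so one must either track the argument through the Jordan--Hölder factors of $E/L$ or prove a cohomological lemma for extensions of bundles of small slope; combined with Butler's conjecture, this seems to me the most plausible route.
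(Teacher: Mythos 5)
This statement is Mercat's Conjecture, and the paper does not prove it --- it is stated as an open conjecture, related to the invariants $\gamma_n'$ via Proposition \ref{prop3.3} and verified only in special cases (e.g.\ Corollary \ref{cor4.16} for degree $\le d_n$ under hypotheses on the gonality sequence, Proposition \ref{prop5.6} for rank $2$ under extra hypotheses, and Proposition \ref{prop3.6} for $\gamma_1\le4$). The paper explicitly remarks that if the conjecture is true, ``a complete proof seems a long way off.'' So there is no proof in the paper to compare against, and your proposal should be judged on whether it actually establishes the statement. It does not.

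Concretely, your argument has three gaps that you partly acknowledge but do not close. First, the induction step rests on the semistability of $M_{V,E}$, which is Butler's conjecture and is open; replacing it by a Harder--Narasimhan analysis is not carried out, and there is no control on how the $k$ sections of $M_E^*$ distribute among the filtration quotients. Second, even granting semistability, the bound you derive, $d \geq 2n + \gamma_1(k-n)$, differs from the required $d \geq 2(k-n)+\gamma_1 n$ by $(2-\gamma_1)(2n-k)$, which has the wrong sign precisely in the main range of interest ($\gamma_1>2$, $n<k<2n$); the suggested fixes (iterating on $M_E^*$, or choosing an intermediate $V$) are not executed and it is not clear they can succeed, since the rank of $M_{V,E}$ only decreases when $\dim V<2n$, which is exactly the problematic range. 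Third, to apply the inductive hypothesis to $M_E^*$ you must verify that its slope $d/(k-n)$ lies in the window $[\gamma_1+2,\,2g-4-\gamma_1]$; when $k-n$ is small this slope can exceed $2g-4-\gamma_1$, and neither part of the conjecture then applies. The sketch for part (ii) has the same character: extracting a line subbundle and passing to $E/L$ loses semistability, and no mechanism is given for recovering the required bound on the quotient. In short, this is a strategy outline with genuine obstructions, not a proof, and it is consistent with the paper's view that the conjecture remains open.
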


\begin{lem} \label{lem2}
Conjecture \ref{conj3.1} (i) is equivalent to 

\emph{(i$'$)} If $\gamma_1 + 2 \leq \mu(E) \leq 2g - 4 - \gamma_1$, then $\gamma(E) \geq \gamma_1$.
\end{lem}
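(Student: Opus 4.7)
The plan is to observe that this is a purely algebraic reformulation, obtained by rearranging the inequality bounding $h^0(E)$ into an inequality bounding $\gamma(E)$. There is no real content to prove beyond unwinding the definition of $\gamma(E)$.

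First I would start from the definition
$$
\gamma(E) = \mu(E) - \frac{2h^0(E)}{n} + 2
$$
and solve for $h^0(E)$, obtaining
$$
h^0(E) = \frac{n}{2}\bigl(\mu(E) + 2 - \gamma(E)\bigr) = \frac{d - \gamma(E)\, n}{2} + n.
$$
Then the inequality $h^0(E) \leq \frac{d - \gamma_1 n}{2} + n$ that appears in Conjecture \ref{conj3.1}(i) is equivalent, after multiplying through by $2/n$ and rearranging, to $\gamma(E) \geq \gamma_1$. Since this equivalence is an identity valid for every bundle $E$ of rank $n$ and degree $d$ with $n \geq 1$, it holds in particular under the slope hypothesis $\gamma_1 + 2 \leq \mu(E) \leq 2g - 4 - \gamma_1$, which is shared by both (i) and (i$'$).

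There is no serious obstacle: the proof is a one-line computation. The only thing worth noting is that the hypothesis $\mu(E) \leq g-1$ appearing implicitly in the definition of $\gamma_n$ plays no role here, since the Conjecture itself imposes the stronger bound $\mu(E) \leq 2g - 4 - \gamma_1$, and the equivalence is established pointwise for each $E$ in the hypothesised slope range.
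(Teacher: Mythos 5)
Your proposal is correct and matches the paper's own proof, which likewise substitutes the bound $h^0(E) \leq \frac{d-\gamma_1 n}{2}+n$ into $\gamma(E)=\mu(E)-2\frac{h^0(E)}{n}+2$ and observes that the computation reverses. The rearrangement you perform is exactly the same one-line algebraic identity.
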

\begin{proof} Suppose (i) holds. Then
$$
\gamma(E) = \mu(E) -2\frac{h^0(E)}{n} + 2 \geq \mu(E) - 2\frac{\frac{d-\gamma_1 n}{2} +n}{n} + 2 = \gamma_1.
$$
The converse implication follows by the same computation. 
\end{proof}

\begin{prop} \label{prop3.3}
Let $n \geq 2$ be an integer.\\
\emph{(a):} Conjecture \ref{conj3.1} implies the equality $\gamma_n' = \gamma_1$.\\
\emph{(b):} The equality $\gamma_n' = \gamma_1$ implies conjecture \ref{conj3.1} (i).
\end{prop}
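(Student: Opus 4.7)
The plan is to prove both directions by reducing to Lemma \ref{lem2} (the reformulation (i$'$) of Conjecture \ref{conj3.1} (i)), using Lemma \ref{lemma2.2} for one inequality and Serre duality (Lemma \ref{lem1}) to symmetrise the range of slopes.

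For part (a), Lemma \ref{lemma2.2} applied with $p=1$ yields $\gamma_n' \leq \gamma_1' = \gamma_1$ immediately, so only the reverse inequality requires proof. Let $E$ be any semistable bundle contributing to $\gamma_n'$, so that $h^0(E) \geq 2n$ and $\mu(E) \leq g-1$; the Clifford-type bound $h^0(E) \leq d/2 + n$ from \cite[Theorem~2.1]{bgn} forces $\mu(E) \geq 2$. I then split on the position of $\mu(E)$ relative to $\gamma_1+2$. In the range $\gamma_1+2 \leq \mu(E) \leq g-1$, one checks $g-1 \leq 2g-4-\gamma_1$ using $\gamma_1 \leq \lfloor(g-1)/2\rfloor \leq g-3$, and Lemma \ref{lem2} then gives $\gamma(E) \geq \gamma_1$. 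In the remaining range $2 \leq \mu(E) < \gamma_1+2$ (which forces $\gamma_1 \geq 1$), Conjecture \ref{conj3.1} (ii) applies; combining $h^0(E) \leq (d-n)/(\gamma_1+1) + n$ with the hypothesis $h^0(E) \geq 2n$ forces $d \geq n(\gamma_1+2)$, contradicting $\mu(E) < \gamma_1+2$. Thus this subcase is vacuous, and $\gamma(E) \geq \gamma_1$ in all cases.

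For part (b), suppose $\gamma_n' = \gamma_1$ and let $E$ be semistable of rank $n$ with $\gamma_1+2 \leq \mu(E) \leq 2g-4-\gamma_1$. By Lemma \ref{lem1}, replacing $E$ with $K_C \otimes E^*$ reverses the slope about $g-1$ while preserving $\gamma(E)$, so I may assume $\mu(E) \leq g-1$. Arguing by contradiction, suppose $\gamma(E) < \gamma_1$. Then rearranging $\gamma(E) = \mu(E) - 2h^0(E)/n + 2 < \gamma_1$ yields $h^0(E)/n > (\mu(E) + 2 - \gamma_1)/2 \geq ((\gamma_1+2) + 2 - \gamma_1)/2 = 2$, so $h^0(E) \geq 2n+1$ and $E$ contributes to $\gamma_n'$. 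But then $\gamma(E) \geq \gamma_n' = \gamma_1$, a contradiction. Hence $\gamma(E) \geq \gamma_1$, which by Lemma \ref{lem2} is equivalent to the desired bound in Conjecture \ref{conj3.1} (i).

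The argument is mostly bookkeeping and no single step presents real difficulty; the only care needed is to confirm that $g-1$ lies inside the conjectural range $[\gamma_1+2,\, 2g-4-\gamma_1]$ in part (a), and to note that the low-slope subcase $\mu(E) < \gamma_1+2$ is rendered vacuous by combining (ii) with $h^0(E) \geq 2n$.
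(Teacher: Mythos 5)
Your argument is correct and follows essentially the same route as the paper: in (a) you use Lemma \ref{lemma2.2} for one inequality, invoke Lemma \ref{lem2} when $\mu(E)\geq\gamma_1+2$, and show the low-slope case is incompatible with $h^0(E)\geq 2n$ via part (ii); in (b) you use Lemma \ref{lem1} to reduce to $\mu(E)\leq g-1$ and then split on whether $E$ contributes to $\gamma_n'$. Your explicit checks that $g-1\leq 2g-4-\gamma_1$ and that $\mu(E)\geq 2$ (via the Clifford bound) are slightly more careful than the paper's write-up but do not change the argument.
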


\begin{proof}
(a) Assume Conjecture \ref{conj3.1} holds and suppose $E$ contributes to $\gamma_n'$. According to Lemma \ref{lemma2.2}
we have to show that $\gamma(E) \geq \gamma_1$.

If $\mu(E) \geq \gamma_1 + 2$, Lemma \ref{lem2} implies the assertion. So suppose $1 \leq \mu(E) < \gamma_1 +2$.
Then by (ii),
$$
h^0(E) \leq \frac{1}{\gamma_1 + 1}(d-n) + n < \frac{1}{\gamma_1 + 1}\big( n(\gamma_1 + 2) -n\big) + n = 2n,
$$
a contradiction.

(b) Assume that $\gamma_n' = \gamma_1$ and consider a semistable vector bundle 
$E$ of rank $n$ with $\gamma_1 + 2 \leq  \mu(E) \leq 2g-4 -\gamma_1$. By Lemma \ref{lem2} we have to show that
$\gamma(E) \geq \gamma_1$. In view of Lemma \ref{lem1} we can assume that $\mu(E) \leq g-1$.

If $h^0(E) \geq 2n$, then $\gamma(E) \geq \gamma_n' = \gamma_1$ by assumption. 
If $h^0(E) < 2n$, then 
$$
\gamma(E) = \mu(E) - 2\frac{h^0(E)}{n} + 2 > \gamma_1 + 2 - 2\frac{2n}{n} + 2 = \gamma_1.
$$
\end{proof}

\begin{rem}
\emph{ If Conjecture \ref{conj3.1} (ii) holds and $1 \leq \mu(E) < \gamma_1 + 2$, then $h^0(E) < 2n$. So $E$ does not contribute to $\gamma_n'$}.
\end{rem}

\begin{prop} \label{prop3.7}
If $\gamma_1 \geq 2$, then $\gamma_n' \geq 2$ for all $n$. 
\end{prop}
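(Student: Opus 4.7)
Plan.

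The plan is to argue by contradiction: assume there exists a semistable $E$ of rank $n$ contributing to $\gamma_n'$ with $\gamma(E)<2$, and derive a contradiction with $\gamma_1\ge 2$. So $E$ is semistable of rank $n$, $\mu(E)\le g-1$, $h^0(E)\ge 2n$, and $\gamma(E)<2$, the last condition being equivalent to $h^0(E)>d/2$ where $d=\deg E$.

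The first step is a clean numerical reduction, exactly parallel to the proof of Proposition~\ref{prop3.2}. Apply Clifford's theorem for semistable bundles \cite[Theorem~2.1]{bgn}, namely $h^0(E)\le d/2+n$; combined with $h^0(E)\ge 2n$ this gives $d\ge 2n$, so in particular $d\ge n$ and Re's theorem \cite{re} (which needs only $\gamma_1\ge 1$, implied by $\gamma_1\ge 2$) yields $h^0(E)\le (d+n)/2$. Combined again with $h^0(E)\ge 2n$, this forces $d\ge 3n$, i.e., $\mu(E)\ge 3$. Thus $\gamma(E)\in[1,2)$ and $h^0(E)$ is pinched into the narrow interval $(d/2,(d+n)/2]$.

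The second step is structural: I would use this pinching together with $h^0(E)\ge 2n$ to exhibit a line subbundle $L\subset E$ with $h^0(L)\ge 2$, $\deg L\le g-1$, and $\gamma(L)<2$, directly contradicting $\gamma_1\ge 2$. Concretely, because $h^0(E)\ge 2n\ge 4$, one may choose a $2$-dimensional subspace $V\subset H^0(E)$ and consider the evaluation $V\otimes\mathcal{O}\to E$: its image is either a rank-$1$ subsheaf (whose saturation is an $L$ with $h^0(L)\ge 2$) or generically of rank $2$. Semistability of $E$ gives $\deg L\le\mu(E)\le g-1$ in the first alternative, so $L$ contributes to $\gamma_1$, and the numerics from Step~1 should force $\gamma(L)\le 1$. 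For the second alternative, one would iterate on the rank-$2$ subsheaf or invoke the dual span construction of \eqref{eq0.2}; the abundance of pencils (forced by $h^0(E)\ge 2n$) should guarantee that the first alternative always occurs for a suitable $V$.

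The main obstacle is the passage from the Re bound to the sharper Mercat-type bound $h^0(E)\le d/2$ under the single extra hypothesis $h^0(E)\ge 2n$: this is a weakening of Conjecture~\ref{conj3.1}(i) for $\gamma_1=2$, so it must be provable directly, presumably by a careful extremal-case analysis of Re's theorem (whose equality case requires a pencil on $C$ of Clifford index $\le 1$, which is excluded by $\gamma_1\ge 2$) combined with partial results of Mercat from \cite{m}. Pinning down the $L$ in all cases, rather than only in the equality case of Re, is where the real work lies.
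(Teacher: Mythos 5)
There is a genuine gap. The entire content of this proposition is the passage from the Re bound $h^0(E)\le\frac{d+n}{2}$ to the Mercat-type bound $h^0(E)\le\frac{d}{2}$, and you explicitly defer this (``it must be provable directly, presumably by a careful extremal-case analysis\dots this is where the real work lies''). The paper does not reprove it: it quotes \cite[Theorem 1]{m}, which is a \emph{proved} theorem of Mercat (not Conjecture \ref{conj3.1}). Concretely, if $\mu(E)<2+\frac{2}{g-4}$, then \cite[Theorem 1 (ii)]{m} gives $h^0(E)\le n+\frac{1}{g-2}(d-n)<n\left(1+\frac{1}{g-4}\right)\le 2n$, so $E$ does not contribute to $\gamma_n'$ at all; and if $\mu(E)\ge 2+\frac{2}{g-4}$, then \cite[Theorem 1 (i)]{m} gives $\gamma(E)\ge 2$ outright. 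Your Step 1 is a correct but much weaker reduction (it only shows $d\ge 3n$, which for $g=5$ does not even reach the threshold $2+\frac{2}{g-4}=4$), and it cannot substitute for Mercat's theorem.

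Your Step 2 also contains a logical error. If you produce a line subbundle $L\subset E$ with $h^0(L)\ge 2$ and $\deg L\le\mu(E)\le g-1$, then $L$ contributes to $\gamma_1$ and therefore $\gamma(L)\ge\gamma_1\ge 2$; this is the \emph{opposite} of the inequality $\gamma(L)<2$ you hope to extract, so no contradiction arises from the mere existence of such an $L$. (To exploit such a subbundle one must also control $h^0$ of the quotient $E/L$, which is exactly the delicate lifting problem discussed in Remark \ref{rem5.7} and carried out only for small ranks in Sections 5--7.) Moreover, the alternative in which no $2$-dimensional $V$ has rank-one image --- i.e.\ $E$ has no line subbundle with $h^0\ge 2$ --- genuinely occurs and is not dispatched by ``abundance of pencils''; handling it requires the Paranjape--Ramanan lemma (Lemma \ref{lempr}) and information about the gonality sequence, and even then yields only rank-by-rank results, not the uniform bound for all $n$ that the proposition asserts.
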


Note that $\gamma_1 \geq 2$ implies $g \geq 5$.

\begin{proof}
We use \cite[Theorem 1]{m}. Let $E$ be a semistable bundle of rank $n$ and degree $d$. If $d < (2 + \frac{2}{g-4})n$, then by 
\cite[Theorem 1 (ii)]{m} we have 
\begin{eqnarray*}
h^0(E) & \leq & \frac{1}{g-2}(d-n) + n\\
& < & \frac{1}{g-2} \left( n + \frac{2n}{g-4} \right) + n\\
& = & n \left(1 + \frac{1}{g-4} \right) \leq 2n. 
\end{eqnarray*}
So $E$ does not contribute to $\gamma_n'$. Now \cite[Theorem 1 (i)]{m} implies $\gamma_n' \geq 2$.
\end{proof}

We can use Mercat's results of \cite{m1}, \cite{m2} and \cite{m} to obtain the following theorem.

\begin{theorem} \label{thm3.5}
Let $C$ be a curve with Clifford index $\gamma_1 \geq 2$.\\
\emph{(a):} If $n > g$, then
$$
\gamma_n = 1 + \frac{g-2}{n};
$$
\emph{(b):} If $n = g$, then 
$$
\gamma_n \left\{ \begin{array}{cccc}
                 = & 2 - \frac{2}{g} & for & g \geq 6,\\
                 \geq & \frac{7}{5} & for & g = 5;
                 \end{array} \right.
$$                 
\emph{(c):} If $n = g-1$, then 
$$
\gamma_n = 2 - \frac{2}{g-1};
$$
\emph{(d):} If $g-3 \leq n \leq g-2$, then 
$$
\gamma_n \geq 2 - \frac{1}{n};
$$
\emph{(e):} If $n \leq g-4$, then 
$$
\gamma_n \geq 2.
$$
\end{theorem}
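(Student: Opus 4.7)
My plan is to handle the upper bounds in (a), (b), (c) by direct construction and the lower bounds throughout by applying Mercat's refined Clifford-type inequalities from \cite{m}, \cite{m1}, \cite{m2}, after first reducing the range of $h^0(E)$ via Proposition \ref{prop3.7}.

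For the upper bounds, I would simply apply Lemma \ref{lem2.1}. In the regime $n > g$ one has $\left[ g/(n+1) \right] = 0$, so Lemma \ref{lem2.1} immediately yields $\gamma_n \leq (g + n - 2)/n = 1 + (g-2)/n$; for $n = g$ (with $g \geq 6$) one again has $\left[ g/(n+1) \right] = 0$, giving $\gamma_g \leq (2g - 2)/g = 2 - 2/g$; and for $n = g-1$ one has $\left[ g/(n+1) \right] = 1$, yielding $\gamma_{g-1} \leq (2g - 4)/(g-1) = 2 - 2/(g-1)$. This dispatches the inequality half of (a), (b) and (c).

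For the lower bounds I would first observe that by Proposition \ref{prop3.7} we have $\gamma_n' \geq 2$ under the hypothesis $\gamma_1 \geq 2$, so any bundle $E$ with $h^0(E) \geq 2n$ automatically satisfies $\gamma(E) \geq 2$, which already dominates every claimed lower bound. Hence it suffices to analyse bundles $E$ contributing to $\gamma_n$ with $n+1 \leq h^0(E) < 2n$. For these, I would split on the size of $\mu(E)$ relative to $\gamma_1 + 2 \geq 4$. When $\mu(E) \geq \gamma_1 + 2$, Mercat's large-slope bound (\cite{m}, Theorem~1~(i); compare Lemma \ref{lem2}) gives $\gamma(E) \geq \gamma_1 \geq 2$. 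When $1 \leq \mu(E) < \gamma_1 + 2$, Mercat's small-slope bound from \cite{m,m1,m2}, namely $h^0(E) \leq (d-n)/(g-2) + n$, translates into $\gamma(E) \geq \mu(E) (g-4)/(g-2) + 2/(g-2)$, and minimising over the admissible integer slopes (ruling out $\mu(E) < 1$ via \cite{bgn}) produces the matching lower bounds in parts (a) and (c). For (d) and (e), the sharper version of the Clifford-type inequality available in the rank range $n \leq g-2$, again from \cite{m1,m2}, strengthens the constant sufficiently to yield $2 - 1/n$ and $2$ respectively, completing these cases.

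The chief obstacle is extracting the exact numerical constants in (a) and (b) from Mercat's inequalities: the intermediate bound $\mu(E)(g-4)/(g-2) + 2/(g-2)$ is not sharp at either end of the $\mu$-range, so one must combine it with the Serre-dual symmetry $\gamma(E) = \gamma(K_C \otimes E^*)$ of Lemma \ref{lem1} to cover both halves of the slope interval $1 \leq \mu(E) \leq 2g - 3$ and match the claimed value $1 + (g-2)/n$. The anomalous subcase $g = 5$, $n = g$ in (b) is particularly delicate, since the factor $(g-4)/(g-2)$ collapses to $1/3$ and Mercat's small-slope inequality loses strength; the best one can extract there is $\gamma_5 \geq 7/5$ rather than the $2 - 2/g = 8/5$ available for $g \geq 6$, which accounts for the bifurcation in the statement of (b).
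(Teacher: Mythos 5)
Your upper bounds via Lemma \ref{lem2.1} are fine and give exactly the constructions the paper uses (degrees $n+g$, $2g$, $2g-2$ for $n>g$, $n=g$, $n=g-1$), but the lower-bound half of your argument has a genuine gap. You propose to cover the whole small-slope region with the single inequality $h^0(E)\leq n+\frac{d-n}{g-2}$, giving $\gamma(E)\geq \mu\frac{g-4}{g-2}+\frac{2}{g-2}$. This is not strong enough to recover the exact constants. Concretely, for $n>g$ and $d=n+g$ it yields $\gamma(E)\geq 1+\frac{g(g-4)}{n(g-2)}$, which is strictly less than the claimed $1+\frac{g-2}{n}$; and even after exploiting integrality of $h^0$ the bound fails at nearby degrees (for $g=5$, $n>5$, $d=n+6$ it only gives $h^0\leq n+2$, hence $\gamma(E)\geq 1+\frac{2}{n}<1+\frac{3}{n}$). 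The paper avoids this by splitting the low-slope range more finely and using the \emph{sharper} bounds available there: $h^0(E)\leq n+\frac{1}{g}(d-n)$ from \cite{m1} for $1<\mu<2$, the bound $h^0(E)\leq n+[\frac{n}{g-1}]$ from \cite{m2} at $\mu=2$, and the $\frac{1}{g-2}$ bound with its floor function only for $2<\mu<2+\frac{2}{g-4}$. Your proposed remedy --- combining with Serre duality via Lemma \ref{lem1} --- cannot close this gap: duality sends slope $\mu$ to $2g-2-\mu$, so the problematic slopes near $1$ go to slopes near $2g-3$, outside the range $\mu\leq g-1$ in the definition of $\gamma_n$, and in any case $\gamma(K_C\otimes E^*)=\gamma(E)$, so no new numerical information is produced.

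Two further points. First, you attribute the bound $\gamma(E)\geq\gamma_1$ for $\mu(E)\geq\gamma_1+2$ to \cite[Theorem 1 (i)]{m}; that statement is Mercat's \emph{conjecture} (Conjecture \ref{conj3.1} (i)), whereas the theorem only gives $\gamma(E)\geq 2$ for $\mu\geq 2+\frac{2}{g-4}$ --- which happens to suffice here, but your threshold $\gamma_1+2$ then leaves the intermediate range $2+\frac{2}{g-4}\leq\mu<\gamma_1+2$ assigned to a small-slope estimate that is not proved there. Second, parts (d) and (e) are asserted rather than proved: there is no ``sharper version of the Clifford-type inequality in the rank range $n\leq g-2$'' in \cite{m1,m2}. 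The actual argument is that one lists the candidate minimizing degrees ($d=n+g$, $d=2n$, $d=2n+1$, $d=2n+2$, $d=n+g-2$) and checks which of them fall inside the admissible slope windows for the given $n$; for $n\leq g-4$ none does, so only the case $\mu\geq 2+\frac{2}{g-4}$ survives and $\gamma_n\geq 2$, while for $n=g-2,g-3$ the degree $2n+1$ survives and the floor $[\frac{n+1}{g-2}]=1$ produces exactly $2-\frac{1}{n}$. This combinatorial verification is the substance of (d) and (e) and is missing from your outline.
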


\begin{proof} 
Let $E$ be a semistable bundle of rank $n$ and degree $d$. So $ \mu = \mu(E) = \frac{d}{n}$. We consider 4 cases:

{\it Case 1}: $1 < \mu < 2$: By \cite{m1}, $h^0(E) \leq n + \frac{1}{g}(d-n)$ and so
\begin{equation*} 
 \gamma(E) \geq \frac{1}{n}\left(d - \frac{2}{g}(d-n)\right).
\end{equation*}

{\it Case 2}: $\mu = 2$: By \cite{m2}, $h^0(E) \leq n+[\frac{n}{g-1}]$ and so 
\begin{equation*} 
\gamma(E) \geq \frac{1}{n} \left(2n - 2 \left[\frac{n}{g-1} \right]  \right).
\end{equation*}

{\it Case 3}: $2 < \mu <2+ \frac{2}{g-4}$: By \cite[Theorem 1 (ii)]{m}, $h^0(E) \leq n + \left[\frac{1}{g-2}(d-n) \right]$ and so
\begin{equation} \label{eq3.1}
\gamma(E) \geq \frac{1}{n}\left( d - 2 \left[ \frac{d-n}{g-2}\right] \right).
\end{equation}

{\it Case 4}: $2 + \frac{2}{g-4} \leq \mu$: By \cite[Theorem 1 (i)]{m}, 
 \begin{equation} \label{eq3.2}
\gamma(E) \geq 2.
 \end{equation}

In Case 1 the right hand side is an increasing function of $d$. 
So we need to look for the smallest $d$ in the given range for which a bundle $E$ exists with $h^0(E) \geq n+1$. 
We must have $d = n+g$ and this is in the required range if $n>g$ and then for such $E$,
\begin{equation} \label{eq3.3}
\gamma(E) = 1 + \frac{g-2}{n} < 2.
\end{equation}
When $n > g$, such $E$ always exists (see \cite{m1}). 

By \cite{m2}, Case 2 always occurs provided $n \geq g-1$ and gives us bundles $E$ with
\begin{equation}  \label{eq3.4} 
\gamma(E) = 2 - \frac{2}{n} \left[ \frac{n}{g-1} \right] < 2.
\end{equation}
It remains to deal with Case 3. The smallest value of the right hand side of \eqref{eq3.1} is 
given by one of the following three possibilities:
\begin{itemize}
\item $d = 2n+1$ and $n \geq g-3$,
\item $d = 2n+2$ and $ g-2 \; \mbox{divides} \; n+2$,
\item $d = n + g - 2$.
\end{itemize}
If none of these possibilities occurs within the range $2 < \mu < 2 + \frac{2}{g-4}$, then Case 3 does not arise. 

For $d = 2n+1$ we get
\begin{equation} \label{eq3.5}
\gamma(E) \geq 2 - \frac{1}{n} \left( 2 \left[ \frac{n+1}{g-2} \right] - 1 \right)
\end{equation}
and we require $2n+1 < n(2 + \frac{2}{g-4} )$, i.e. $n > \frac{g-4}{2}$ which is true since $n \geq g-3$.

For $d = 2n+2$ we get
\begin{equation} \label{eq3.6}
\gamma(E) \geq 2 - \frac{2}{n} \left( \frac{n+2}{g-2} - 1 \right). 
\end{equation}
and we require that $g-2 \; \mbox{divides} \;n+2$ and $2n+2 < n(2 + \frac{2}{g-4} )$, i.e. $n > g-4$.

For $d = n + g-2$ we get
\begin{equation} \label{eq3.7}
\gamma(E) \geq 1 + \frac{g-4}{n}
\end{equation}
and we require $2n < n+g-2 < 2n + \frac{2n}{g-4}$. i.e. $n = g-3$. In this case \eqref{eq3.7} gives the same inequality as 
\eqref{eq3.5} and hence can be ignored.

If $n > g$, the right hand side of \eqref{eq3.3} is less than or equal to the right hand sides of \eqref{eq3.4}, 
\eqref{eq3.5} and \eqref{eq3.6}.
So for $n > g$ we obtain
$$
\gamma_n \geq \min \left\{2, 1 + \frac{g-2}{n} \right\} = 1 + \frac{g-2}{n}
$$
and this can be attained by a bundle $E$ of degree $n+g$ with $h^0(E) = n+1$.

For $n= g \geq 6$ we get from \eqref{eq3.2}, \eqref{eq3.4}, \eqref{eq3.5} and \eqref{eq3.6}, 
$$
\gamma_n \geq \min \left\{2, 2 - \frac{2}{g}, 2 - \frac{1}{g} \right\} = 2 - \frac{2}{g}
$$ 
and this can be attained by a bundle $E$ of degree $2g$ with $h^0(E) = g+1$. For $n=g=5$ we get
$$
\gamma_5 \geq \min \left\{ 2,2-\frac{2}{5}, 2 - \frac{3}{5} \right\} = \frac{7}{5}.
$$
For $n=g-1$ we get from \eqref{eq3.2}, \eqref{eq3.4}, \eqref{eq3.5} and \eqref{eq3.6},
$$
\gamma_{g-1} \geq \min \left\{2, 2 - \frac{2}{g-1}, 2 - \frac{1}{g-1} \right\} = 2 - \frac{2}{g-1}
$$
and the bound is attained by a bundle of degree $2g-2$ with $h^0(E) = g$. (In fact, the unique 
such semistable bundle is the dual span of the canonical bundle $K_C$ \cite[Theorem 1]{m2}). 

For $n = g-2$ or $g-3$ we get from \eqref{eq3.2} and \eqref{eq3.5},
$$
\gamma_n \geq 2 - \frac{1}{n} \left( 2 \left[ \frac{n+1}{g-2} \right] -1 \right) = 2 - \frac{1}{n}.
$$
For $n \leq g-4$ none of the inequalities \eqref{eq3.3} to \eqref{eq3.6} applies. So there is no semistable $E$ of rank $n \leq g-4$ 
with $\mu(E) < 2 + \frac{2}{g-4}$ and $h^0(E) \geq n+1$. Hence
$$
\gamma(E) \geq 2
$$
by \eqref{eq3.2}.
\end{proof}

\begin{rem}
\emph{Note that Theorem \ref{thm3.5} (a) gives a more precise version of Corollary \ref{cor2.7}.}
\end{rem}

\begin{prop} \label{prop3.6}
If $\gamma_1 \geq 3$, then 
$$
\gamma_2 \geq \min \left\{ \gamma_1, \frac{\gamma_1}{2} + 1 \right\} \quad \mbox{and} \quad \gamma_2' \geq \min \left\{ \gamma_1, \frac{\gamma_1}{2} + 2 \right\}.
$$
In particular, $\gamma_2' = \gamma_1$ for $\gamma_1 \leq 4$.
\end{prop}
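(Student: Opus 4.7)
My plan is to prove both inequalities by a case analysis on the line subbundles of a contributing semistable rank-$2$ bundle~$E$. Writing $d=\deg E$ and letting $s$ denote the required lower bound on $h^0(E)$ (so $s=3$ for $\gamma_2$ and $s=4$ for $\gamma_2'$), the goal is to establish $\gamma(E)\ge\min\{\gamma_1,\gamma_1/2+(s-2)\}$. I would begin by picking a nonzero section $\sigma\in H^0(E)$ and letting $M\subset E$ be its saturation, fitting in a short exact sequence $0\to M\to E\to N\to 0$ with $M,N$ line bundles. Set $m=\deg M$, $n=\deg N$, $a=h^0(M)\ge 1$, $b=h^0(N)$. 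Semistability gives $m\le d/2\le g-1$, and the long exact sequence in cohomology yields $h^0(E)=a+b-r$, where $r$ denotes the dimension of the image of the connecting map $\partial\colon H^0(N)\to H^1(M)$.

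If some choice of $M$ has $a\ge 2$, then since $m\le g-1$ Riemann-Roch forces $h^1(M)\ge 2$, so $M$ contributes to the classical Clifford index and $m\ge 2(a-1)+\gamma_1$. The analogous bound applies to $N$ when $n\le g-1$ and $b\ge 2$; the case $n>g-1$ can be handled by invoking Lemma~\ref{lem1} (passing to the Serre dual if necessary). Summing these inequalities yields $h^0(E)\le a+b\le d/2+2-\gamma_1$, hence $\gamma(E)\ge\gamma_1$.

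If instead every line subbundle of $E$ has $h^0\le 1$, then $a=1$ and $b-r\ge s-1\ge 2$. Assuming $n\le g-1$ (the case $n>g-1$ follows symmetrically via $\gamma(E)=\gamma(K\otimes E^*)$ from Lemma~\ref{lem1}), Clifford applied to $N$ gives $b\le(n+2-\gamma_1)/2$, and a direct computation then yields
\[
\gamma(E)\ge\frac{m+\gamma_1}{2}+r.
\]
It then suffices to prove $m/2+r\ge s-2$. For this I would combine two inputs. First, applying Mercat's Theorem~1 of \cite{m} exactly as in the proof of Proposition~\ref{prop3.7} shows that $E$ must have $\mu(E)\ge 2+2/(g-4)$ (since $\gamma_1\ge 3$ forces $g\ge 7$) and gives $\gamma(E)\ge 2$, so that $m+2r\ge 4-\gamma_1$. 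Second, I would apply the base-point-free pencil trick to a base-point-free $2$-dimensional subpencil $V\subset H^0(N)$: such a $V$ exists whenever $N$ computes $\gamma_1$, for otherwise removing base points would produce a line bundle with smaller Clifford invariant, contradicting the definition of $\gamma_1$. If $r=0$, then the extension class $\xi\in H^1(M\otimes N^{-1})$ must annihilate the image of the multiplication map $V\otimes H^0(K\otimes M^{-1})\to H^0(K\otimes M^{-1}\otimes N)$, a subspace of codimension at most $b-2$ in the target. Since semistability of $E$ forces $\xi\ne 0$ (otherwise $N$ would be a destabilizing quotient), a short sub-case analysis on the possible values $(m,r)$ yields $r\ge\max(0,s-2-m/2)$, closing the gap.

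The ``in particular'' claim follows at once: for $\gamma_1\le 4$ one has $\gamma_1/2+2\ge\gamma_1$, so the main inequality gives $\gamma_2'\ge\gamma_1$, while Lemma~\ref{lemma2.2} gives the reverse $\gamma_2'\le\gamma_1'=\gamma_1$, so $\gamma_2'=\gamma_1$. I expect the main obstacle to be the second input above: rigorously ruling out semistable configurations with small $m$ and $r=0$, especially for $s=4$, where the required bound $m/2+r\ge 2$ leaves several sub-cases to dispatch. This is likely to require a careful use of semistability beyond the naive constraint $m\le d/2$, together with the non-triviality of $\xi$ and a refined analysis of the relevant multiplication map.
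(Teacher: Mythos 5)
Your proof has a genuine gap in the second case, and it is precisely the gap you yourself flag. The branch in which $E$ has a line subbundle with $h^0\ge 2$ can be made to work (modulo some glossed-over subcases: $b\le 1$, and $n>g-1$ with $h^1(N)\le 1$, both of which are repairable as in the proof of Theorem \ref{prop4.8}). But the other branch does not close. The reduction to the inequality $\tfrac{m}{2}+r\ge s-2$ is fine, yet neither of your two proposed inputs establishes it. First, the step ``Mercat gives $\gamma(E)\ge 2$, so that $m+2r\ge 4-\gamma_1$'' is a non sequitur: $\tfrac{m+\gamma_1}{2}+r$ is a \emph{lower} bound for $\gamma(E)$, so a lower bound on $\gamma(E)$ gives no information about $m+2r$ (and even if it did, $4-\gamma_1\le 1$ for $\gamma_1\ge 3$, which is too weak even for the $s=3$ case). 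Second, the extension-theoretic input is both incomplete and partly incorrect: semistability does \emph{not} force $\xi\ne 0$ when $\deg M=\deg N=d/2$ (the split bundle $M\oplus N$ is then semistable); the existence of a base-point-free pencil in $H^0(N)$ is only justified by your argument when $N$ \emph{computes} $\gamma_1$, which is not given; and the decisive claim $r\ge\max(0,\,s-2-m/2)$ is simply asserted, with the hard subcases (small $m$, $r=0$, $s=4$) left open. As written, the proposition is not proved.

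For comparison, the paper's proof is entirely different and much shorter: it quotes Mercat's Corollary 3 (for $3\gamma_1-1\le d\le 2g-2$, giving $\gamma(E)\ge\gamma_1$ at once) and Mercat's Lemma 5 (for $d\le 3\gamma_1-2$, giving $h^0(E)\le\frac{d-\gamma_1}{4}+2$, hence $\gamma(E)\ge\frac{d+\gamma_1}{4}$), and then observes that contributing to $\gamma_2$ (resp.\ $\gamma_2'$) forces $d\ge\gamma_1+4$ (resp.\ $d\ge\gamma_1+8$). If you want a self-contained argument along your lines, the right tool for your second case is not the extension class but Lemma \ref{lempr} (Paranjape--Ramanan): if $E$ has no line subbundle with $h^0\ge 2$ and $h^0(E)=2+s'$, then $\deg E\ge d_{2s'}$, and combining this with Lemma \ref{lem4.4} yields the required bounds $\tfrac{\gamma_1}{2}+1$ and $\tfrac{\gamma_1}{2}+2$ (indeed this is exactly how Theorems \ref{prop4.7} and \ref{prop4.8} are proved, and they strengthen the present proposition). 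Your concluding ``in particular'' step is correct as stated.
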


\begin{proof}
Suppose $E$ is semistable of rank 2 and degree $d$. If $3\gamma_1 -1 \leq d \leq 2g-2$, then by \cite[Corollary 3]{m},
$$
\gamma(E) \geq \gamma_1.
$$
If $d \leq 3 \gamma_1 -2$, then by \cite[Lemma 5]{m}, $h^0(E) \leq \frac{d-\gamma_1}{4} +2$. So
$$
\gamma(E) \geq \frac{d + \gamma_1}{4}.
$$
In the last case $E$ can contribute to $\gamma_2$ only if $d \geq \gamma_1 + 4$ and to $\gamma_2'$ only if $d \geq \gamma_1 + 8$. 
This gives the assertion.
\end{proof}

\section{The invariants $d_r$}

For any positive integer $r$ we define the invariant $d_r$ of the curve $C$ by
$$
d_r := \min \{ \deg L \;|\; L \; \mbox{a line bundle on} \; C \; \mbox{with} \; h^0(L) \geq r +1\}.
$$
Note that $d_1$ is the gonality of $C$, $d_2$ is the minimal degree of a non-degenerate rational map of the curve $C$ into the projective
plane etc. 
We refer to the sequence $d_1,d_2,\ldots$ as the {\it gonality sequence} of $C$.
We say that $L$ {\it computes} $d_r$ if $\deg L = d_r$ and $h^0(L) \geq r+1$. We say also that $d_r$ 
{\it computes} $\gamma_1$ if $\gamma_1 = d_r -2r$.

\begin{rem}  \label{rem4.1}
\emph{The gonality sequence is usually defined only for those $r$ for which $d_r \leq g-1$ (see \cite[Digression (3.5)]{ckm}),
 but for 
our purposes the definition above is more convenient. If $k = d_1$ computes $\gamma_1$ (i.e. $d_1 = \gamma_1 + 2$), 
the curve $C$ is called $k$-{\it gonal}.}
\end{rem}

\begin{lem} \label{lem4.1}
\emph{(a)} $d_r < d_{r+1}$ for all $r$;\\
\emph{(b)} if $L$ computes $d_r$, then $h^0(L) = r+1$ and $L$ is generated;\\
\emph{(c)} $d_{r+s} \leq d_{r} + d_s$ for any $r, s \geq 1$;\\
\emph{(d)} $d_{r} \leq r(g-1)$.
\end{lem}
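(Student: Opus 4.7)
The plan is to prove the four parts in order, with the same ``remove a point'' trick driving (a) and (b), an explicit comparison of function spaces for (c), and induction for (d).

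For (a), suppose $L$ computes $d_{r+1}$, so $\deg L = d_{r+1}$ and $h^0(L) \geq r+2$. For any point $p \in C$, the evaluation $H^0(L) \to L|_p$ has image of dimension at most $1$, so $h^0(L(-p)) \geq h^0(L) - 1 \geq r+1$; since $\deg L(-p) = d_{r+1} - 1$, the minimality of $d_r$ yields $d_r \leq d_{r+1} - 1$.

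For (b), if $L$ computes $d_r$ and $h^0(L) \geq r+2$, the same trick produces a line bundle of degree $d_r - 1$ with $h^0 \geq r+1$, contradicting minimality, so $h^0(L) = r+1$. If instead $L$ had a base point $p$, then $L(-p)$ would have the \emph{same} space of sections as $L$ but smaller degree $d_r - 1$, again contradicting minimality; hence $L$ is generated.

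For (c), choose $L_1$ and $L_2$ computing $d_r$ and $d_s$, both generated by (b). Fix any nonzero $\tau \in H^0(L_2)$ with divisor $D_\tau$; since $|L_1|$ is base-point-free and $\mathrm{Supp}(D_\tau)$ is finite, we can choose $\sigma \in H^0(L_1)$ whose divisor $D_\sigma$ is disjoint from $\mathrm{Supp}(D_\tau)$. Identifying $L_1 \otimes L_2 \cong \mathcal{O}(D_\sigma + D_\tau)$, both $H^0(\mathcal{O}(D_\sigma))$ and $H^0(\mathcal{O}(D_\tau))$ inject into $H^0(\mathcal{O}(D_\sigma + D_\tau))$; their intersection consists of rational functions $f$ with $(f) + D_\sigma \geq 0$ and $(f) + D_\tau \geq 0$, and since the supports are disjoint, any such $f$ is everywhere regular and hence constant. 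The sum therefore has dimension $(r+1) + (s+1) - 1 = r+s+1$, giving $h^0(L_1 \otimes L_2) \geq r+s+1$, so $d_{r+s} \leq d_r + d_s$.

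For (d), I induct on $r$ using (c): $d_{r+1} \leq d_r + d_1$. The base case $d_1 \leq g-1$ follows from the Brill--Noether existence theorem of Kempf and Kleiman--Laksov, since $\rho(g,1,g-1) = g-4 \geq 0$ for $g \geq 4$. The main obstacle is the dimension count in (c), where the disjointness of the supports of $D_\sigma$ and $D_\tau$, made available precisely by part (b), is what forces the intersection of the two ``pole subspaces'' to collapse to the constants.
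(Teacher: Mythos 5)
Your proof is correct, and for parts (a), (b) and (d) it supplies exactly the elementary arguments that the paper leaves implicit ((a) and (b) are declared ``obvious'' there, and (d) is deduced from (c) and $d_1\le g-1$ just as you do; your appeal to Brill--Noether existence for $\rho(g,1,g-1)=g-4\ge0$ is a legitimate justification of that last fact). The one place where you genuinely diverge is (c): the paper applies the Hopf lemma to the multiplication map $H^0(L)\otimes H^0(M)\to H^0(L\otimes M)$ --- a bilinear map with no zero-divisors on a curve over an algebraically closed field, whence the image has dimension at least $h^0(L)+h^0(M)-1$ --- while you instead exhibit an explicit $(r+s+1)$-dimensional subspace of $H^0(L\otimes M)$ by choosing sections $\sigma,\tau$ with disjoint zero divisors (possible by base-point-freeness from (b)) and observing that the two ``pole subspaces'' $H^0(\mathcal{O}(D_\sigma))$ and $H^0(\mathcal{O}(D_\tau))$ meet only in the constants. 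Your argument is self-contained and makes transparent exactly where base-point-freeness enters, at the cost of a small amount of bookkeeping; the paper's is a one-line citation of a general lemma that does not even need $L$ and $M$ to be generated. Both yield the same inequality, and your version of (d) then closes the induction correctly.
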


\begin{proof}
(a) and (b) are obvious. (c): Suppose $L$ computes $d_r$ and $M$ computes $d_s$. 
The map
$$
H^0(L) \otimes H^0(M) \ra H^0(L \otimes M)
$$
satisfies the hypotheses of the Hopf lemma. Hence 
$$
h^0(L \otimes M) \geq h^0(L) + h^0(M) -1 = r+s+1.
$$
(d) follows from (c) and the fact that $d_1 \leq g-1$, since $g \geq 4$. 
\end{proof}

\begin{lem} \label{lem4.3}
If $d_r + d_s = d_{r+s}$, then $d_n = nd_1$ for all $n \leq r+s$.
\end{lem}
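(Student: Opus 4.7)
The plan is to prove the statement by induction on $r+s$, with the key case being $s=1$, which I dispatch using the base-point-free pencil trick. The base of the induction $r+s=2$ (so $r=s=1$) is immediate, since the hypothesis then reads $d_2 = 2d_1$.

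For the case $s=1$: assume $d_1 + d_r = d_{r+1}$. Let $A$ compute $d_1$ (so $\deg A = d_1$, $h^0(A) = 2$, and $A$ is generated by Lemma~\ref{lem4.1}(b)) and let $L$ compute $d_r$. The Hopf lemma (as in the proof of Lemma~\ref{lem4.1}(c)) gives $h^0(A \otimes L) \geq r + 2$, and combined with $\deg(A \otimes L) = d_1 + d_r = d_{r+1}$, the bundle $A \otimes L$ must attain the minimum defining $d_{r+1}$. By Lemma~\ref{lem4.1}(b), $h^0(A \otimes L) = r+2$, so dimension counting makes the multiplication $H^0(A) \otimes H^0(L) \to H^0(A \otimes L)$ surjective. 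The base-point-free pencil trick then gives
\[
0 \to L \otimes A^{-1} \to H^0(A) \otimes L \to A \otimes L \to 0,
\]
and taking sections yields $h^0(L \otimes A^{-1}) = 2(r+1) - (r+2) = r$. Since $\deg(L \otimes A^{-1}) = d_r - d_1$, this forces $d_{r-1} \leq d_r - d_1$, and combined with the subadditivity $d_r \leq d_{r-1} + d_1$ of Lemma~\ref{lem4.1}(c), we get $d_r = d_{r-1} + d_1$. The resulting equation $d_1 + d_{r-1} = d_r$ has the same form as the original hypothesis with $r$ replaced by $r-1$, so iterating produces $d_n = nd_1$ for $1 \leq n \leq r+1$.

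For $s \geq 2$, the plan is to reduce to the case $s=1$ by establishing $d_1 + d_{r+s-1} = d_{r+s}$. Let $L$, $M$ compute $d_r$, $d_s$, and set $N = L \otimes M$, which by hypothesis computes $d_{r+s}$; in particular the multiplication $H^0(L) \otimes H^0(M) \to H^0(N)$ is surjective. Applying the same pencil-trick analysis to $A$ and $N$ gives
\[
0 \to N \otimes A^{-1} \to H^0(A) \otimes N \to A \otimes N \to 0,
\]
and provided the image of $H^0(A) \otimes H^0(N) \to H^0(A \otimes N)$ has the minimal possible dimension $r+s+2$ predicted by Hopf, one gets $h^0(N \otimes A^{-1}) = r+s$ and hence $d_{r+s-1} \leq d_{r+s} - d_1$; together with subadditivity this yields $d_1 + d_{r+s-1} = d_{r+s}$, whereupon the case $s=1$ (with $r$ replaced by $r+s-1$) concludes the proof.

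The main obstacle is ensuring the minimality of the image dimension, equivalently that $A \otimes N$ attains the minimum defining $d_{r+s+1}$: Hopf's lemma supplies only the lower bound, so further input is needed to exclude $h^0(A \otimes N) > r+s+2$. I would attempt this via a refined induction that combines the hypothesis $d_r + d_s = d_{r+s}$ with the pencil-trick analyses of $A$ applied to $L$ and to $M$ separately, exploiting the factorisation $N = L \otimes M$ and the surjectivity of the multiplication $H^0(L) \otimes H^0(M) \to H^0(N)$ to propagate the equality through intermediate splittings of $r+s$.
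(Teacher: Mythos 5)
Your case $s=1$ is correct and complete: since $\deg(A\otimes L)=d_1+d_r=d_{r+1}$, Lemma \ref{lem4.1} forces $h^0(A\otimes L)=r+2$, the multiplication map $H^0(A)\otimes H^0(L)\to H^0(A\otimes L)$ is then surjective for dimension reasons, and the base-point-free pencil trick identifies its kernel with $H^0(L\otimes A^{-1})$, giving $h^0(L\otimes A^{-1})=r$, hence $d_{r-1}\le d_r-d_1$, and with subadditivity the relation $d_r=d_{r-1}+d_1$ descends to give $d_n=nd_1$ for all $n\le r+1$. This is a genuinely elementary route for that case, and it is not the paper's argument.

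But the case $\min(r,s)\ge 2$ is not proved, and this is a real gap, not a technicality. Everything in your reduction hinges on the image of $H^0(A)\otimes H^0(N)\to H^0(A\otimes N)$ having the \emph{minimal} dimension $r+s+2$, equivalently on $h^0(N\otimes A^{-1})\ge r+s$; the Hopf lemma only bounds that image from below, and the surjectivity of $H^0(L)\otimes H^0(M)\to H^0(N)$ gives no upper bound on it. Moreover $A$ is just some pencil computing $d_1$, and the statement that $N\otimes A^{-1}$ carries $r+s$ sections is essentially equivalent to knowing that $L\otimes M$ is the $(r+s)$-th power of the pencil $A$ -- which is the whole content of the lemma. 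The ``refined induction'' sketched in your final paragraph is a hope, not an argument. The paper closes exactly this gap by invoking a structural theorem of Eisenbud (\cite[Corollary 5.2]{e}, see also \cite[Lemma 1.8]{cm}): if $h^0(L\otimes M)=h^0(L)+h^0(M)-1$ with $h^0(L),h^0(M)\ge 2$ -- which is forced here because $\deg(L\otimes M)=d_{r+s}<d_{r+s+1}$ -- then $L\simeq N_0^{\,r}$ and $M\simeq N_0^{\,s}$ for a single pencil $N_0$. From this $d_r=r\deg N_0\ge rd_1$, subadditivity gives $d_r=rd_1$ and $d_s=sd_1$, and the chain $d_{r+s}=(r+s)d_1\ge d_n+d_{r+s-n}\ge d_{r+s}$ finishes the proof for every $n\le r+s$. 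Your $s=1$ computation is in effect a proof of the easiest instance of Eisenbud's result; to complete your approach you need the general case, at which point one may as well cite it and argue as the paper does.
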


\begin{proof}
Suppose $L$ computes $d_r$ and $M$ computes $d_s$. Then $h^0(L \otimes M) \geq r + s + 1$ as in the proof of Lemma \ref{lem4.1} (c).
On the other hand, $\deg (L \otimes M) = d_r + d_s = d_{r+s}$. If $h^0(L \otimes M) > r+s+1$, then $\deg(L \otimes M) \geq d_{r+s+1}$
which contradicts Lemma \ref{lem4.1} (a). So
$$
h^0(L \otimes M) = r+s+1.
$$
It now follows from \cite[Corollary 5.2]{e} (see also \cite[Lemma 1.8]{cm}) that there 
exists a line bundle $N$ with $h^0(N) \geq 2$ such that
$$
L \simeq N^r \quad \mbox{and} \quad M \simeq N^s.
$$
Hence
$$
d_r = \deg L = r \deg N \geq r d_1.
$$
By Lemma \ref{lem4.1} (c), we have $d_r = rd_1$ and similarly $d_s = sd_1$. So $d_{r+s} = (r+s)d_1$. Then 
$$
d_{r+s} = (r+s)d_1 = nd_1 + (r+s-n)d_1 \geq d_n + d_{r+s-n} \geq d_{r+s}.
$$
So the inequalities must all be equalities. In particular $d_n = nd_1$.
\end{proof}

\begin{rem} \label{rem4.2}
\emph{ (a) Clifford's theorem implies that $d_r \geq 2r$ for $r \leq g-1$; moreover $d_{g-1} = 2g-2$.\\
(b) Riemann-Roch implies that $d_r = r + g$ for $r \geq g$.\\
(c) Brill-Noether theory implies that $d_r \leq g - \left[\frac{g}{r+1} \right] + r$ for all $r$ and for a general curve we have
$$
d_r = g - \left[\frac{g}{r+1} \right] + r.
$$ So for a general curve we know the gonality sequence.
Indeed, for our purposes, we can define a general curve to be one which has this gonality sequence.}
\end{rem}

\begin{rem} \label{rem4.3}
\emph{(a) If $C$ is hyperelliptic, then 
$$
d_r = \left\{ \begin{array}{lcl}
              2r & \mbox{for} & r \leq g-1,\\
              r+g & \mbox{for} & r
              \geq g.
              \end{array}  \right. 
$$}
 
\emph{(b) If $C$ is trigonal, then 
$$
d_r =\left\{ \begin{array}{lcl}
             3r &  &  1 \leq r \leq \left[ \frac{g-1}{3} \right],\\
             r + g - 1 - \left[ \frac{g-r-1}{2} \right] & \mbox{for} & \left[ \frac{g-1}{3} \right] < r \leq g-1,\\
             r+g & & r \geq g.
             \end{array}  \right. 
$$
This follows from Maroni's theory (see \cite[Proposition 1]{ms}) and, for $r > [\frac{g-1}{3}]$, 
Serre duality and Riemann-Roch.}

\emph{(c) If $C$ is a general $k$-gonal curve, $k \geq 4$, then by \cite[Theorem 3.1]{k},
$$
d_r = kr \quad \mbox{for} \quad 1 \leq r \leq \frac{1}{k-2} \left[ \frac{g-4}{2} \right].
$$
For $k = 4$, this can be marginally improved by \cite[Theorem 4.3.2]{cm1} and then extended 
using Serre duality and Riemann-Roch
to give  
$$
d_r = \left\{ \begin{array}{lcl}
      4r &  & 1 \leq r \leq \left[ \frac{g-1}{4} \right],\\
      r+g-1 - \left[ \frac{g-r-1}{3} \right] & \mbox{for} & \left[ \frac{g-1}{4} \right] < r \leq g-1,\\
      r+g & & r \geq g,
      \end{array}  \right. 
$$
except when $g \equiv 0 \mod 4$, in which case $d_{\frac{g}{4}} = g-1$ (see \cite[Proposition 3.3]{cm}).}

\emph{(d) If $C$ is bielliptic of genus $g \geq 5$, then $C$ is quadrigonal, but its gonality sequence is 
quite different from that of (c). In fact,
$$
d_r = \left\{ \begin{array}{lcl}
              2r+2 && 1 \leq r \leq g-3,\\
              2g-3 & \mbox{for} & r=g-2,\\
              2g-2 && r=g-1,\\
              r+g && r \geq g.
\end{array}  \right. 
$$}
\end{rem}

\begin{lem} \label{lem4.4}
\hspace*{1cm} $d_r \geq \min \{ \gamma_1 + 2r, g + r -1 \}$. 
\end{lem}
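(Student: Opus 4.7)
The plan is to dichotomize on the value of $h^1(L)$ for a line bundle $L$ computing $d_r$.

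First I would pick a line bundle $L$ with $\deg L = d_r$ and $h^0(L) \geq r+1$. By Lemma \ref{lem4.1}(b) we may in fact take $h^0(L) = r+1$, so $h^0(L) \geq 2$ (here the inequality $r \geq 1$ is used).

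The dichotomy is on $h^1(L)$. If $h^1(L) \geq 2$, then $L$ satisfies both hypotheses in the definition of the classical Clifford index, so by definition
$$
\gamma_1 \leq \deg L - 2(h^0(L)-1) = d_r - 2r,
$$
giving $d_r \geq \gamma_1 + 2r$. If instead $h^1(L) \leq 1$, apply Riemann--Roch:
$$
r+1 - h^1(L) = h^0(L) - h^1(L) = d_r - g + 1,
$$
which rearranges to $d_r = r + g - h^1(L) \geq r + g - 1$.

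In either case $d_r$ is at least one of the two quantities $\gamma_1 + 2r$ or $g + r - 1$, hence at least their minimum. The argument is entirely routine and I do not anticipate any real obstacle; the only point to keep in mind is that the two cases $h^1(L) \geq 2$ and $h^1(L) \leq 1$ together exhaust the possibilities, so no further analysis is needed.
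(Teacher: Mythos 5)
Your proof is correct and is essentially the paper's own argument: both take $L$ computing $d_r$, use Riemann--Roch to relate $h^1(L)$ to $d_r - g - r$, and split according to whether $L$ contributes to $\gamma_1$ (your case split on $h^1(L)\ge 2$ versus $h^1(L)\le 1$ is the same dichotomy as the paper's $d_r < g+r-1$ versus $d_r \ge g+r-1$). No issues.
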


\begin{proof}
Let $L$ be a line bundle computing $d_r$. Then $h^1(L) = r + g - d_r$. If $d_r < g+r-1$, 
then $L$ contributes to $\gamma_1$ and $d_r \geq \gamma_1 + 2r$.
\end{proof}

\begin{lem}
$\gamma_1 = d_r -2r$, where $r=1$, except in the following cases:\\
\emph{(a)} if $C$ is a smooth plane curve, then $r=2$; \\
\emph{(b)} if $C$ is exceptional in the sense of \cite{elms}, then $r \leq \frac{g+2}{4}$ and for $r \leq 9$ we have
$r = \frac{g+2}{4}$.
\end{lem}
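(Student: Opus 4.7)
The proof pivots on the \emph{Clifford dimension} $r_C$ of $C$, defined as the minimum of $h^0(L)-1$ over all line bundles $L$ that contribute to and compute $\gamma_1$. My plan is to identify the $r$ of the statement with $r_C$ and then read off the three cases from the classification of Clifford dimensions in \cite{elms}.

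First I would establish the equality $\gamma_1 = d_{r_C} - 2 r_C$ by a two-sided inequality. For the lower bound, choose $L$ computing $\gamma_1$ with $h^0(L) = r_C + 1$; by the very definition of $d_{r_C}$ one has $\deg L \geq d_{r_C}$, so
\[
\gamma_1 \;=\; \deg L - 2 r_C \;\geq\; d_{r_C} - 2 r_C.
\]
For the upper bound, let $L_0$ compute $d_{r_C}$; by Lemma~\ref{lem4.1}(b), $h^0(L_0) = r_C + 1$. Using the Serre-dual form of the Clifford index (and replacing $L$ above by $K_C \otimes L^*$ if necessary, noting that this can only decrease $h^0$ when $\deg L > g-1$), we may assume $\deg L \leq g-1$. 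Hence $d_{r_C} \leq g-1$ and then $h^1(L_0) = r_C + g - d_{r_C} \geq r_C + 1 \geq 2$. Thus $L_0$ contributes to $\gamma_1$, giving $\gamma_1 \leq \gamma(L_0) = d_{r_C} - 2 r_C$, and equality follows.

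Next, I would invoke the main classification theorem of Eisenbud--Lange--Martens--Schreyer \cite{elms}: one has $r_C = 1$ unless $C$ is a smooth plane curve (where the $g^2_d$ cut out by lines forces $r_C = 2$) or $C$ is \emph{exceptional}, in which case $r_C \geq 3$. The bound $r_C \leq (g+2)/4$ for exceptional curves is established in \cite{elms}, and the sharpening $r_C = (g+2)/4$ for small values $r_C \leq 9$ results from the detailed case-by-case analysis in \cite{elms} and the literature cited there.

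The hard part of the argument is entirely in this last paragraph: both the bound $r_C \leq (g+2)/4$ and the rigidity $r_C = (g+2)/4$ for small Clifford dimension are deep results of \cite{elms}, which I would use as a black box. The novel content of the present lemma is just the identification of the combinatorial quantity $d_r - 2r$, defined via the gonality sequence, with the value $\gamma_1$ of classical Clifford theory, attained precisely at $r = r_C$.
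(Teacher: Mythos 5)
Your proposal is correct and takes essentially the same route as the paper, whose entire proof is the citation ``This is a consequence of the results of \cite{elms}''; you simply make explicit the routine bridge $\gamma_1=d_{r_C}-2r_C$ (via the minimality of the Clifford dimension forcing $\deg L\le g-1$, hence $d_{r_C}\le g-1$ and $h^1(L_0)\ge2$) before quoting the ELMS classification and bounds as a black box, exactly as the authors intend.
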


\begin{proof}
This is a consequence of the results of \cite{elms}. 
\end{proof}

The following lemma is a restatement of \cite[Lemma 3.9]{pr}

\begin{lem} \label{lempr}
Let $E$ be a vector bundle of rank $n$ with $h^0(E) \geq n+s$, $s \geq 1$. Suppose that $E$ 
has no proper subbundle $N$ with $h^0(N) >\emph{\rk} N$. Then
$$
\deg E \geq d_{ns}.
$$
\end{lem}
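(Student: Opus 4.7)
The plan is to establish the lower bound $h^{0}(\det E)\geq ns+1$, from which the conclusion $\deg E=\deg\det E\geq d_{ns}$ is immediate from the definition of the gonality sequence.

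To this end I would fix a subspace $V\subseteq H^{0}(E)$ of dimension $n+s$ and consider the evaluation morphism $V\otimes\mathcal{O}_{C}\to E$. Passing to $n$-th exterior powers of the target yields a bundle map $\bigwedge^{n}V\otimes\mathcal{O}_{C}\to\det E$, and hence a linear map on global sections
\[
\alpha:\bigwedge\nolimits^{n}V\longrightarrow H^{0}(\det E),\qquad\sigma_{1}\wedge\cdots\wedge\sigma_{n}\longmapsto\sigma_{1}\wedge\cdots\wedge\sigma_{n}.
\]

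The core step, and the only point where the hypothesis on subbundles is used, is the following non-vanishing claim: for any linearly independent $\sigma_{1},\dots,\sigma_{n}\in V$, one has $\alpha(\sigma_{1}\wedge\cdots\wedge\sigma_{n})\neq 0$. I would argue by contradiction. If the image wedge were identically zero, then $\sigma_{1}(p),\dots,\sigma_{n}(p)$ would be linearly dependent in $E_{p}$ at every point $p\in C$, so the subsheaf of $E$ generated by $\sigma_{1},\dots,\sigma_{n}$ would have generic rank strictly less than $n$. Since $C$ is smooth, its saturation $N$ is a proper subbundle of $E$, and since $N$ contains all the $\sigma_{i}$ we would obtain $h^{0}(N)\geq n>\rk N$, contradicting the assumption on $E$.

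To convert this non-vanishing statement into a numerical bound on $\rk\alpha$, I would pass to projective geometry. The nonzero decomposable elements of $\bigwedge^{n}V$ form, up to scalar, the Pl\"ucker image of $G(n,V)$ inside $\mathbb{P}(\bigwedge^{n}V)=\mathbb{P}^{\binom{n+s}{n}-1}$; this is an irreducible projective variety of dimension $ns$. The claim says that the linear subspace $\mathbb{P}(\ker\alpha)$ is disjoint from this Pl\"ucker variety, and the standard fact that any linear subspace of $\mathbb{P}^{N}$ disjoint from a closed subvariety of dimension $d$ has dimension at most $N-d-1$ then gives $\dim\ker\alpha\leq\binom{n+s}{n}-ns-1$, hence $\rk\alpha\geq ns+1$. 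So $h^{0}(\det E)\geq ns+1$ and $\deg E\geq d_{ns}$, as required. The main obstacle is the non-vanishing claim; once that is in place, the Pl\"ucker / intersection-dimension argument is routine, and the passage via $\det E$ completes the proof.
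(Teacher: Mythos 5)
Your proof is correct and follows the same strategy as the paper: reduce the statement to the inequality $h^0(\det E)\geq ns+1$ and then invoke the definition of $d_{ns}$. The only difference is that the paper obtains this inequality by observing $h^0(E^*)=0$ and citing Paranjape--Ramanan \cite[Lemma 3.9]{pr}, whereas you have supplied a self-contained proof of that inequality --- and your wedge-map/Pl\"ucker dimension argument is essentially the content of the cited lemma.
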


\begin{proof}
Let $E$ be as in the statement of the lemma. Note that $h^0(E^*) = 0$, for otherwise the kernel $N$ of a non-zero 
homomorphism $E \ra \cO_C$ would contradict the hypothesis.
\cite[Lemma 3.9]{pr} now implies that $h^0(\det E) \geq ns + 1$. So $\deg E = \deg \det E \geq d_{ns}$.
\end{proof}

Suppose the line bundle $L$ computes $d_{n}$. Define a line bundle $E_L$ of rank $n$ and degree $d_{n}$ by the exact sequence
\begin{equation} \label{eq5.1}
0 \ra E_L^* \ra H^0(L) \otimes \cO_C \ra L \ra 0.
\end{equation}
As mentioned in the introduction, this is a special case of the {\it dual span construction} \cite{bu}.
Note that $\mu(E_L) = \frac{d_n}{n} \leq g-1$ by Lemma \ref{lem4.1} (d).

\begin{prop} \label{prop5.1}
\emph{(a)} $h^0(E_L^*) = 0$;\\
\emph{(b)} $E_L$ is generated;\\
\emph{(c)} if $n \geq \,(>)\; g$, then $E_L$ is semistable (stable);\\
\emph{(d)} if $\frac{d_{p}}{p} \geq \frac{d_{n}}{n}$ for all $p < n$, then $E_L$ is semistable;\\
\emph{(e)} if $\frac{d_{p}}{p} > \frac{d_{n}}{n}$ for all $p < n$, then $E_L$ is stable.
\end{prop}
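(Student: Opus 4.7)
Parts (a) and (b) are direct consequences of the defining sequence. For (a), applying $H^0$ yields
$$
0 \to H^0(E_L^*) \to H^0(L) \to H^0(L),
$$
where the second map is the identity by construction of the evaluation pairing, so $H^0(E_L^*) = 0$. For (b), dualizing the defining sequence gives
$$
0 \to L^* \to H^0(L)^* \otimes \cO_C \to E_L \to 0,
$$
exhibiting $E_L$ as a quotient of a trivial bundle, hence globally generated.

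For (d) and (e), I would argue by contradiction. Under (d), assume $E_L$ is not semistable and let $F$ be the final quotient in its Harder--Narasimhan filtration, which is semistable with $\mu(F) < \mu(E_L) = d_n/n$. Under (e), the strict hypothesis implies that of (d), so $E_L$ is already semistable; if $E_L$ is not stable, choose any subbundle $G$ with $\mu(G) = \mu(E_L)$ and set $F := E_L/G$. This $F$ is automatically semistable of slope $d_n/n$ (any subbundle of $F$ with larger slope would pull back to a destabilizer of $E_L$). In both cases, put $s := \rk F$. Since $F$ is a generated quotient of $E_L$ that cannot be trivial (a trivial quotient $\cO_C^s$ would dualize to an inclusion $\cO_C^s \hookrightarrow E_L^*$, contradicting (a)) and since a generated rank-$s$ bundle with exactly $s$ global sections must be trivial, we have $h^0(F) \geq s+1$.

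The crucial claim is that $F$ admits no proper subbundle $N$ with $h^0(N) > \rk N$. If one existed, I would choose $N$ of minimal rank, so that $N$ itself satisfies the hypothesis of Lemma~\ref{lempr}; the lemma then yields $\deg N \geq d_{\rk N}$, and since $\rk N \leq s < n$ the hypothesis of (d) (respectively (e)) forces $\mu(N) \geq d_n/n$ (respectively $>$). But semistability of $F$ gives $\mu(N) \leq \mu(F) \leq d_n/n$, a contradiction in each case. Lemma~\ref{lempr} now applies to $F$ itself, producing $\deg F \geq d_s$ and hence $\mu(F) \geq d_s/s \geq d_n/n$ (respectively $>$), which contradicts the inequality $\mu(F) < d_n/n$ of (d) (respectively the equality $\mu(F) = d_n/n$ of (e)).

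The main obstacle I expect is the double use of Lemma~\ref{lempr}: once to rule out bad subbundles of $F$ and once to control $F$ itself, with each application requiring the minimal-rank trick to validate the hypothesis. The bootstrap in (e), where (d) is used to first force semistability before refining to stability, is a secondary technical point. Finally, part (c) is a corollary of (d) and (e): for $n \geq g$, Remark~\ref{rem4.2}(b) gives $d_p = p+g$ for $p \geq g$, so $d_p/p = 1 + g/p \geq 1 + g/n = d_n/n$ whenever $g \leq p < n$; for $p < g$, Clifford's theorem gives $d_p \geq 2p$, hence $d_p/p \geq 2 \geq 1 + g/n = d_n/n$ since $n \geq g$. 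These inequalities become strict whenever $n > g$, so (e) yields stability in that case.
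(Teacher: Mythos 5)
Your proof is correct, and for parts (c)--(e) it takes a genuinely different route from the paper's. For (d) and (e) the paper argues directly on an \emph{arbitrary} quotient bundle $M$ of $E_L$ of rank $p<n$: since $M$ is generated with $h^0(M^*)=0$ (both inherited from (a) and (b)), it has $h^0(M)\ge p+1$; choosing a $(p+1)$-dimensional generating subspace $V\subset H^0(M)$, the kernel of $V\otimes\cO_C\to M$ is $\det M^*$, and dualizing gives $h^0(\det M)\ge p+1$, hence $\deg M\ge d_p$ and $\mu(M)\ge \frac{d_p}{p}\ge\frac{d_n}{n}$ for every quotient at once. You instead run a contradiction through the Harder--Narasimhan filtration (resp. a slope-equal quotient) and apply Lemma \ref{lempr} twice, once with the minimal-rank trick to clear the subbundle hypothesis on the destabilizing quotient $F$ and once to bound $\deg F$ itself. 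Both arguments are sound; the paper's is more elementary (it needs only the determinant of a generating subspace, not the Paranjape--Ramanan lemma, and no HN machinery) and proves the stronger fact that \emph{all} quotients satisfy $\deg M\ge d_p$, while your argument is closer in spirit to the proofs of Proposition \ref{prop4.13} and Theorem \ref{pr4.9}. For (c) the paper simply quotes Butler's theorem \cite[Theorem 1.2]{b} for line bundles, whereas you deduce it from (d) and (e) using $d_r=r+g$ for $r\ge g$ and Clifford's bound $d_r\ge 2r$ for $r\le g-1$; this is precisely the observation the paper records separately in Remark \ref{rem4.8}, so it is a legitimate alternative that makes (c) a corollary rather than an appeal to an external result.
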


\begin{proof}
(a) and (b) are obvious. (c): If $n \geq g$, then $d_{n} = n+g$ by Remark \ref{rem4.2} (b). Now apply \cite[Theorem 1.2]{b}
for the case of a line bundle.

(d): Let $M$ be a quotient bundle of $E_L$ of rank $p<n$. It follows from (a) and (b) that $M$ 
is generated with $h^0(M^*) = 0$. So $h^0(M) \geq p+1$.
Choose a $(p+1)$-dimensional subspace $V$ of $H^0(M)$ which generates $M$. Then we have an exact sequence 
$$
0 \ra \det M^* \ra V \otimes \cO_C \ra M \ra 0.
$$
Dualizing this, we see that $\det M$ is a line bundle with $h^0(\det M) \geq p+1$. So
$$
\deg M = \deg \det M \geq d_{p}.
$$
Under the hypothesis of (d), $\mu(M) \geq \mu(E_L)$. Since this is true for all quotient bundles of $E_L$, this proves that
$E_L$ is semistable. 

For (e) the proof proceeds as for (d), but now we have $\mu(M) > \mu(E_L)$ for every proper quotient bundle $M$ of $E_L$. Hence
$E_L$ is stable. 
\end{proof}

\begin{rem} \label{rem4.8}
\emph{It follows from Remark \ref{rem4.2} (a) and (b) that the hypothesis of Proposition \ref{prop5.1} (d) 
is satisfied for all 
$n \geq g$ and similarly the hypothesis of Proposition \ref{prop5.1} (e) is satisfied for all $n > g$. So these two 
parts of the proposition are generalisations of part (c). Note also that $\frac{d_{g-1}}{g-1} = \frac{d_g}{g} = 2$ 
by the same remark.
}
\end{rem} 

\begin{prop} \label{prop4.13}
Suppose $E$ is a semistable vector bundle of rank $n$ with 
$\deg E < \frac{n d_p}{p}$ for all $p \leq n$. Then $h^0(E) \leq n$.
\end{prop}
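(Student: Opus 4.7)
I would argue by contradiction: assume $h^0(E) \geq n+1$ and derive a contradiction from the degree inequality. The fundamental tool is Lemma \ref{lempr}, which gives a lower bound $\deg F \geq d_{(\rk F)\cdot s}$ whenever $h^0(F) \geq \rk F + s$ and $F$ has no proper subbundle with $h^0 > \rk$. The Lemma cannot be applied directly to $E$ in general, because $E$ may possess proper subbundles with $h^0 > \rk$, so the heart of the argument is to locate the right subbundle of $E$ to which the Lemma \emph{does} apply.

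If $E$ itself has no proper subbundle $N$ with $h^0(N) > \rk N$, then Lemma \ref{lempr} with $s=1$ gives $\deg E \geq d_n$, which contradicts the hypothesis at $p=n$ since $\frac{n\, d_n}{n} = d_n$.

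Otherwise I would choose a proper subbundle $N \subsetneq E$ of \emph{minimal} rank $p$ (so $1 \leq p < n$) among those satisfying $h^0(N) > \rk N$; by the choice, $h^0(N) \geq p+1$. Any proper subbundle $N' \subsetneq N$ is automatically saturated inside $E$ (saturation in $N$ together with $E/N$ torsion-free makes $E/N'$ torsion-free via the sequence $0 \to N/N' \to E/N' \to E/N \to 0$), so it is a proper subbundle of $E$ of rank strictly less than $p$; by minimality it satisfies $h^0(N') \leq \rk N'$. Hence Lemma \ref{lempr} applied to $N$ with $s=1$ yields $\deg N \geq d_p$. Semistability of $E$ gives $\deg N \leq \frac{p}{n}\deg E$, and combining the two yields $\deg E \geq \frac{n\, d_p}{p}$, contradicting the hypothesis at this particular $p$.

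The only nontrivial step is spotting that one should pass to a minimal-rank subbundle with the excess-section property; after that the hypothesis on $\deg E$ and semistability are tailor-made to match the bound produced by the Lemma. A minor bookkeeping point is to verify that saturation in $N$ implies saturation in $E$, but this is immediate from the short exact sequence above.
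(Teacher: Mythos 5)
Your proof is correct and follows essentially the same route as the paper's: reduce to a subbundle $N$ of minimal rank among those with $h^0 > \rk$, apply Lemma \ref{lempr} to $N$ to get $\deg N \geq d_p$, and contradict semistability via $\frac{d_p}{p} \leq \frac{\deg N}{p} \leq \frac{\deg E}{n}$. The saturation bookkeeping you include is a detail the paper leaves implicit, but it is the same argument.
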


\begin{proof}
Suppose $h^0(E) \geq n+1$. If $E$ possesses no proper subbundle $N$ with $h^0(N) \geq \mbox{rk} N + 1$, then 
$\deg E \geq d_{n}$ by Lemma \ref{lempr}, contradicting the hypotheses.

So let $N$ be a subbundle of rank $p$ of $E$ minimal with respect to the property $h^0(N) \geq p+1$.
Then Lemma \ref{lempr} applies to $N$ and 
$$
\frac{\deg N }{p} \geq \frac{d_{p(h^0(N) - p)}}{p} \geq \frac{d_p}{p} > \frac{\deg E}{n},
$$
contradicting the semistability of $E$.
\end{proof}

\begin{cor} \label{lem5.3}
Suppose $\frac{d_{p}}{p} \geq \frac{d_{n}}{n}$ for $p<n$ and $E$ is a semistable vector bundle of rank $n$ with 
$\deg E < d_{n}$. Then $h^0(E) \leq n$. \hfill$\square$ 
\end{cor}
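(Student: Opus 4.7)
The plan is to derive the corollary as an immediate application of Proposition \ref{prop4.13}. That proposition concludes $h^0(E) \leq n$ from the hypothesis $\deg E < \frac{n d_p}{p}$ for all $p \leq n$, so the only task is to verify that this inequality holds for every $p \leq n$ under our assumptions.

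First I would handle the case $p = n$ directly: by assumption $\deg E < d_n$, which is exactly $\deg E < \frac{n d_n}{n}$. Next, for $p < n$, I would combine the given inequality $\frac{d_p}{p} \geq \frac{d_n}{n}$ with $\deg E < d_n$; multiplying through by $n$ yields $\deg E < d_n \leq \frac{n d_p}{p}$, as required. Thus $\deg E < \frac{n d_p}{p}$ for all $p \leq n$, and Proposition \ref{prop4.13} applies.

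There is no real obstacle here — the corollary is a one-line bookkeeping exercise once Proposition \ref{prop4.13} is in hand. The only point worth checking is that the hypothesis of the proposition is stated for all $p \leq n$ (including $p = n$), which must be verified separately from the assumed inequality $\frac{d_p}{p} \geq \frac{d_n}{n}$ for $p < n$; this is trivially true since $\deg E < d_n$ by hypothesis.
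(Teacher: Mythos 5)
Your argument is correct and is exactly the intended derivation: the paper states this as an immediate corollary of Proposition \ref{prop4.13} (with no written proof), and your verification that $\deg E < d_n \leq \frac{n d_p}{p}$ for $p < n$, together with the trivial case $p = n$, is precisely the bookkeeping the authors leave to the reader.
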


\begin{cor} \label{cor4.12}
If $d_n = nd_1$, then $\gamma_n = \gamma_n'$.
\end{cor}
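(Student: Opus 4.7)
The plan is as follows. The inequality $\gamma_n \le \gamma_n'$ is the second assertion of Lemma \ref{lemma2.2} (applied with $p=1$, which also records $\gamma_n' \le \gamma_1' = \gamma_1$), so I only need to establish $\gamma_n \ge \gamma_n'$. First I would unpack the hypothesis: combining $d_n = nd_1$ with the subadditivity $d_p + d_{n-p} \ge d_n$ (from Lemma \ref{lem4.1}(c), used in reverse) and the upper bound $d_p \le p d_1$ forces $d_p = p d_1$ for every $p \le n$. In particular $d_p/p = d_n/n$ for all $p < n$, so the hypothesis of Corollary \ref{lem5.3} is satisfied.

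Next I would pick any $E$ contributing to $\gamma_n$, i.e.\ semistable of rank $n$ with $\mu(E) \le g-1$ and $h^0(E) \ge n+1$. The contrapositive of Corollary \ref{lem5.3} forces $\deg E \ge d_n = n d_1$, hence $\mu(E) \ge d_1$. I then split into two cases. If $h^0(E) \ge 2n$, then $E$ already contributes to $\gamma_n'$, so $\gamma(E) \ge \gamma_n'$ by definition. If instead $h^0(E) \le 2n - 1$, a direct estimate gives
\[
\gamma(E) = \mu(E) + 2 - \frac{2 h^0(E)}{n} \;\ge\; d_1 + 2 - \frac{2(2n-1)}{n} \;=\; \gamma_1 + \frac{2}{n} \;>\; \gamma_1 \;\ge\; \gamma_n',
\]
using $\gamma_1 = d_1 - 2$ and the bound on $\gamma_n'$ recorded above. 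In either case $\gamma(E) \ge \gamma_n'$, and taking the infimum over such $E$ yields $\gamma_n \ge \gamma_n'$, completing the proof.

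I do not foresee any real obstacle here: once Corollary \ref{lem5.3} provides the lower bound $\mu(E) \ge d_1$ for every $E$ contributing to $\gamma_n$, the dichotomy on $h^0(E)$ is completely mechanical. The only point requiring a moment's care is verifying that the assumption $d_n = n d_1$ propagates to $d_p = p d_1$ for all smaller $p$, which is what unlocks the application of Corollary \ref{lem5.3}; this is the content of (the proof of) Lemma \ref{lem4.3} and follows essentially at once from subadditivity.
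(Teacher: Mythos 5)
Your proof is correct and follows essentially the same route as the paper's: reduce the hypothesis to $d_p = pd_1$ for all $p \le n$ via subadditivity, apply Corollary \ref{lem5.3} to force $\deg E \ge nd_1$ for any $E$ contributing to $\gamma_n$, and dispose of the case $h^0(E) \le 2n-1$ by the estimate $\gamma(E) \ge d_1 - 2 + \frac{2}{n} > \gamma_1 \ge \gamma_n'$. One small correction: $\gamma_1 = d_1 - 2$ is not an equality in general (for a smooth plane curve $\gamma_1 = d_2 - 4 < d_1 - 2$); Lemma \ref{lem4.4} gives only $\gamma_1 \le d_1 - 2$, but that is exactly the direction your chain of inequalities needs, so the argument stands.
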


\begin{proof}
If $d_n = nd_1$, then, as in the proof of Lemma \ref{lem4.3} we have $d_p = pd_1$ for all $p \leq n$. 

Now suppose that $E$ contributes to $\gamma_n$, but not to $\gamma_n'$. Then
$d \geq d_n$ by Corollary \ref{lem5.3} and $h^0(E) \leq 2n-1$. So
$$
\gamma(E) \geq \frac{1}{n}\big(nd_1 - 2(n-1)\big) = d_1 - 2 + \frac{2}{n} > \gamma_1.
$$
Hence 
$$
\gamma_n \geq \min \{ \gamma_n',\gamma_1 \} = \gamma_n'.
$$
It follows from Lemma \ref{lemma2.2} that $\gamma_n = \gamma_n'$.
\end{proof}

\begin{rem} \label{rem4.13}
\emph{The assumption $d_n = nd_1$ is valid for hyperelliptic curves of genus $g \geq n$, trigonal 
curves of genus $g \geq 3n+1$ and general quadrigonal curves of genus $g \geq 4n+1$ (see Remark \ref{rem4.3}). 
For hyperelliptic and trigonal curves we already have that $\gamma_n = \gamma_n' = \gamma_1$ by Proposition \ref{prop3.2} (a).
For quadrigonal curves we have $\gamma_n' = \gamma_1 = 2$ by Proposition \ref{prop3.7}; also $\gamma_n = 2$ for $g \geq n+4$ by 
Lemma \ref{lemma2.2} and Theorem \ref{thm3.5} (e). The corollary also applies to general 
$k$-gonal curves of genus $g \geq 2(k-2)n + 4$.}
\end{rem}

\begin{theorem} \label{pr4.9}
Let $E$ 
be a semistable bundle of rank $n$ and degree $d_n$. 

\emph{(a):} If $\frac{d_p}{p} \geq \frac{d_{p+1}}{p+1}$ for all $p < n$ and $d_n \neq nd_1$, then
$$
h^0(E) \leq n+1
$$
and there exist semistable bundles of rank $n$ and degree $d_n$ with $h^0 = n+1$.

\emph{(b):} If $d_n = nd_1$, then 
$$
h^0(E) \leq 2n
$$
and there exist semistable bundles of rank $n$ and degree $d_n$ with $h^0 = 2n$.

\emph{(c):} If $\frac{d_p}{p} \geq \frac{d_n}{n}$ for all $p < n$ and $E$ is stable, then 
$$
h^0(E) \leq n+1.
$$
\end{theorem}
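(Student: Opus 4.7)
For part (a), I would proceed by contradiction. Assume $h^0(E)\ge n+2$. Since $d_{2n}>d_n=\deg E$, Lemma~\ref{lempr} forces $E$ to possess a proper subbundle with $h^0>\rk$; pick one, $N$, of minimal rank $p<n$. Minimality means every proper subbundle of $N$ satisfies $h^0\le\rk$, so Lemma~\ref{lempr} applies to $N$ itself, yielding $\deg N\ge d_{ps}$ with $s=h^0(N)-p\ge 1$. Semistability gives $\deg N\le p\,d_n/n$, and the gonality hypothesis gives $d_q/q\ge d_n/n$ for all $q<n$. A short case split (using $d_{ps}>d_n$ when $ps>n$, and the monotonicity hypothesis when $ps\le n$) pins down $s=1$, $\deg N=d_p$, and $d_p/p=d_n/n$; in particular $\mu(N)=\mu(E)$.

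Next, $E/N$ is semistable (quotient of a semistable bundle by a subbundle of equal slope) of rank $n-p$ and degree $(n-p)d_n/n$, and subadditivity of $h^0$ in short exact sequences gives $h^0(E/N)\ge (n-p)+1$. The gonality hypothesis transfers to rank $n-p$, so the contrapositive of Corollary~\ref{lem5.3} applied to $E/N$ gives $\deg(E/N)\ge d_{n-p}$; combined with $d_{n-p}\ge(n-p)d_n/n$ from the hypothesis, equality holds, so $d_p+d_{n-p}=d_n$. Lemma~\ref{lem4.3} then forces $d_n=nd_1$, contradicting the hypothesis. For existence, the dual span $E_L$ attached to a line bundle $L$ computing $d_n$ is semistable by Proposition~\ref{prop5.1}(d), has the required rank and degree, and satisfies $h^0(E_L)\ge h^0(L)=n+1$, with equality forced by the upper bound.

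For part (b), the equality $d_n=nd_1$ combined with Lemma~\ref{lem4.1}(c) gives $d_1+d_{n-1}=d_n$, so Lemma~\ref{lem4.3} yields $d_p=pd_1$ for all $p\le n$. I then induct on $n$, the base case $n=1$ being the definition of $d_2$. Assuming $h^0(E)\ge 2n+1$, the same Paranjape--Ramanan machinery (now with the linear estimates $d_p=pd_1$) produces a minimal subbundle $N$ of rank $p<n$ with $h^0(N)=p+1$ and $\deg N=d_p$. Then $E/N$ is semistable of rank $n-p<n$ and degree $d_{n-p}$, and $h^0(E/N)\ge (2n+1)-(p+1)=2(n-p)+p\ge 2(n-p)+1$, contradicting the inductive hypothesis applied to $E/N$. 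Existence is realized by $E=L_1\oplus\cdots\oplus L_n$ with each $L_i$ of degree $d_1$ and $h^0(L_i)\ge 2$.

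Part (c) is handled by the same opening argument as (a): if $h^0(E)\ge n+2$, one produces a proper subbundle $N\subset E$ with $\mu(N)=\mu(E)$, which is incompatible with the stability of $E$. The main obstacle throughout is the case analysis that pins down $s=1$ in the Paranjape--Ramanan bound for the minimal subbundle $N$; once this is in hand, the rest follows quickly from Lemma~\ref{lem4.3}, Corollary~\ref{lem5.3} and Proposition~\ref{prop5.1}.
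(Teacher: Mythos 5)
Your proposal is correct and follows essentially the same route as the paper: the Paranjape--Ramanan lemma (Lemma \ref{lempr}) to produce a minimal subbundle $N$ with $\mu(N)=\mu(E)$, $h^0(N)=p+1$ and $\deg N=d_p$, then Corollary \ref{lem5.3} together with Lemma \ref{lem4.3} applied to the semistable quotient $E/N$ for (a), induction on the rank for (b), strictness of the slope inequality for (c), and the dual span construction respectively $\oplus L_i$ for existence. The only difference is cosmetic: you phrase (a) as a contradiction ending in $d_p+d_{n-p}=d_n$, where the paper bounds $h^0(E/N)\le n-p$ directly from $d_n-d_p<d_{n-p}$.
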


\begin{proof}
(a): Suppose $h^0(E) = n+s$ with $s \geq 2$. 
If $E$ possesses no subbundle $N$ of rank $p < n$ with $h^0(N) \geq p+1$, 
then by Lemma \ref{lempr},
\begin{equation} \label{eqn3}
\deg E \geq d_{ns} > d_{n},
\end{equation}
which is a contradiction.

Now let $N$ be a subbundle of minimal rank $p$ such that $h^0(N) \geq p+1$. 
Then Lemma \ref{lempr} implies that
$$
\deg N \geq d_{p(h^0(N) -p)}.
$$
Hence
\begin{equation} \label{eqn4}
\frac{\deg N}{p} \geq \frac{d_{p(h^0(N) -p)}}{p} \geq \frac{d_{p}}{p} \geq \frac{d_{n}}{n},
\end{equation}
which contradicts the semistability of $E$ unless all these inequalities are equalities. So $h^0(N) = p+1,\; \deg N = d_{p}$
and $\frac{d_{p}}{p} = \frac{d_{n}}{n}$, i.e. $\mu(N) = \mu(E)$.

It follows that $E/N$ is semistable of rank $n-p$ and degree $d_{n} - d_{p}$. Now $d_n - d_p < d_{n-p}$ by Lemma \ref{lem4.3}.
So $h^0(E/N) \leq n-p$ by Corollary \ref{lem5.3} and $h^0(E) \leq n+1$. 

To prove existence, let $L$ be a line bundle of degree $d_n$ with $h^0(L) = n+1$. Then 
by Proposition \ref{prop5.1}, $E_L$ is a semistable bundle of rank $n$ 
and degree $d_n$ with $h^0(E_L) \geq n+1$. This completes the proof of (a).

We prove (b) by induction, the case $n=1$ being obvious. If $h^0(E) \leq n+1$ the result is clear. So suppose
$h^0(E) = n+s$ with $s \geq 2$. Arguing as in the proof of (a) we obtain a proper subbundle $N$ of $E$ of rank $p$ and degree 
$d_p$ with $h^0(N) = p+1$. Moreover, $E/N$ is semistable of rank $n-p$ and degree $d_n-d_p$, where now $d_n-d_p = d_{n-p}$.
By inductive hypothesis, we have $h^0(E/N) \leq 2(n-p)$ and hence
\begin{equation} \label{equn4.5}
h^0(E) \leq 2(n-p) + p+1 = 2n - p + 1 \leq 2n.
\end{equation}

To prove existence, choose a line bundle $L$ of degree $d_1$ with $h^0(L) = 2$ and take $E = \oplus_{i=1}^n L$.
In fact, $E$ is the dual span of the line bundle $L^n$.

(c): If $E$ is stable, then \eqref{eqn3} and \eqref{eqn4} give contradictions. Hence $h^0(E) \leq n+1$.
\end{proof}

\begin{cor} \label{cor4.16}
Suppose $\frac{d_p}{p} \geq \frac{d_{p+1}}{p+1}$ for all $p < n$ and let $E$ be a semistable bundle of rank $n$ and degree 
$d \leq d_n$. Then Conjecture \ref{conj3.1} holds for $E$.
\end{cor}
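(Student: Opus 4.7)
The strategy is to split according to whether $\deg E<d_n$ or $\deg E=d_n$ and apply the $h^0$ bounds already established. The hypothesis yields, by chaining, $\frac{d_p}{p}\ge\frac{d_n}{n}$ for all $p<n$, so Corollary~\ref{lem5.3} applies whenever $\deg E<d_n$, and Theorem~\ref{pr4.9} controls the boundary case $\deg E=d_n$. Note also that the condition $\mu(E)\le 2g-4-\gamma_1$ appearing in Conjecture~\ref{conj3.1}(i) is automatic from $\mu(E)\le d_n/n\le g-1$ (Lemma~\ref{lem4.1}(d)) combined with the classical inequality $\gamma_1\le\lfloor(g-1)/2\rfloor\le g-3$, so no additional hypothesis is needed to ensure applicability of (i).

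When $\deg E<d_n$, Corollary~\ref{lem5.3} gives $h^0(E)\le n$, so both $\frac{d-\gamma_1 n}{2}+n\ge n$ (in the range $\mu(E)\ge\gamma_1+2$) and $\frac{d-n}{\gamma_1+1}+n\ge n$ (in the range $\mu(E)\ge 1$) are immediate. When $\deg E=d_n$ and $d_n\ne nd_1$, Theorem~\ref{pr4.9}(a) gives $h^0(E)\le n+1$, and to verify Conjecture~\ref{conj3.1} in this case it suffices to note that $d_n\ge n(\gamma_1+2)\ge\gamma_1 n+2$ handles part (i), while $d_n\ge n+\gamma_1+1$ (which follows from Lemma~\ref{lem4.4} and $\gamma_1\le g-2$) handles part (ii).

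The most delicate case is $\deg E=d_n=nd_1$. Here Theorem~\ref{pr4.9}(b) yields $h^0(E)\le 2n$ and $\mu(E)=d_1$, with $d_1\ge\gamma_1+2$ by Lemma~\ref{lem4.4}. If $d_1=\gamma_1+2$, then evaluating the right-hand sides of (i) and (ii) at $\mu(E)=\gamma_1+2$ both give precisely $h^0(E)\le 2n$, matching Theorem~\ref{pr4.9}(b) exactly. If $d_1>\gamma_1+2$ then only (i) applies, and the bound $\frac{n(d_1-\gamma_1)}{2}+n\ge\frac{5n}{2}$ easily exceeds $2n$ since $d_1-\gamma_1\ge 3$. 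The main obstacle is this sharp equality at $d_1=\gamma_1+2$: the conjecture is proved with no slack, and it is precisely the exact match between Theorem~\ref{pr4.9}(b) and Conjecture~\ref{conj3.1}(ii) that makes the statement work, reflecting the same phenomenon as Corollary~\ref{cor4.12}.
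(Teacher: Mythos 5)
Your proof is correct and takes essentially the same route as the paper: the same three-way split ($d<d_n$; $d=d_n\neq nd_1$; $d=d_n=nd_1$), the same appeals to Corollary \ref{lem5.3} and Theorem \ref{pr4.9}, and then the numerical verification against the bounds of Conjecture \ref{conj3.1}. You are in fact somewhat more thorough than the paper in checking both parts (i) and (ii) in every case and in observing that the upper range condition $\mu(E)\le 2g-4-\gamma_1$ is automatic.
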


\begin{proof}
If $d < d_n$, then $h^0(E) \leq n$ by Corollary \ref{lem5.3} 
in accordance with Conjecture \ref{conj3.1} (ii). 

If $d = d_n = nd_1$, then, since $d_1 \geq \gamma_1 + 2$, we have by Theorem \ref{pr4.9},
$$
h^0(E) \leq 2n \leq \frac{d-\gamma_1n}{2} + n
$$
in accordance with Conjecture \ref{conj3.1} (i).

If $d = d_n \neq nd_1$, then $h^0(E) \leq n+1$ by Theorem \ref{pr4.9}. Now
$$
\frac{d_n - n}{\gamma_1 + 1} \geq \frac{d_1 - 1}{\gamma_1 + 1} \geq 1,
$$
since $d_1 \geq \gamma_1 + 2$. So
$$
h^0(E) \leq \frac{d_n - n}{\gamma_1 + 1} + n
$$
in accordance with Conjecture \ref{conj3.1} (ii).
\end{proof}

For general $C$, we can prove a precise result on the stability of $E_L$.  This can be deduced from \cite[Theorem 2]{bu}; for the sake of completeness and since \cite{bu} is unpublished, we include a proof. 

\begin{prop} \label{cor5.2}
Suppose $C$ is general and $L$ is a line bundle computing $d_n$. Then $E_L$ is semistable and $h^0(E_L) = n+1$.
Moreover, $E_L$ is stable unless $n=g$ and $\det E_L$ is isomorphic to $K_C(p_1 + p_2)$ for some $p_1,p_2 \in C$. 
\end{prop}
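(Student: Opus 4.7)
My plan is to deduce both assertions from Theorem \ref{pr4.9}(a) and the sharper parts of Proposition \ref{prop5.1}, after verifying the necessary hypotheses on the gonality sequence of a general curve. By Remark \ref{rem4.2}(c), a general curve satisfies $d_r = g - \left[\frac{g}{r+1}\right] + r$ for every $r \geq 1$. A short calculation shows that $d_p/p \geq d_{p+1}/(p+1)$ for all $p$: clearing denominators reduces this to $g + p\left[\frac{g}{p+2}\right] \geq (p+1)\left[\frac{g}{p+1}\right]$, which is immediate from $(p+1)\left[\frac{g}{p+1}\right] \leq g$ and $p\left[\frac{g}{p+2}\right] \geq 0$. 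In particular $d_p/p \geq d_n/n$ for every $p < n$, so Proposition \ref{prop5.1}(d) gives semistability of $E_L$. A second elementary calculation with the same formula shows $nd_1 > d_n$ for every $n \geq 2$ (using $g \geq 4$), so the condition $d_n \neq nd_1$ required by Theorem \ref{pr4.9}(a) also holds.

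For the count $h^0(E_L) = n+1$, dualizing \eqref{eq5.1} gives $0 \to L^* \to H^0(L)^* \otimes \cO_C \to E_L \to 0$; since $\deg L^* = -d_n < 0$ we have $h^0(L^*) = 0$, and taking cohomology yields $h^0(E_L) \geq n+1$. The reverse inequality $h^0(E_L) \leq n+1$ is supplied by Theorem \ref{pr4.9}(a) applied to the semistable bundle $E_L$.

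To upgrade semistability to stability, Proposition \ref{prop5.1}(e) applies as soon as the inequality $d_p/p > d_n/n$ is strict for every $p < n$. Writing $q_r = \left[\frac{g}{r+1}\right]$, this strict inequality becomes $g(n-p) > nq_p - pq_n$. For $p \leq n-2$ it follows by crude estimation from $q_p \leq g/(p+1)$ and $q_n \geq 0$; for the tight case $p = n-1$ it reduces to a condition on $g \bmod n$ and $\left[\frac{g}{n+1}\right]$ which, after handling the regimes $n < g$, $n = g$, and $n > g$ separately, fails only when $n = g$, $p = g-1$ (where $d_{g-1}/(g-1) = d_g/g = 2$). Thus $E_L$ is stable whenever $n \neq g$.

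In the remaining case $n = g$, suppose $E_L$ is not stable. The destabilization argument in the proof of Proposition \ref{prop5.1}(d) then produces a short exact sequence $0 \to A \to E_L \to M \to 0$ of semistable bundles in which $M$ has rank $g-1$ and degree $2g-2$, is generated, and satisfies $h^0(M) \geq g$. By Mercat's uniqueness result \cite[Theorem 1]{m2}, any such $M$ is forced to be the dual span bundle $E_{K_C}$, which has $\det E_{K_C} \cong K_C$. Consequently $A$ is a line bundle of degree $d_g - d_{g-1} = 2$, so $A \cong \cO_C(p_1 + p_2)$ for some $p_1, p_2 \in C$, and $\det E_L \cong \det A \otimes \det M \cong K_C(p_1 + p_2)$, as required. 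The main obstacle in executing this plan is the explicit case analysis needed to isolate the single pair $(n,p) = (g, g-1)$ at which strict inequality in the gonality sequence fails.
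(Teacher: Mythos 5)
Your verification of the hypotheses of Proposition \ref{prop5.1}(d),(e) and Theorem \ref{pr4.9}(a) from the formula $d_r = g - \left[\frac{g}{r+1}\right] + r$ is correct, and for everything except the exceptional case your argument is essentially the paper's: semistability from Proposition \ref{prop5.1}(d), $h^0(E_L) = n+1$ from Theorem \ref{pr4.9}(a) once $d_n \neq nd_1$ is checked, and stability for $n \neq g$ from Proposition \ref{prop5.1}(e) after locating $(n,p)=(g,g-1)$ as the unique failure of strict inequality (the paper reduces the same inequality to $\frac{g}{p+1} \geq \frac{g+1}{n+1}$). Where you genuinely diverge is the case $n = g$: the paper simply quotes Butler's result \cite[Theorem 1.2]{b} to conclude stability when $\det E_L \not\simeq K_C(p_1+p_2)$, whereas you derive the exceptional determinant intrinsically from Mercat's uniqueness theorem \cite[Theorem 1]{m2}. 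That is an attractive, more self-contained route, and the identification of the destabilizing quotient as $E_{K_C}$ (rank $g-1$, degree $2g-2$, generated, $h^0 \geq g$) is justified by your observation that $d_q > 2q$ for $q < g-1$ on a general curve.

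There is, however, one genuine gap in that final step. From $0 \to A \to E_L \to E_{K_C} \to 0$ you get that $A$ is a line bundle of degree $2$ with $A \otimes K_C \cong \det E_L$, but you then assert $A \cong \cO_C(p_1+p_2)$ purely from $\deg A = 2$. On a curve of genus $g \geq 3$ a general line bundle of degree $2$ has no sections, so effectivity of $A$ is exactly the content of the exceptional condition $\det E_L \cong K_C(p_1+p_2)$ and cannot be taken for granted. The gap is easily closed: taking cohomology of the exact sequence gives $h^0(A) \geq h^0(E_L) - h^0(E_{K_C}) = (g+1) - g = 1$ (using $h^0(E_{K_C}) = g$, which holds since $C$ is non-hyperelliptic), so $A$ is effective and the conclusion follows. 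You should also note that Mercat's theorem is applied in the paper only under the hypothesis $\gamma_1 \geq 2$, so the low-genus case $g=4$ (where a general curve has $\gamma_1 = 1$) deserves a word of justification if you wish to avoid Butler's theorem entirely.
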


\begin{proof}
For $n > g$ stability is proved in Proposition \ref{prop5.1} (c). So suppose $n \leq g$. 
According to Proposition \ref{prop5.1}(d) 
and Remark \ref{rem4.2} (c),
in order to prove semistability it suffices to show that 
$$
\frac{1}{p} \left( g - \left[\frac{g}{p+1} \right] \right) \geq \frac{1}{n} \left( g - \left[\frac{g}{n+1}\right] \right).
$$
This is satisfied if 
$$
\frac{1}{p} \left(g - \frac{g}{p+1} \right) \geq \frac{1}{n} \left(g - \frac{g-n}{n+1} \right)
$$
which is equivalent to 
\begin{equation} \label{eq3}
\frac{g}{p+1} \geq \frac{g+1}{n+1}.
\end{equation}
This is true for $n \leq g$. For $n < g$ the same proof shows that \eqref{eq3} is true with strict inequality. 
Since $d_n \neq nd_1$, Theorem \ref{pr4.9} shows that $h^0(E_L) = n+1$.

When $n=g$ and $\det E_L \not \simeq K_C(p_1+p_2)$, it follows directly from 
\cite[Theorem 1.2]{b} that $E_L$ is stable.
\end{proof}

\begin{rem} \label{rem4.11}
\emph{Suppose $C$ is hyperelliptic and $n \leq g-1$. Then by Remark \ref{rem4.3} (a), $d_n = 2n$. If $H$ is the 
hyperelliptic line bundle, then 
$$
E = \bigoplus_{i=1}^n H
$$
is semistable of degree $d_n$ with $h^0(E) = 2n$. Moreover, $E$ is generated, so we can choose a subspace $V$ of $H^0(E)$
of dimension $n+1$ which generates $E$, giving an exact sequence
$$
0 \ra \det E^* \ra V \otimes \cO_C \ra E \ra 0.
$$
Noting that $\det E \simeq H^n$ and dualizing, we obtain
\begin{equation} \label{eq4.6}
0 \ra E^* \ra V^* \otimes \cO_C \ra H^n \ra 0.
\end{equation}
Now $\deg H^n = d_n$. So $h^0(H^n) = n+1$ and \eqref{eq4.6} is the evaluation sequence of $H^n$. Thus 
$$
E \simeq E_{H^n}.
$$
Moreover $H^n$ is the unique line bundle of degree $d_n$ with $h^0 \geq n+1$. 
So if $2 \leq n \leq g-1$, the bundle $E_L$ constructed in \eqref{eq5.1} can never be stable.
}
\end{rem}

\begin{rem} \label{rem4.12}
\emph{If $C$ is a trigonal curve and $n \leq \left[ \frac{g-1}{3} \right]$, 
then $d_n = 3n$ by Remark \ref{rem4.3} (b). 
So we can use a similar argument to that of Remark \ref{rem4.11}, by replacing $H$ by the unique 
line bundle $T$ of degree 3 computing 
$d_1$. Then
$$
E = \bigoplus_{i=1}^n T
$$
is semistable of degree $d_n$ with $h^0(E) = 2n$.
Moreover, if $n < \frac{g-1}{3}$, then $T^n$ is the unique line bundle of degree $d_n$ 
with $h^0 \geq n+1$ by \cite[Proposition 1]{ms}. So if $2 \leq n < \frac{g-1}{3}$, then 
the bundle $E_L$ constructed in \eqref{eq5.1} can never be stable. Note that we need $g \geq 8$ in order to allow $n \geq 2$.
}
\end{rem}

\begin{rem} 
\emph{Similarly, if $C$ is a general $k$-gonal curve ($k \geq 4$) of genus $g \geq 4k-4 \;(g \geq 9$ if $k=4)$ 
and $n \leq \frac{1}{k-2} \left[ \frac{g-4}{2} \right]$ $(n \leq [\frac{g-1}{4} ]$ if $k = 4)$, then by
Remark \ref{rem4.3} (c) we have $d_n = kn$. Let $Q$ be a line bundle of degree $k$ with $h^0(Q) = 2$. Then
$$
E = \bigoplus_{i=1}^n Q
$$
is semistable of degree $d_n$ with $h^0(E) = 2n$. For $n \geq 2$, $E$ is strictly semistable.} 

\emph{Note that $Q$ is unique for $g \geq (k-1)^2 + 1$. This follows 
from the fact that a curve of type $(a,b)$ on a smooth quadric surface is of arithmetic genus $(a-1)(b-1)$.}

\emph{When $k=4$ and $g \geq 11$, $Q^2$ is the unique line bundle of degree 8 with $h^0 \geq 3$ by \cite[Theorem 3.2]{cm}. 
So for a general quadrigonal curve of genus $g \geq 11$, there do not exist stable bundles of the form $E_L$ with $L$ 
of degree 8 and $h^0(L) = 3$.
We do not know whether in other cases $Q^n$
is the unique line bundle of degree $d_n$ with $h^0 \geq n+1$. So it is possible that there could exist stable bundles of 
the form $E_L$.}
\end{rem}

The next theorem improves the results of Theorem \ref{thm3.5}.

\begin{theorem} \label{thm4.14}
Let $C$ be a curve with Clifford index $\gamma_1 \geq 2$. \\
\emph{(a):} If $n=g$, then 
$$
\gamma_n = 2 - \frac{2}{g};
$$
\emph{(b):} if $n = g-2$, then
$$
\gamma_n = 2 - \frac{1}{g-2};
$$ 
\emph{(c):} if $n=g-3$, then
$$
\gamma_n = 2.
$$
\end{theorem}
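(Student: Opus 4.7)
The plan is to prove matching upper and lower bounds on $\gamma_n$ in each of (a), (b), (c). The upper bounds come from dual span bundles $E_L$ for carefully chosen line bundles $L$, while the lower bounds are either direct consequences of Theorem \ref{thm3.5} or require short Paranjape--Ramanan arguments to rule out a single extremal candidate.

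For the upper bounds, I take: in (a), $L$ any non-special line bundle of degree $2g = d_g$, so that $h^0(L) = g+1$, and Proposition \ref{prop5.1}(c) gives $E_L$ semistable of rank $g$ with $\gamma(E_L) \leq 2 - 2/g$; in (b), $L = K_C(-p)$ for some $p \in C$, so that $\deg L = 2g - 3 = d_{g-2}$ and (since $\gamma_1 \geq 2$ implies $C$ non-hyperelliptic) $h^0(L) = g - 1$ and $L$ is generated; in (c), $L = K_C(-p-q)$, so that $\deg L = 2g - 4 = d_{g-3}$ and (since $\gamma_1 \geq 2$ implies also non-trigonal) $h^0(L) = g - 2$ and $L$ is generated. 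To prove stability of $E_L$ in (b) and (c), I compare the evaluation sequences of $L$ and $K_C$ via the inclusion $L \hookrightarrow K_C$ and apply the snake lemma, obtaining a short exact sequence
$$
0 \to \cO_C(D) \to E_K \to E_L \to 0
$$
with $D = p$ in (b) and $D = p+q$ in (c). Any proper subbundle $F' \subsetneq E_L$ of rank $r$ lifts to the preimage $F \subseteq E_K$, which is automatically a subbundle because $E_K/F \cong E_L/F'$ is locally free; $F$ has rank $r+1$ and contains $\cO_C(D)$. The stability of $E_K$ due to Paranjape--Ramanan \cite{pr} gives $\mu(F) < 2$, hence $\mu(F') = (\deg F - \deg D)/r < 2 < \mu(E_L)$, so $E_L$ is stable. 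Since $h^0(E_L) \geq h^0(L)$, this yields the claimed upper bounds.

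The lower bounds in (a) for $g \geq 6$ and in (b) follow immediately from Theorem \ref{thm3.5}(b) and (d). The two remaining cases each reduce to excluding a single extremal bundle. For (a) with $g = 5$, the candidate is a semistable $E$ of rank $5$, degree $11$, with $h^0(E) = 7$. If $E$ has no subbundle $N$ with $h^0(N) > \rk N$, Lemma \ref{lempr} forces $\deg E \geq d_{10} = 15$, a contradiction. Otherwise, a minimal such subbundle $N$ has rank $p$ with $\deg N \geq d_p$ and, by semistability, $d_p/p \leq 11/5$. On a genus-$5$ curve with $\gamma_1 = 2$ one computes $d_1 = 4$, $d_2 = 6$, $d_3 = 7$, $d_4 = 8$, which eliminates $p = 1, 2, 3$ immediately; for $p = 4$, Clifford's theorem forces $\det N \cong K_C$, whence $E/N$ is a line bundle of degree $3$ with $h^0 \geq 2$, forcing $C$ to be trigonal and contradicting $\gamma_1 = 2$. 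For (c), the candidate is a semistable $E$ of rank $g-3$, degree $2g-5$, with $h^0(E) = g-2$. Lemma \ref{lempr} again forces either $\deg E \geq d_{g-3} = 2g - 4$ (contradiction) or a minimal subbundle $N$ of rank $p \leq g-4$. Clifford combined with $\gamma_1 \geq 2$ gives $d_p \geq 2p + 1$, so $d_p/p \geq 2 + 1/(g-4) > (2g-5)/(g-3) = \mu(E)$, violating semistability.

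The main obstacle is the stability analysis for $E_L$ in (b) and (c). Once the short exact sequence $0 \to \cO_C(D) \to E_K \to E_L \to 0$ is in hand, stability of $E_L$ reduces cleanly to the Paranjape--Ramanan theorem for $E_K$, but one must verify that preimages in $E_K$ of subbundles of $E_L$ are themselves saturated before applying that stability. The remaining elimination arguments via Lemma \ref{lempr} are straightforward but require careful bookkeeping in the $g = 5$ case of (a).
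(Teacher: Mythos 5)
Your overall architecture (upper bounds from explicit bundles, lower bounds from Theorem \ref{thm3.5} plus a Paranjape--Ramanan elimination of one extremal case) is sound, and most of the individual steps check out: the reduction of (a) to the single candidate $(n,d,h^0)=(5,11,7)$ and its elimination via the gonality sequence $d_1=4,d_2=6,d_3=7,d_4=8$ of a genus-$5$ curve with $\gamma_1=2$ is correct (and a little slicker than the paper's route through elementary transformations and Theorem \ref{pr4.9}); the lower bound in (c) via $d_p\ge 2p+1$ is correct; and your derivation of stability of $E_{K_C(-p)}$ in (b) from the sequence $0\to\cO_C(p)\to E_{K_C}\to E_{K_C(-p)}\to 0$ and stability of $E_{K_C}$ is a valid alternative to the paper's argument, which instead verifies the slope hypothesis of Proposition \ref{prop5.1}(d) using Lemma \ref{lem4.4}. (Two small points there: you need the integrality step $\deg F\le 2r+1$, hence $\mu(F')\le 2<\mu(E_L)$, since $\mu(F)<2$ alone does not give $(\deg F-1)/r<2$; and stability of $E_{K_C}$ for $\gamma_1\ge 2$ is most cleanly obtained from Proposition \ref{prop5.1}(e), since $d_p>2p$ for $p<g-1$.)

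There is, however, a genuine error in your upper bound for (c). The claimed sequence $0\to\cO_C(p+q)\to E_{K_C}\to E_{K_C(-p-q)}\to 0$ cannot exist: $E_{K_C}$ has rank $h^0(K_C)-1=g-1$ while $E_{K_C(-p-q)}$ has rank $h^0(K_C(-p-q))-1=g-3$, so the kernel has rank $2$ (the snake lemma in fact gives kernel $\cO_C(p)\oplus\cO_C(q)$ of degree $2$, not a line bundle of degree $2$). With the correct rank-$2$ kernel, the preimage $F$ of a rank-$r$ subbundle $F'\subset E_L$ has rank $r+2$ and degree $\deg F'+2$, and stability of $E_{K_C}$ only yields $\deg F'\le 2r+1$, i.e.\ $\mu(F')\le 2+\frac1r$; since $\mu(E_L)=2+\frac{2}{g-3}$, this fails to rule out destabilizing subbundles of rank $r<\frac{g-3}{2}$. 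So your argument does not establish semistability of $E_{K_C(-p-q)}$. The conclusion you need is still true and is available from results you could quote: either Lemma \ref{lem2.1} directly gives $\gamma_{g-3}\le\frac{1}{g-3}(2g-6)=2$, or one applies Proposition \ref{prop5.1}(d) to a line bundle computing $d_{g-3}=2g-4$, the hypothesis $\frac{d_p}{p}\ge 2+\frac{2}{g-3}$ for $p<g-3$ following from $d_p\ge 2p+2$ (Lemma \ref{lem4.4} with $\gamma_1\ge 2$). Either replacement closes the gap; the elementary-modification trick, as stated, does not.
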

\begin{proof}
Note first that $d_g = 2g,\; d_{g-1} = 2g-2, \; d_{g-2} = 2g-3$ for  $\gamma_1 \geq 1$ and $d_{g-3} = 2g-4$ for $\gamma_1 \geq 2$.

(a): According to Theorem \ref{thm3.5} (b) we need only show that $\gamma_n \geq \frac{8}{5}$ if $n = g = 5$. This will
follow as in the proof of Theorem \ref{thm3.5} (b) if we can show that there is no semistable bundle $E$ of 
rank 5 and degree $2n+1 = 11$ with $h^0(E) \geq 7$.

Any such bundle is necessarily stable and if $F$ is an elementary transformation of $E$, then $\deg F = 10$ and $F$ is
semistable. Now $d_5 = 10 \neq 5d_1$ and one can easily check that the conditions of Theorem \ref{pr4.9} (a) hold. So $h^0(F) \leq 6$. 
Since this holds for any elementary transformation of $E$, it follows that $E$ is generated with $h^0(E) = 7$. Now, 
following through the proof of Theorem \ref{pr4.9}, we see that \eqref{eqn3} gives $\deg E \geq d_{10}$, a contradiction. 
So there must exist a subbundle $N$ of $E$ of rank $p < n$ which is minimal with respect to the condition $h^0(N) > \rk N$.
In this case we have $\deg N \geq d_p$ and, by stability of $E$, $\frac{\deg N}{p} < \mu(E) = \frac{11}{5}$. 
The only possibility for this is $p = 4, \; \deg N = d_4 = 8$. It is easy to check that $N$ must be semistable. 
So Theorem \ref{pr4.9} (a) gives $h^0(N) \leq 5$. It follows that $h^0(E/N) \geq 2$. So $\deg (E/N) \geq d_1 = 4$. Hence 
$\deg E \geq 12$, a contradiction.

(b): For $n= g-2$, we need to show that there exists a semistable bundle $E$ of rank $g-2$ and degree $2g-3$ with $h^0(E) \geq g-1$
(see the proof of Theorem \ref{thm3.5} and in particular the inequality \eqref{eq3.5}). Since 
$\frac{d_{g-2}}{g-2} = \frac{2g-3}{g-2} = 2 + \frac{1}{g-2}$ and $\frac{d_p}{p} > 2$ for all $p < g-2$ by Lemma \ref{lem4.4},
the hypotheses of Proposition \ref{prop5.1} (d) hold and we can take $E = E_L$ with $L$ a line bundle of degree $d_{g-2}$
with $h^0(L) = g-1$.   

(c): For $n = g-3$, we again consider the proof of Theorem \ref{thm3.5}. We need to show that there is no semistable bundle 
$E$ of rank $g-3$ and degree $2n+1 = 2g-5$ with $h^0(E) \geq g-2$. Since $\gamma_1 \geq 2$, Lemma \ref{lem4.4} implies that
$d_p \geq 2p+2$ for all $p < g-3$. So $\frac{d_p}{p} > \frac{d_{g-3}}{g-3} = 2 + \frac{2}{g-3}$. Hence the conditions of Corollary
\ref{lem5.3} apply, giving $h^0(E) \leq g-3$. 
\end{proof}

\begin{cor} \label{cor4.15}
If $\gamma_1 = 2$, then for all $n \geq 1$,
$$
\gamma_n = \left\{ \begin{array}{ccc}
                   1+ \frac{g-2}{n} & & n>g,\\
                   2 - \frac{2}{g} && n = g,\\
                   2-\frac{2}{g-1} & \text{if} & n= g-1,\\
                   2 - \frac{1}{g-2} && n = g-2,\\
                   2 && n \leq g-3.
                   \end{array}  \right.
$$
In particular $\gamma_2 = 2$. 
\end{cor}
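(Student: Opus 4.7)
The proof will be essentially a bookkeeping exercise, assembling results already proved in the preceding sections of the paper. The plan is as follows.

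First, I would observe that the case $n > g$ is immediate from Theorem \ref{thm3.5}(a), the case $n = g-1$ is immediate from Theorem \ref{thm3.5}(c), and the cases $n = g$, $n = g-2$, $n = g-3$ are precisely the content of Theorem \ref{thm4.14}(a), (b), (c) respectively. So the only remaining range to handle is $n \leq g-4$, where I must show $\gamma_n = 2$.

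For this range, the lower bound $\gamma_n \geq 2$ is given by Theorem \ref{thm3.5}(e). For the matching upper bound, I would invoke Lemma \ref{lemma2.2} with $p = 1$: since $1 \mid n$, we have $\gamma_n \leq \gamma_1 = 2$. Combining these yields $\gamma_n = 2$ for $n \leq g-4$.

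For the final assertion $\gamma_2 = 2$, note that $\gamma_1 = 2$ forces $g \geq 5$ (for $g = 4$ one has $\gamma_1 \leq 1$ by Brill--Noether). If $g \geq 6$, then $2 \leq g-4$ and the formula just established gives $\gamma_2 = 2$. If $g = 5$, then $2 = g-3$ and Theorem \ref{thm4.14}(c) gives $\gamma_2 = 2$. Either way, $\gamma_2 = 2$.

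I do not anticipate any real obstacle here, since no new argument is required; the only point to watch is the small-genus boundary case $g = 5$ for the ``in particular'' claim, which is handled by noting that $n = 2$ falls into the $n = g - 3$ case covered by Theorem \ref{thm4.14}(c) rather than the generic $n \leq g - 4$ case.
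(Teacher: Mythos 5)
Your proposal is correct and matches the paper's own argument, which likewise assembles Theorems \ref{thm3.5} and \ref{thm4.14} with Lemma \ref{lemma2.2} (using $p=1$ for the upper bound $\gamma_n\le\gamma_1=2$) and notes for the final assertion that $\gamma_1=2$ forces $g\ge5$, so $2\le g-3$. Your slightly more explicit handling of the $g=5$ boundary for $\gamma_2$ is just an expansion of the same observation.
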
    
\begin{proof}
This follows from Theorems \ref{thm3.5} and \ref{thm4.14} and Lemma \ref{lemma2.2}. For the last part, note that $\gamma_1 = 2$ 
implies that $g \geq 5$; so $2 \leq g-3$.
\end{proof}

\begin{rem} \label{rem4.25}
\emph{Corollary \ref{cor4.15} applies in particular to any quadrigonal curve. If $C$ is bielliptic, rather more is known. 
By \cite{ba} (see also \cite[Theorem 2 and Lemma 3]{m}), there exist semistable bundles $E$ of any rank $n$ and degree $d \geq 2$
with $h^0(E) = [\frac{d}{2}]$; in other words,
$$
\gamma(E) = \frac{1}{n} \left(d - 2 \left[ \frac{d}{2} \right] + 2n \right) = \left\{ \begin{array}{ccc}
                                                                                     2 & \mbox{for} & d \; \mbox{even},\\
                                                                                     2 + \frac{1}{n} & \mbox{for} & d \; \mbox{odd}.
                                                                                     \end{array}  \right.
$$ 
If $2(n+1) \leq d \leq n(g-1)$, these bundles contribute to $\gamma_n$ and, if $4n \leq d \leq n(g-1)$, they contribute to 
$\gamma_n'$. Since $\gamma_n' = 2$ by Lemma \ref{lemma2.2} and Proposition \ref{prop3.7}, there are many bundles on $C$ 
which compute $\gamma_n'$.}  
\end{rem}

The remaining results of this section will be useful in estimating the value of $\gamma_n$.              

\begin{lem} \label{lem4.10}
Suppose $p|n$ and $\frac{d_q}{q} \geq \frac{d_p}{p}$ for $q < p$. Then
$$
\gamma_n \leq \frac{1}{p}(d_p - 2).
$$
\end{lem}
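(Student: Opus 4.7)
My plan is to reduce to the case $n = p$ using Lemma \ref{lemma2.2}, and then construct a semistable bundle of rank $p$ achieving the desired value of $\gamma$ via the dual span construction.

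First, since $p \mid n$, Lemma \ref{lemma2.2} gives $\gamma_n \leq \gamma_p$, so it suffices to show that $\gamma_p \leq \frac{1}{p}(d_p - 2)$. To do this, I will exhibit a semistable bundle $E$ of rank $p$ with $\mu(E) \leq g-1$, $h^0(E) \geq p+1$, and $\gamma(E) \leq \frac{1}{p}(d_p - 2)$.

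The candidate is $E := E_L$, where $L$ is a line bundle on $C$ computing $d_p$. By Lemma \ref{lem4.1}(b), $h^0(L) = p+1$, so \eqref{eq5.1} yields a bundle of rank $p$ and degree $d_p$. The hypothesis $\frac{d_q}{q} \geq \frac{d_p}{p}$ for $q < p$ is exactly the condition in Proposition \ref{prop5.1}(d), so $E_L$ is semistable. Moreover $\mu(E_L) = \frac{d_p}{p} \leq g-1$ by Lemma \ref{lem4.1}(d). Dualizing \eqref{eq5.1} produces the exact sequence
$$
0 \ra L^* \ra H^0(L)^* \otimes \cO_C \ra E_L \ra 0,
$$
and since $\deg L = d_p > 0$ implies $h^0(L^*) = 0$, the long exact sequence in cohomology gives $h^0(E_L) \geq \dim H^0(L)^* = p+1$.

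Therefore $E_L$ contributes to $\gamma_p$ and
$$
\gamma(E_L) = \frac{1}{p}\bigl(d_p - 2(h^0(E_L) - p)\bigr) \leq \frac{1}{p}\bigl(d_p - 2\bigr),
$$
so $\gamma_p \leq \frac{1}{p}(d_p - 2)$ and hence $\gamma_n \leq \frac{1}{p}(d_p - 2)$. There is no serious obstacle here; the only point requiring care is verifying the semistability hypothesis of Proposition \ref{prop5.1}(d), which is built into our assumption.
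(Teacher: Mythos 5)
Your proof is correct and follows essentially the same route as the paper: the paper takes the semistable bundle $F=E_L$ of rank $p$ and degree $d_p$ with $h^0(F)\geq p+1$ furnished by Proposition \ref{prop5.1} and passes to the direct sum $\oplus_{i=1}^{n/p}F$, which is exactly the content of your appeal to Lemma \ref{lemma2.2}. Your explicit verification of $h^0(E_L)\geq p+1$ and $\mu(E_L)\leq g-1$ is a welcome bit of extra care but does not change the argument.
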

\begin{proof}
Let $F$ be a semistable bundle of rank $p$ and degree $d_p$ with $h^0(F) \geq p+1$, which exists by Proposition \ref{prop5.1}. 
Then, if $$E = \bigoplus_{i=1}^{\frac{n}{p}}F,$$ we have
$$
\gamma(E) = \gamma(F) \leq \frac{1}{p}(d_p - 2).
$$
The bundle $E$ contributes to $\gamma_n$, which gives the result. 
\end{proof}

\begin{prop} \label{prop4.11}
Suppose $E$ is a semistable bundle of rank $n \geq 2$.

\emph{(a)} If $\frac{d_p}{p} \geq \frac{d_n}{n}$ for all $p < n$ and $h^0(E) = n+ 1$, then 
$$
\gamma(E) \geq \frac{1}{n}(d_n -2).
$$

\emph{(b)} If $h^0(E) \geq n+2$ and there exists no proper subbundle $N$ of $E$ with $h^0(N) > \emph{\rk} N$, then
$$
\gamma(E) \geq \frac{1}{n}(d_n - 2).
$$
If $n \geq 3$, this is a strict inequality.

\emph{(c)} If $h^0(E) \leq 2n -1$ and there exists a line subbundle $N$ of $E$ with $h^0(N) \geq 2$, then
$$
\gamma(E) > \gamma_1.
$$

\emph{(d)} Suppose $h^0(E) = n+s$ with $s \geq 1$ and there exists a subbundle $N$ of $E$ of rank $p \geq 2$
with $h^0(N) =p+t$, $t \geq 1 + \frac{2s}{n} - \frac{2}{p}$, and no subbundle $N'$ of rank $<p$ with $h^0(N') > \emph{rk} N'$. Then
$$
\gamma(E) \geq \frac{1}{p} (d_p -2).
$$
If further $\frac{d_p}{p} \geq \frac{d_n}{n}$ and $t \geq 1 + \frac{2}{n} (s-1)$, then
$$
\gamma(E) \geq \frac{1}{n} (d_n -2).
$$
\end{prop}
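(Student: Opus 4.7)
The four parts share a common template: apply Lemma \ref{lempr} either to $E$ itself or to a suitable subbundle $N$, then feed the resulting degree bound into the definition of $\gamma(E)$, using the elementary consequence $d_{r+k} \geq d_r + k$ of strict monotonicity (Lemma \ref{lem4.1}(a)) to convert $d_{ns}$ or $d_{pt}$ back into $d_n$ or $d_p$.

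Part (a) is immediate from Corollary \ref{lem5.3}: its hypotheses are exactly ours, and $h^0(E) = n+1 > n$ forces $\deg E \geq d_n$, whence $\gamma(E) = (\deg E - 2)/n \geq (d_n - 2)/n$.

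For (b), set $s := h^0(E) - n \geq 2$. The hypothesis on subbundles is precisely what Lemma \ref{lempr} needs, so $\deg E \geq d_{ns} \geq d_n + n(s-1)$. Substituting into $\gamma(E) = (\deg E - 2s)/n$ gives
$$\gamma(E) \geq \frac{1}{n}\bigl(d_n - 2 + (n-2)(s-1)\bigr),$$
which is the claimed bound; strictness reduces to $(n-2)(s-1) \geq 1$, i.e., to $n \geq 3$, since $s \geq 2$. Part (c) is shortest: any line subbundle $N$ with $h^0(N) \geq 2$ satisfies $\deg N \leq \mu(E) \leq g-1$ by semistability, so $N$ contributes to $\gamma_1$ and $\deg N \geq d_1 \geq \gamma_1 + 2$; combining $\mu(E) \geq \deg N \geq \gamma_1 + 2$ with $h^0(E) \leq 2n-1$ yields $\gamma(E) \geq \gamma_1 + 2/n > \gamma_1$.

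Part (d) is the delicate one. The key observation is that the minimality of $p$ in $E$ prevents $N$ from having any proper subbundle $N'$ with $h^0(N') > \rk N'$, since such an $N'$ would also sit in $E$ with rank $< p$, contradicting the hypothesis. Hence Lemma \ref{lempr} applies to $N$ and yields $\deg N \geq d_{pt}$; semistability of $E$ then gives $\mu(E) \geq \mu(N) \geq d_{pt}/p$, and $d_{pt} \geq d_p + p(t-1)$ converts this into
$$\gamma(E) \geq \frac{d_{pt}}{p} - \frac{2s}{n} \geq \frac{d_p}{p} + (t-1) - \frac{2s}{n}.$$
The first bound $\gamma(E) \geq (d_p - 2)/p$ is now equivalent to $t \geq 1 + \frac{2s}{n} - \frac{2}{p}$, which is precisely the standing hypothesis. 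For the improved bound $\gamma(E) \geq (d_n - 2)/n$, one replaces $d_p/p$ by the (by assumption smaller) $d_n/n$ and imposes the correspondingly stronger inequality $t \geq 1 + \frac{2(s-1)}{n}$. The one step requiring care is applying Lemma \ref{lempr} to $N$ rather than to $E$; once that observation is in place, the rest is routine arithmetic in the gonality sequence.
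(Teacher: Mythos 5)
Your proof is correct and follows the paper's argument essentially verbatim: (a) is the contrapositive of Corollary \ref{lem5.3}, (b) and (d) apply Lemma \ref{lempr} (to $E$, respectively to $N$ after the same minimality observation the paper makes) together with $d_{r+k}\ge d_r+k$, and (c) uses $\deg N\ge d_1\ge\gamma_1+2$ plus semistability. The only cosmetic blemish is that in (c) your intermediate claim $\mu(E)\le g-1$ is not among the stated hypotheses of the proposition; it is also not needed, since $\deg N\ge d_1\ge \gamma_1+2$ holds unconditionally by the definition of $d_1$ and Lemma \ref{lem4.4}.
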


\begin{proof}
(a): If $h^0(E) = n+1$, then by Corollary \ref{lem5.3}, $d \geq d_n$ which implies the assertion.

(b): Suppose $h^0(E) = n+s$ with $s \geq 2$. By Lemma \ref{lempr}, $\deg E \geq d_{ns}$ and
\begin{eqnarray*}
\gamma(E) &\geq &\frac{1}{n}(d_{ns} - 2s) \\
& \geq &\frac{1}{n}(d_n + ns -n - 2s)\\
& = & \frac{1}{n}\big(d_n +(n-2)s -n\big) \geq \frac{1}{n}(d_n - 2)
\end{eqnarray*}
and the last inequality is strict if $n \geq 3$.

(c):  By definition of $d_1$, $\deg N \geq d_1$. 
So by semistability, $\deg E \geq nd_1$ and hence
$$
\gamma(E) \geq \frac{1}{n}\big(nd_1 - 2(h^0(E)-n)\big) = d_1 - \frac{2(h^0(E)-n)}{n} > d_1 - 2 \geq \gamma_1.
$$

(d): Lemma \ref{lempr} gives $\deg N \geq d_{pt}$. By semistability,
$$
\deg E \geq \frac{n}{p} d_{pt} \geq \frac{n}{p}(d_p + pt -p).
$$
So 
$$
\gamma(E) \geq \frac{1}{p} (d_p +pt -p) - \frac{2s}{n}.
$$
The inequality $t \geq 1 + \frac{2s}{n} - \frac{2}{p}$ now gives 
$$
\gamma(E) \geq \frac{1}{p} (d_p -2).
$$
If $\frac{d_p}{p} \geq \frac{d_n}{n}$ and $t \geq 1 + \frac{2}{n}(s-1)$, we get
$$ \gamma(E) \geq \frac{d_p}{p} - \frac{2}{n} \geq \frac{d_n}{n} - \frac{2}{n},
$$
proving the second assertion.
\end{proof}

\begin{prop} \label{prop4.18}
Suppose $E$ is a semistable bundle of rank $n \geq 2$.
If $\frac{d_{n-1}}{n-1} \geq \frac{d_n}{n}$, $h^0(E) = n+2$ and there exists a subbundle $N$ of 
rank $n-1$ of $E$ with $h^0(N) = n$ and no 
subbundle $N'$ of $E$ of smaller rank with $h^0(N') > \emph{\rk} N'$, then
$$
\gamma(E) \geq \min \left\{ \gamma_1, \frac{1}{n}(d_n - 2) \right\}.
$$
\end{prop}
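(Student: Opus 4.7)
The plan is to exploit the short exact sequence $0 \to N \to E \to L \to 0$, where $L := E/N$ is a line bundle. From the cohomology sequence, $h^0(L) \geq h^0(E) - h^0(N) = 2$, so $\deg L \geq d_1$. Since $E$ has no subbundle of rank less than $n-1$ with more sections than rank, the same is true of $N$; hence Lemma~\ref{lempr} applied to $N$ (with $s = 1$) yields $\deg N \geq d_{n-1}$. Combined with Lemma~\ref{lem4.1}(c) this gives
$$
\deg E \;=\; \deg N + \deg L \;\geq\; d_{n-1} + d_1 \;\geq\; d_n.
$$
If $\deg E \geq d_n + 2$, then $\gamma(E) = (\deg E - 4)/n \geq (d_n - 2)/n$ and we are done, so it remains to show $\gamma(E) \geq \gamma_1$ in the cases $\deg E = d_n$ and $\deg E = d_n + 1$. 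Recall that $d_1 \leq g - 1$ (as $g \geq 4$), so a line bundle of degree $d_1$ with two sections contributes to $\gamma_1$ and $d_1 \geq \gamma_1 + 2$.

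In the case $\deg E = d_n$, all three inequalities above are equalities, so $d_{n-1} + d_1 = d_n$. By Lemma~\ref{lem4.3} this forces $d_n = n d_1$, and then $\gamma(E) = d_1 - 4/n \geq d_1 - 2 \geq \gamma_1$ for $n \geq 2$. In the case $\deg E = d_n + 1$, the analogous counting shows $d_{n-1} + d_1 \in \{d_n, d_n + 1\}$. If $d_{n-1} + d_1 = d_n$, then again Lemma~\ref{lem4.3} gives $d_n = n d_1$, and a direct computation yields $\gamma(E) = d_1 - 3/n \geq \gamma_1$, using $d_1 - \gamma_1 \geq 2 > 3/n$ for $n \geq 2$.

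The essential sub-case, and the only place where the hypothesis $d_{n-1}/(n-1) \geq d_n/n$ enters non-trivially, is $\deg E = d_n + 1$ with $d_{n-1} + d_1 = d_n + 1$. Here $\deg N = d_{n-1}$ and $\deg L = d_1$ are both forced. Substituting $d_{n-1} = d_n + 1 - d_1$ into the hypothesis $n d_{n-1} \geq (n-1) d_n$ gives $d_n \geq n(d_1 - 1) \geq n(\gamma_1 + 1)$. For $n \geq 3$ this yields $d_n \geq n\gamma_1 + 3$, so $\gamma(E) = (d_n - 3)/n \geq \gamma_1$. For $n = 2$ the constraint $d_1 + d_1 = d_2 + 1$ forces $d_2 = 2d_1 - 1$, and $\gamma(E) = (2d_1 - 4)/2 = d_1 - 2 \geq \gamma_1$ directly. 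The main obstacle is precisely isolating this near-tight sub-case and invoking the slope hypothesis to convert the equation $d_{n-1} + d_1 = d_n + 1$ into the lower bound $d_n \geq n(d_1 - 1)$ needed to beat $\gamma_1$.
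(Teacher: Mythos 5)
Your proof is correct. It opens exactly as the paper's does --- the sequence $0\to N\to E\to E/N\to 0$, Lemma \ref{lempr} giving $\deg N\ge d_{n-1}$, and $\deg(E/N)\ge d_1$ from $h^0(E/N)\ge 2$, hence $\deg E\ge d_{n-1}+d_1$ --- but the endgame is genuinely different. The paper finishes with a purely arithmetic dichotomy on the size of $d_{n-1}$: if $d_{n-1}\ge (n-1)d_1-2(n-2)$ then $\deg E\ge nd_1-2(n-2)$ and $\gamma(E)\ge d_1-2\ge\gamma_1$ (this automatically covers $n=2$); otherwise, for $n\ge3$, the slope hypothesis together with integrality forces $d_n\le d_{n-1}+d_1-2$, so $\deg E\ge d_n+2$ and $\gamma(E)\ge\frac{1}{n}(d_n-2)$. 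You instead split on $\deg E$ relative to $d_n+2$ and resolve the two tight values $\deg E\in\{d_n,\,d_n+1\}$ by invoking Lemma \ref{lem4.3}: when $d_{n-1}+d_1=d_n$ that lemma (which ultimately rests on Eisenbud's theorem on determinantal varieties) yields $d_n=nd_1$ and hence $\gamma(E)\ge\gamma_1$, while in the remaining sub-case $d_{n-1}+d_1=d_n+1$ you feed this identity into the slope hypothesis to get $d_n\ge n(d_1-1)\ge n(\gamma_1+1)$, which suffices for $n\ge3$, with $n=2$ checked directly. Both routes are valid and all your arithmetic checks out; the paper's version has the virtue of needing nothing beyond subadditivity of the gonality sequence and integrality, whereas yours imports the rigidity statement of Lemma \ref{lem4.3} --- heavier machinery, but with the side benefit of pinpointing exactly which configurations of the gonality sequence make the bound tight.
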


\begin{proof}
By Lemma \ref{lempr}, $\deg N \geq d_{n-1}$. 
On the other hand, the definition of $d_1$ implies that 
$\deg (E/N) \geq d_1$. Hence
$$
\deg E \geq d_{n-1} + d_1.
$$
If $d_{n-1} \geq (n-1)d_1 - 2(n-2)$, then $\deg E \geq n d_1 - 2(n-2)$ and hence
$$
\gamma(E) \geq \frac{1}{n}\big(nd_1 - 2(n-2) - 4\big) = d_1 - 2 \geq \gamma_1.
$$
This covers in particular the case $n=2$. If $n \geq 3$ and  
$$
d_{n-1} \leq (n-1)d_1 - 2(n-2) - 1,
$$ 
then
$$
d_n \leq \frac{n}{n-1} d_{n-1} = d_{n-1} + \frac{1}{n-1} d_{n-1} \leq d_{n-1} + d_1 - \frac{2n-3}{n-1},
$$
implying $d_n \leq d_{n-1} + d_1 -2$. So $\deg E \geq d_n + 2$ and hence 
$$
\gamma(E) \geq \frac{1}{n} (d_n - 2).
$$
\end{proof}

\section{Rank two}

The results of Section 4 allow us to obtain a precise formula for $\gamma_2$ and to improve the inequality for  $\gamma_2'$ obtained in Proposition \ref{prop3.6}. 

\begin{theorem} \label{prop4.7}
$\quad \gamma_2 = \min \left\{ \gamma_2', \frac{d_2}{2} - 1\right\}$.
\end{theorem}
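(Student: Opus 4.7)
The plan is to prove the two opposite inequalities $\gamma_2 \le \min\{\gamma_2', \tfrac{d_2}{2}-1\}$ and $\gamma_2 \ge \min\{\gamma_2', \tfrac{d_2}{2}-1\}$ separately, using the machinery already developed in Section~4 (especially the dual span construction and Corollary~\ref{lem5.3}).

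For the upper bound, $\gamma_2 \le \gamma_2'$ is immediate from Lemma~\ref{lemma2.2}. To see $\gamma_2 \le \tfrac{d_2}{2}-1$, I would invoke Lemma~\ref{lem4.10} with $n=p=2$; the hypothesis $d_1 \ge \tfrac{d_2}{2}$ needed there is automatic from Lemma~\ref{lem4.1}(c), which gives $d_2 \le 2d_1$. Explicitly, picking a line bundle $L$ computing $d_2$ and forming the dual span bundle $E_L$ of rank $2$ and degree $d_2$ via \eqref{eq5.1}, Proposition~\ref{prop5.1}(d) guarantees that $E_L$ is semistable, and $h^0(E_L) \ge 3 = n+1$, while $\mu(E_L) = d_2/2 \le g-1$ by Lemma~\ref{lem4.1}(d). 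Therefore $E_L$ contributes to $\gamma_2$, giving $\gamma_2 \le \gamma(E_L) = \tfrac{d_2}{2}-1$.

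For the lower bound, let $E$ be any semistable rank $2$ bundle contributing to $\gamma_2$, so $h^0(E) \ge 3$ and $\mu(E) \le g-1$. I split on the size of $h^0(E)$. If $h^0(E) \ge 4 = 2n$, then $E$ also contributes to $\gamma_2'$, so $\gamma(E) \ge \gamma_2'$. If $h^0(E) = 3$, I would apply Corollary~\ref{lem5.3} with $n=2$ (again using $d_1 \ge d_2/2$) to conclude that $\deg E \ge d_2$; hence
\[
\gamma(E) = \tfrac{1}{2}\bigl(\deg E - 2(h^0(E)-2)\bigr) \ge \tfrac{1}{2}(d_2 - 2) = \tfrac{d_2}{2}-1.
\]
In either case $\gamma(E) \ge \min\{\gamma_2', \tfrac{d_2}{2}-1\}$, yielding the required lower bound on $\gamma_2$.

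Combining the two bounds gives the asserted equality. There is no real obstacle here: the proof is essentially a direct application of Corollary~\ref{lem5.3} (on one side) and the dual span construction together with Lemma~\ref{lem4.10} (on the other), once the automatic inequality $d_1 \ge d_2/2$ from subadditivity of the gonality sequence is noted. The only point that requires minor care is the dichotomy $h^0(E)=3$ versus $h^0(E)\ge 4$ which makes the two defining conditions of $\gamma_2$ and $\gamma_2'$ fit together cleanly.
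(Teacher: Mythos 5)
Your proof is correct and follows essentially the same route as the paper: the lower bound via the dichotomy $h^0(E)=3$ versus $h^0(E)\ge 4$ together with Corollary \ref{lem5.3} is exactly the content of Proposition \ref{prop4.11}(a) (whose hypothesis $d_1\ge d_2/2$ is indeed automatic), which is what the paper cites, and the upper bound via the dual span bundle $E_L$ and Proposition \ref{prop5.1}(d) is likewise the paper's argument. No gaps.
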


\begin{proof}
The inequality $\gamma_2 \geq  \min \left\{ \gamma_2', \frac{d_2}{2} - 1\right\}$ is an immediate consequence of 
Proposition \ref{prop4.11} (a).
Moreover, by the definition of $\gamma_2'$ and Proposition \ref{prop5.1}, both inequalities can be equalities.
\end{proof}

\begin{theorem} \label{prop4.8}
$\quad \gamma_2' \geq \min \left\{ \gamma_1, \frac{d_4}{2} -2 \right\}$.
\end{theorem}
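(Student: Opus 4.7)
Let $E$ be a semistable bundle of rank $2$ contributing to $\gamma_2'$, so $h^0(E) = 2+s$ with $s \geq 2$ and $\mu(E) \leq g-1$. Write $d = \deg E$, so $\gamma(E) = d/2 - s$. I would split the argument into two cases according to whether $E$ admits a line subbundle with $h^0 \geq 2$.

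If $E$ has no proper subbundle $N$ with $h^0(N) > \rk N$, then Lemma \ref{lempr} applies to $E$ itself, giving $d \geq d_{2s}$. Since $d_r$ is strictly increasing (Lemma \ref{lem4.1}(a)), $d_{2s} \geq d_4 + 2(s-2)$, and substituting into $\gamma(E) = d/2 - s$ yields $\gamma(E) \geq d_4/2 - 2$.

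In the remaining case, choose a saturated line subbundle $N \subset E$ with $h^0(N) = k \geq 2$, and write $a = \deg N$ and $M = E/N$, a line bundle of degree $d-a$. Semistability forces $a \leq d/2 \leq g-1$, so $N$ contributes to $\gamma_1$ and gives $k \leq (a-\gamma_1)/2 + 1$. The goal is $\gamma(E) \geq \gamma_1$, and the key step is the companion bound $h^0(M) \leq (d-a-\gamma_1)/2 + 1$; adding these two estimates yields $h^0(E) \leq k + h^0(M) \leq d/2 - \gamma_1 + 2$, whence $\gamma(E) \geq \gamma_1$. When $\deg M \leq g-1$, the bound on $h^0(M)$ is immediate from the definition of $\gamma_1$. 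When $\deg M > g-1$, I would pass to $M' := K_C \otimes M^*$, which has degree $< g-1$; if $h^0(M') \geq 2$, then $M'$ contributes to $\gamma_1$ and Riemann--Roch transfers the required bound back to $M$. The subcase $h^0(M) \leq 1$ forces $k \geq 3$, and combining $k \leq (a-\gamma_1)/2 + 1$ with semistability ($d \geq 2a$) gives $\gamma(E) \geq \gamma_1 + k - 1 > \gamma_1$ directly.

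The main technical nuisance is the residual subcase $\deg M > g-1$ with $h^0(M') \leq 1$, where $M'$ is too nonspecial to contribute to $\gamma_1$. Here I would instead use the Riemann--Roch identity $h^0(M) = h^0(M') + d - a - g + 1$ together with the global bound $d \leq 2(g-1)$ and the estimate $a \geq d_1 \geq \gamma_1 + 2$ from Lemma \ref{lem4.4}; these combine to give $\gamma(E) \geq (a+\gamma_1)/2 \geq \gamma_1 + 1$. Assembling the two main cases yields $\gamma(E) \geq \min\{\gamma_1, d_4/2 - 2\}$, which is the desired inequality.
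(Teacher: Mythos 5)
Your proof is correct and follows essentially the same route as the paper: Lemma \ref{lempr} together with the monotonicity of the gonality sequence (Lemma \ref{lem4.1}(a)) when $E$ has no line subbundle with two sections, and the estimate $h^0(E)\le h^0(N)+h^0(E/N)$ combined with Clifford bounds on $N$ and $E/N$ otherwise. The only real difference is that the paper disposes of the quotient by comparing $h^0(E/N)$ and $h^1(E/N)$ with $h^0(N)$ --- if either is $\le h^0(N)$ then $\gamma(E/N)\ge\gamma(N)\ge\gamma_1$ because both $\deg(E/N)$ and $\deg(K_C\otimes(E/N)^*)$ are at least $\deg N$, and otherwise both are $\ge 2$ so $E/N$ contributes to $\gamma_1$ directly --- a trichotomy that makes your residual Riemann--Roch subcase (and the inputs $d\le 2g-2$, $d_1\ge\gamma_1+2$) unnecessary, though your version is also valid.
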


\begin{proof}
Suppose $E$ contributes to $\gamma_2'$. Since $E$ is semistable, any line subbundle $L$ of $E$ has $\deg L \leq g-1$. 
If $h^0(L) \geq 2$, then $\gamma(L) \geq \gamma_1$.

Now consider the quotient $L' = E/L$. By semistability of $E$, $\deg L' \geq \deg L$.
If $h^0(L') \leq h^0(L)$ then $\gamma(L') \geq \gamma(L) \geq \gamma_1$.
Note also that 
$$
\deg(K_C \otimes L'^*) = 2g-2 -\deg E + \deg L \geq \deg L.
$$
So if $h^0(K_C \otimes L'^*) \leq h^0(L)$ we have again $\gamma(K_C \otimes L'^*) \geq \gamma_1$, i.e. $\gamma(L') \geq \gamma_1$.
Otherwise we have $h^0(L') \geq 2$ and $h^1(L') \geq 2$. Hence also $\gamma(L') \geq \gamma_1$. 
This implies 
\begin{equation} \label{eq1}
\gamma(E) \geq \frac{1}{2}\big(\gamma(L) + \gamma(L')\big) \geq \gamma_1.
\end{equation}

Now suppose $E$ has no line subbundle $L$ with $h^0(L) \geq 2$. Write $h^0(E) = 2 + s$ with $s \geq 2$. 
Then by Lemma \ref{lempr}, 
\begin{equation*}
\deg E \geq d_{2s}.
\end{equation*}
By Lemma \ref{lem4.1} (a) this implies 
$$
\deg E \geq d_4 + 2s - 4.
$$
So
\begin{equation} \label{eq2}
\gamma(E) \geq \frac{1}{2}(d_4 + 2s-4 - 2s) = \frac{d_4}{2} - 2.
\end{equation}
The proposition follows from \eqref{eq1} and \eqref{eq2}.
\end{proof}

\begin{cor} \label{cor4.9}
$
\quad \gamma_2 = \min\left\{ \gamma_1, \frac{d_2}{2} - 1 \right\} \geq \min \left\{ \gamma_1, \frac{\gamma_1}{2} + 1 \right\}.
$
In particular, if $\gamma_1\ge2$ and $d_2$ computes the Clifford index, then
$$
\gamma_2 = \frac{\gamma_1}{2} + 1.
$$
\end{cor}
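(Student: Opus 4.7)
The plan is to deduce this corollary by combining Theorems \ref{prop4.7} and \ref{prop4.8} with the trivial inequality $\gamma_2' \leq \gamma_1$ from Lemma \ref{lemma2.2}, and then using Lemma \ref{lem4.4} to relate $d_2$ to $\gamma_1$.

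First I would establish the equality $\gamma_2 = \min\{\gamma_1, \tfrac{d_2}{2} - 1\}$. Substituting the bound from Theorem \ref{prop4.8} into Theorem \ref{prop4.7} gives
$$
\gamma_2 \geq \min\left\{\gamma_1, \tfrac{d_4}{2}-2, \tfrac{d_2}{2}-1\right\}.
$$
Since the $d_r$ are strictly increasing integers by Lemma \ref{lem4.1}(a), we have $d_4 \geq d_3 + 1 \geq d_2 + 2$, so $\tfrac{d_4}{2} - 2 \geq \tfrac{d_2}{2} - 1$, and the $d_4$ term can be dropped. This gives the lower bound $\gamma_2 \geq \min\{\gamma_1, \tfrac{d_2}{2}-1\}$. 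For the reverse inequality, Lemma \ref{lemma2.2} gives $\gamma_2' \leq \gamma_1' = \gamma_1$, and plugging this into Theorem \ref{prop4.7} yields $\gamma_2 = \min\{\gamma_2', \tfrac{d_2}{2}-1\} \leq \min\{\gamma_1, \tfrac{d_2}{2}-1\}$.

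Next I would prove the second inequality $\min\{\gamma_1, \tfrac{d_2}{2}-1\} \geq \min\{\gamma_1, \tfrac{\gamma_1}{2}+1\}$. Since the $\gamma_1$-term matches, it suffices to show $\tfrac{d_2}{2}-1 \geq \min\{\gamma_1, \tfrac{\gamma_1}{2}+1\}$. By Lemma \ref{lem4.4}, $d_2 \geq \min\{\gamma_1+4, g+1\}$, so I would split into two cases: if $d_2 \geq \gamma_1+4$, then $\tfrac{d_2}{2}-1 \geq \tfrac{\gamma_1}{2}+1$; if $d_2 \geq g+1$, then $\tfrac{d_2}{2}-1 \geq \tfrac{g-1}{2} \geq \gamma_1$, using the standard bound $\gamma_1 \leq \lfloor (g-1)/2 \rfloor$ valid for $g \geq 4$. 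Either way the desired inequality holds.

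Finally, for the particular case with $\gamma_1 \geq 2$ and $d_2$ computing the Clifford index, by definition $\gamma_1 = d_2 - 4$, so $\tfrac{d_2}{2}-1 = \tfrac{\gamma_1}{2}+1$. Since $\gamma_1 \geq 2$ forces $\tfrac{\gamma_1}{2}+1 \leq \gamma_1$, the minimum is $\tfrac{\gamma_1}{2}+1$, yielding $\gamma_2 = \tfrac{\gamma_1}{2}+1$. No step here presents a real obstacle; the proof is essentially a bookkeeping exercise consolidating the two theorems, with the only subtlety being the integrality argument $d_4 \geq d_2 + 2$ that lets us discard the $d_4$-term.
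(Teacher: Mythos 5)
Your proof is correct and follows essentially the same route as the paper: combine Theorems \ref{prop4.7} and \ref{prop4.8} with Lemma \ref{lemma2.2}, discard the $d_4$-term via $d_4\ge d_2+2$ from Lemma \ref{lem4.1}(a), and use $d_2\ge\gamma_1+4$ (which the paper asserts directly from Lemma \ref{lem4.4}, while you justify it with a short case split) together with $d_2=\gamma_1+4$ when $d_2$ computes the Clifford index.
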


\begin{proof}
By Lemma \ref{lem4.1} (a), $d_4 \geq d_2 + 2$. Hence the first equality follows from 
Theorems \ref{prop4.8}, \ref{prop4.7} and Lemma \ref{lemma2.2}. The inequality follows from the fact that 
$d_2 \geq \gamma_1 + 4$. Moreover, if $d_2$ computes the Clifford index, then $d_2 = \gamma_1 + 4$.
\end{proof}

\begin{cor} \label{cor4.10}
Let $C$ be a general curve of genus $g$.\\
\emph{(a)} If $g=4$, then 
$$
\gamma_2 = \gamma_2' = \gamma_1 = 1 < \frac{d_2}{2} - 1.
$$
\emph{(b)} If $g \geq 5$, then 
$$
\gamma_2 = \frac{d_2}{2} - 1 = \frac{1}{2}\left( g - \left[ \frac{g}{3} \right] \right).
$$
\emph{(c)} If $g \geq 7, \; g \neq 8$, then 
$$
\gamma_2 < \gamma_2'.
$$
\end{cor}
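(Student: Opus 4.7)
The plan is to feed the known formulas $\gamma_1=\lfloor(g-1)/2\rfloor$ and $d_r=g-\lfloor g/(r+1)\rfloor+r$ (Remark \ref{rem4.2} (c)) into the results just established for rank $2$, namely Proposition \ref{prop3.2} (a), Corollary \ref{cor4.9} and Theorem \ref{prop4.8}, and then do a short case analysis on residues.

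For part (a), since $g=4$ gives $\gamma_1=1$, Proposition \ref{prop3.2} (a) immediately yields $\gamma_2=\gamma_2'=1$. Computing $d_2=4-\lfloor 4/3\rfloor+2=5$ gives $d_2/2-1=3/2>1$, establishing the displayed strict inequality.

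For part (b), Corollary \ref{cor4.9} gives $\gamma_2=\min\{\gamma_1,\,d_2/2-1\}$. I compute
$$
\frac{d_2}{2}-1=\frac{1}{2}\bigl(g-\lfloor g/3\rfloor\bigr),
$$
which already matches the claimed value, so the task reduces to showing that this quantity is $\le\lfloor(g-1)/2\rfloor=\gamma_1$ whenever $g\ge 5$. I would split into the three residue classes $g\equiv 0,1,2\pmod 3$ and check the inequality directly; in each class the bound is linear in $g$ and becomes true once $g\ge 5$, with equality exactly at the small cases ($g=5$, $g=6$, $g=8$).

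For part (c), I combine Corollary \ref{cor4.9} (applied in the form of part (b)) with the lower bound $\gamma_2'\ge\min\{\gamma_1,\,d_4/2-2\}$ from Theorem \ref{prop4.8}. Writing
$$
\frac{d_4}{2}-2=\frac{1}{2}\bigl(g-\lfloor g/5\rfloor\bigr),
$$
it suffices to verify the two strict inequalities $\gamma_2<\gamma_1$ and $\gamma_2<d_4/2-2$, i.e.
$$
\tfrac12\bigl(g-\lfloor g/3\rfloor\bigr)<\lfloor(g-1)/2\rfloor
\quad\text{and}\quad
\lfloor g/5\rfloor<\lfloor g/3\rfloor.
$$
The second holds for every $g\ge 6$ by a one-line comparison of floor functions. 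For the first I again run through the residues $g\pmod 6$ and check directly: the inequality holds for all $g\ge 7$ except precisely $g=8$, where both sides equal $3$. This is exactly the reason $g=8$ must be excluded from the statement, and it is the only genuinely subtle point in the argument.

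The main obstacle is purely bookkeeping: keeping track of when floor functions make the comparisons sharp versus strict. Once the case analysis isolates the offending value $g=8$, parts (a), (b) and (c) all follow mechanically from the results of Section 4 applied to the explicit gonality sequence of a general curve.
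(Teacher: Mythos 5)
Your proposal is correct and follows essentially the same route as the paper: part (a) from $\gamma_1=1$ and $d_2=5$, part (b) from Corollary \ref{cor4.9} together with the inequality $\frac{1}{2}(g-[\frac{g}{3}])\le[\frac{g-1}{2}]$ for $g\ge5$, and part (c) from Theorem \ref{prop4.8} via the two comparisons you list, with $g=8$ correctly isolated as the case where $\frac{1}{2}(g-[\frac{g}{3}])=[\frac{g-1}{2}]$. The paper compresses the floor-function checks into ``a simple computation,'' but the content is identical.
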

\begin{proof}
When $g=4$, we note that $\gamma_1 = 1$ and $d_2 = 5$ which proves (a).

(b): By Corollary \ref{cor4.9} we have $\gamma_2 = \frac{d_2}{2} - 1$ whenever
$$
\frac{d_2}{2} - 1 \leq \gamma_1 = \left[ \frac{g-1}{2} \right].
$$
By Remark \ref{rem4.2} (c) this inequality is equivalent to 
$$
\left[ \frac{g-1}{2} \right] \geq \frac{1}{2} \left( g - \left[ \frac{g}{3} \right] \right).
$$
It is easy to see that this is true for $g \geq 5$. 

(c): By (b), $\gamma_2 = \frac{1}{2}(g - [\frac{g}{3}])$. By Theorem \ref{prop4.8},
$$
\gamma_2' \geq \min \left\{ \gamma_1, \frac{d_4}{2} - 2 \right\} = \min \left\{ \left[ \frac{g-1}{2} \right], 
\frac{1}{2} \left( g - \left[\frac{g}{5} \right] \right)  \right\}.
$$
A simple computation gives the assertion.
\end{proof}

\begin{rem} \label{rem5.5}
\emph{From Theorem \ref{prop4.8}, $\gamma_2' \geq \min \{ \gamma_1, \frac{d_4}{2} - 2 \}$. For a general curve this implies
$$
\gamma_2' \geq \min \left\{ \left[ \frac{g-1}{2} \right], \frac{1}{2} \left( g - \left[ \frac{g}{5} \right] \right) \right\}.
$$
From this and Lemma \ref{lemma2.2} we obtain $\gamma_2' = \gamma_1$ provided
$$
\left[ \frac{g-1}{2} \right] \leq \frac{1}{2} \left( g - \left[ \frac{g}{5} \right] \right).
$$
This holds for $g \leq 10, \;g= 12$ and $g = 14$. For $g \leq 10$ the fact that $\gamma_2' = \gamma_1$ 
follows already from Proposition \ref{prop3.6}. These results and Corollary \ref{cor4.10} can be deduced also from \cite[Lemma 6]{m}.}
\end{rem}

Following on from this remark, we have,

\begin{prop} \label{prop5.6}
Suppose that $\gamma_1 \geq 2$ and there is no semistable bundle on $C$ of rank 2 and degree $d$ with $h^0 \geq 2 + s,\; s \geq 2$ 
and $d_{2s} \leq d < 2 \gamma_1 + 2s$. Then Conjecture \ref{conj3.1} holds for $n = 2$.
\end{prop}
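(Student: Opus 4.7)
The plan is to verify both parts of Conjecture~\ref{conj3.1} directly for a semistable rank~$2$ bundle $E$ of degree $d$. By Lemma~\ref{lem1} I may replace $E$ by $K_C\otimes E^*$ to ensure $\mu(E)\le g-1$, and the case $h^0(E)\le 2$ is automatic: for part~(i) Lemma~\ref{lem2} is satisfied since $\gamma(E)\ge\mu(E)\ge\gamma_1+2>\gamma_1$, while for part~(ii) the right hand side of the desired bound is already $\ge 2$ as soon as $d\ge 2$. Having reduced to $h^0(E)\ge 3$, I would split on whether $E$ admits a line subbundle $L$ with $h^0(L)\ge 2$, mirroring the case distinction used in the proof of Theorem~\ref{prop4.8}.

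In the first branch, the argument from the proof of Theorem~\ref{prop4.8}, applied to $L$, to $L'=E/L$, and to $K_C\otimes L'^*$, produces $\gamma(L)\ge\gamma_1$ and $\gamma(L')\ge\gamma_1$, hence $\gamma(E)\ge\gamma_1$. Lemma~\ref{lem2} then delivers part~(i) at once. For part~(ii), I would rewrite $\gamma(E)\ge\gamma_1$ as $h^0(E)\le\frac{d-2\gamma_1}{2}+2$ and compare with Mercat's bound via the elementary inequality
$$
\frac{d-2\gamma_1}{2}\le\frac{d-2}{\gamma_1+1},
$$
which (given $\gamma_1\ge 2$) is valid precisely when $d\le 2\gamma_1+4$, i.e.\ throughout the range of~(ii).

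In the second branch no line subbundle of $E$ has $h^0\ge 2$, so Lemma~\ref{lempr} applies with $s=h^0(E)-2\ge 1$ and yields $\deg E\ge d_{2s}$. For $s=1$ I would combine $d\ge d_2$ with the estimate $d_2\ge\gamma_1+4$ from Lemma~\ref{lem4.4} (legitimate because $\gamma_1\ge 2$ forces $g\ge 5$, so $\gamma_1+4\le g+1$), and verify both parts by direct substitution into $\gamma(E)=\frac{d-2}{2}$ and into the~(ii) bound $h^0(E)=3\le\frac{d-2}{\gamma_1+1}+2$. For $s\ge 2$ the Proposition's hypothesis excludes the interval $d_{2s}\le d<2\gamma_1+2s$, so in fact $d\ge 2\gamma_1+2s$ and $\gamma(E)=\frac{d-2s}{2}\ge\gamma_1$, which handles~(i); for~(ii) the constraint $d\le 2\gamma_1+4$ leaves only the boundary case $s=2$, $d=2\gamma_1+4$, $h^0(E)=4$, in which the target inequality $4\le\frac{2\gamma_1+2}{\gamma_1+1}+2$ holds as an equality.

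The main obstacle is this second branch with $s\ge 2$: the Paranjape--Ramanan bound $d\ge d_{2s}$ is by itself insufficient to force $\gamma(E)\ge\gamma_1$, and the Proposition's hypothesis is precisely what is needed to close the gap by excluding the interval $[d_{2s},\,2\gamma_1+2s)$. Checking that this single hypothesis simultaneously settles part~(i) and collapses part~(ii) to the single boundary case is the most delicate bookkeeping step in the argument.
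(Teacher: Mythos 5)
Your argument is correct and follows essentially the same route as the paper's: both rest on the dichotomy from the proof of Theorem \ref{prop4.8} (either $E$ has a line subbundle with $h^0\ge 2$, giving $\gamma(E)\ge\gamma_1$, or Lemma \ref{lempr} gives $\deg E\ge d_{2s}$), with the hypothesis used exactly to exclude the interval $[d_{2s},\,2\gamma_1+2s)$. The only organizational difference is that the paper first deduces $\gamma_2'=\gamma_1$ and obtains part (i) via Proposition \ref{prop3.3}(b), treating the $h^0=3$ case separately for part (ii), whereas you verify both parts directly; your explicit handling of the boundary case $s=2$, $d=2\gamma_1+4$ just makes precise what the paper leaves implicit.
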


\begin{proof}
If $E$ contributes to $\gamma_2'$ and $\gamma(E) < \gamma_1$, the proof of Theorem \ref{prop4.8} (see in particular \eqref{eq1}) shows that $d = \deg E \geq d_{2s}$  where $s = h^0(E) - 2$. The hypotheses now imply that $d \geq 2 \gamma_1 + 2s$.
So $\gamma(E) \geq \frac{1}{2}(2 \gamma_1 + 2s - 2s) = \gamma_1$, a contradiction. It follows that $\gamma_2' = \gamma_1$. 
Moreover, $E$ is in the range of Conjecture \ref{conj3.1} (i).

Suppose now that $E$ contributes to $\gamma_2$, but not to $\gamma_2'$. Then $h^0(E) = 3$ and by Corollary \ref{lem5.3}, 
$d \geq d_2$. Since $d_2 \geq \gamma_1 + 4$ by Lemma \ref{lem4.4}, this gives 
$$
\frac{1}{\gamma_1 + 1}(d-2) + 2 \geq \frac{\gamma_1 + 2}{\gamma_1 + 1} + 2 > h^0(E),
$$
verifying Conjecture \ref{conj3.1} (ii).
\end{proof}

\begin{rem} \label{rem5.7}
\emph{The hypotheses of Proposition \ref{prop5.6} certainly hold if $d_4 \geq 2 \gamma_1 + 4$. In any case, a semistable bundle $E$ on $C$ of rank 2 and degree $d\le2g-2$ 
with $h^0(E) \geq 4$ and $\gamma(E) < \gamma_1$ has no line subbundle with $h^0 \geq 2$ (compare again the proof of Theorem \ref{prop4.8}). We could consider extensions of the form 
$$
0 \ra L \ra E \ra M \ra 0,
$$
where $L$ and $M$ are line bundles with $h^0(L) = 1$ and $h^0(M) \geq s+1$. The problem  in constructing $E$ in this way is that one needs to lift a 
large number of sections of $M$ to $E$; this is a difficult problem and is likely to require geometric information about 
$C$ beyond that provided by the gonality sequence (compare \cite{v}).} 
\end{rem}

\section{Ranks three and four}

\begin{theorem} \label{prop5.4}
Suppose $\frac{d_2}{2} \geq \frac{d_3}{3}$. Then
$$
\gamma_3 = \min \left\{ \gamma_3', \frac{1}{3}(d_3 - 2) \right\}.
$$
\end{theorem}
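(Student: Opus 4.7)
The plan is to verify both inequalities separately. For the upper bound, $\gamma_3 \leq \gamma_3'$ is immediate from the definitions. To show $\gamma_3 \leq \frac{1}{3}(d_3-2)$, I would let $L$ be a line bundle computing $d_3$ and consider the dual span bundle $E_L$ from \eqref{eq5.1}. To apply Proposition \ref{prop5.1}(d) I need $\frac{d_p}{p} \geq \frac{d_3}{3}$ for $p=1,2$: the case $p=2$ is the hypothesis, while for $p=1$ Lemma \ref{lem4.1}(c) applied twice yields $d_3 \leq d_1 + d_2 \leq 3d_1$. Thus $E_L$ is semistable of rank $3$ and degree $d_3$ with $h^0(E_L) \geq 4$ and $\mu(E_L) = d_3/3 \leq g-1$, so it contributes to $\gamma_3$ and satisfies $\gamma(E_L) = \frac{1}{3}(d_3-2)$.

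For the lower bound, let $E$ contribute to $\gamma_3$, and analyze by the value of $h^0(E)$. If $h^0(E) \geq 2n = 6$, then $E$ contributes to $\gamma_3'$, so $\gamma(E) \geq \gamma_3'$. If $h^0(E) = n+1 = 4$, Proposition \ref{prop4.11}(a) (whose hypothesis holds by the same argument as above) gives $\gamma(E) \geq \frac{1}{3}(d_3-2)$. The interesting case is $h^0(E) = 5$, which I would break up by considering the minimal rank $p$ of a proper subbundle $N$ of $E$ with $h^0(N) > \rk N$. If no such $N$ exists, Proposition \ref{prop4.11}(b) yields $\gamma(E) > \frac{1}{3}(d_3-2)$. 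If $p=1$, Proposition \ref{prop4.11}(c) yields $\gamma(E) > \gamma_1$, and Lemma \ref{lemma2.2} gives $\gamma_1 \geq \gamma_3'$. If $p=2$ and $h^0(N) \geq 4$ (so $t := h^0(N) - 2 \geq 2$), then both conditions $t \geq \tfrac{4}{3}$ and $t \geq \tfrac{5}{3}$ hold and, combined with $d_2/2 \geq d_3/3$, the second assertion of Proposition \ref{prop4.11}(d) gives $\gamma(E) \geq \frac{1}{3}(d_3-2)$.

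The only delicate subcase is $p=2$ with $h^0(N) = 3$, where Proposition \ref{prop4.11}(d) no longer applies (the required inequality $t \geq \tfrac{4}{3}$ fails for $t=1$); this is where I expect the main obstacle to lie. The key observation is that Proposition \ref{prop4.18} is tailored precisely for this situation: its hypotheses ($n=3$, $h^0(E) = n+2 = 5$, a rank-$(n-1)=2$ subbundle $N$ with $h^0(N) = n = 3$, no smaller $N'$ with $h^0(N') > \rk N'$ by minimality of $p$, and $d_2/2 \geq d_3/3$) are all satisfied, producing $\gamma(E) \geq \min\{\gamma_1, \frac{1}{3}(d_3-2)\}$. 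Since $\gamma_1 \geq \gamma_3'$ by Lemma \ref{lemma2.2}, this yields $\gamma(E) \geq \min\{\gamma_3', \frac{1}{3}(d_3-2)\}$, completing all cases.
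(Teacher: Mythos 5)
Your proposal is correct and is essentially the paper's own proof: the paper's argument is exactly this case division (the $h^0(E)\geq 6$ case by definition of $\gamma_3'$, the remaining cases via Propositions \ref{prop4.11} and \ref{prop4.18}, and existence via Proposition \ref{prop5.1}(d)), which you have simply written out in full detail. The only quibble is that in the upper bound you should claim $\gamma(E_L)\leq\frac{1}{3}(d_3-2)$ rather than equality, since a priori one only knows $h^0(E_L)\geq 4$ (and indeed $h^0(E_L)$ can exceed $4$ when $d_3=3d_1$, e.g.\ on trigonal curves); the inequality is all that is needed.
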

\begin{proof}
Suppose $E$ contributes to $\gamma_3$. 
If $h^0(E) \geq 6$, then $\gamma(E) \geq \gamma_3'$ by definition of $\gamma_3'$. All other possibilities are covered 
by Propositions \ref{prop4.11} and \ref{prop4.18}.
Hence $\gamma_3 \geq \min \{ \gamma_3', \frac{1}{3}(d_3 - 2) \}.$

Moreover, by the definition of $\gamma_3'$ there exists $E$ with $\gamma(E) = \gamma_3'$. By Proposition \ref{prop5.1} (d), there
exists a semistable bundle $E$ of rank 3 and degree $d_3$ with $h^0(E) \geq 4$. Since $d_3 \leq 3g-3$ by Lemma \ref{lem4.1} (d), 
this gives $\gamma_3 \leq \gamma(E) \leq \frac{1}{3} (d_3 - 2)$. This completes the proof.
\end{proof}

\begin{theorem} \label{prop7.1}
If $ \frac{d_3}{3} \geq \frac{d_4}{4}$, then
$$
\gamma_4 = \min \left\{ \gamma_4', \frac{1}{4}(d_4 -2), \frac{1}{2}(d_2-2) \right\}.
$$ 
\end{theorem}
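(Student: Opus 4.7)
The plan is to mirror the strategy of Theorem \ref{prop5.4} but with the additional feature that, since $2\mid 4$, rank-$2$ subbundles of $E$ contribute the new term $\tfrac{1}{2}(d_2-2)$ to the minimum. For the upper bound, I would first note that $\gamma_4\le\gamma_4'$ is Lemma \ref{lemma2.2}. The remaining two bounds come from Lemma \ref{lem4.10}: applied with $p=n=4$ it yields $\gamma_4\le\tfrac{1}{4}(d_4-2)$ once one checks $d_q/q\ge d_4/4$ for $q<4$, which reduces to the hypothesis $d_3/3\ge d_4/4$ together with $d_4\le 2d_2$ and $d_4\le 4d_1$ from Lemma \ref{lem4.1}(c); applied with $p=2$, $n=4$ it yields $\gamma_4\le\tfrac{1}{2}(d_2-2)$ once one checks $d_1\ge d_2/2$, which is again Lemma \ref{lem4.1}(c).

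For the lower bound let $E$ be a semistable bundle of rank $4$ contributing to $\gamma_4$ and write $h^0(E)=4+s$ with $s\ge 1$. If $s\ge 4$ then $E$ contributes to $\gamma_4'$ and we are done, so assume $1\le s\le 3$. I would then split on the minimal rank $p$ of a subbundle $N\subset E$ with $h^0(N)>\operatorname{rk}N$. If no such $N$ exists, Proposition \ref{prop4.11}(a)--(b) gives $\gamma(E)\ge\tfrac{1}{4}(d_4-2)$, using the already verified inequalities $d_q/q\ge d_4/4$. If $p=1$, then $h^0(E)\le 7=2n-1$ so Proposition \ref{prop4.11}(c) yields $\gamma(E)>\gamma_1\ge\gamma_4'$ (the last inequality being Lemma \ref{lemma2.2} with $1\mid 4$). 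For $p=2$, Proposition \ref{prop4.11}(d) requires $t=h^0(N)-2\ge s/2$; this holds automatically with $t\ge 1$ for $s\le 2$ and gives $\gamma(E)\ge\tfrac{1}{2}(d_2-2)$. For $p=3$, the case $s=1$ is covered by Proposition \ref{prop4.11}(d) with $p=3$ (yielding $\tfrac{1}{4}(d_4-2)$ via its second part, using $d_3/3\ge d_4/4$), while the case $s=2$, $h^0(N)=4$ is exactly the setting of Proposition \ref{prop4.18}, yielding $\min\{\gamma_1,\tfrac{1}{4}(d_4-2)\}$. Sub-cases with $h^0(N)$ one unit larger fall within Proposition \ref{prop4.11}(d).

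The hard part will be two delicate sub-cases with $h^0(E)=7$: namely $p=2$ with $h^0(N)=3$, and $p=3$ with $h^0(N)=4$, where Proposition \ref{prop4.11}(d) is just barely not applicable. In these sub-cases Lemma \ref{lempr} gives $\deg N\ge d_p$ and semistability gives $\deg E\ge (4/p)\deg N$, leading to $\gamma(E)\ge\tfrac{1}{2}(d_p-3)$, which is a half short of $\tfrac{1}{2}(d_2-2)$ when $p=2$. To recover the missing half I would analyze the quotient $E/N$ of rank $4-p$: in the extremal situation $\deg E=(4/p)d_p$ the subbundle $N$ is destabilizing of equal slope, so $E/N$ is semistable of rank $4-p$ with $h^0(E/N)\ge h^0(E)-h^0(N)\ge 2(4-p)$ and $\mu(E/N)\le g-1$. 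Thus $E/N$ contributes to $\gamma_{4-p}'$, and Lemma \ref{lemma2.2} gives $\gamma_{4-p}'\ge\gamma_4'$, so the inequality $\gamma(E/N)\ge\gamma_4'$ combined with the exact-sequence computation of $\gamma(E)$ in terms of $\gamma(N)$ and $\gamma(E/N)$ forces $\gamma(E)\ge\gamma_4'$. The remaining edge cases where $\deg E$ exceeds $(4/p)d_p$ by one have to be checked by a short integrality argument showing that the only potential destabilizing subbundle of $E/N$ would have slope violating the semistability of $E$; this integrality check is the main technical obstacle, but once settled it completes the lower bound and hence the theorem.
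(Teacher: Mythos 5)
Your reduction to the hard cases is sound and matches the paper's: the upper bound via Lemma \ref{lemma2.2} and Lemma \ref{lem4.10} is correct (the paper instead exhibits $E_L$ and $E_L\oplus E_L$ explicitly, but this is the same content), and Propositions \ref{prop4.11} and \ref{prop4.18} do leave exactly the two sub-cases you identify, namely $h^0(E)=7$ with a minimal subbundle $N$ of rank $p\in\{2,3\}$ and $h^0(N)=p+1$. For $p=2$ your route is genuinely different from the paper's and can be made to work: if $\deg E\ge 2d_2+2$ you are done directly, and otherwise $\deg N=d_2$, $\deg E\in\{2d_2,2d_2+1\}$, one checks $E/N$ is semistable with $\mu(E/N)\le g-1$ and $h^0(E/N)\ge 4$, so $\gamma(E)\ge\frac{1}{2}(\gamma(N)+\gamma(E/N))\ge\min\left\{\frac{1}{2}(d_2-2),\gamma_2'\right\}$ and $\gamma_2'\ge\gamma_4'$. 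The paper instead dissects $E/N$ by hand according to its line subbundles; your version is cleaner because it reuses the rank-two theory.

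The gap is in the sub-case $p=3$, $h^0(N)=4$, $h^0(E)=7$. First, your ``extremal situation'' $\deg E=\frac{4}{3}d_3$ cannot be the relevant one: here $E/N$ is a line bundle with $h^0(E/N)\ge 3$, so $\deg(E/N)\ge d_2>\frac{d_3}{3}$ (since $d_3<2d_2$ by Lemma \ref{lem4.1}), and the correct starting point is $\deg E\ge d_3+d_2$. Second, $E/N$ need not contribute to $\gamma_1$ at all: its degree can exceed $g-1$ with $h^1(E/N)\le 1$, so the inequality $\gamma(E/N)\ge\gamma_1$ is not available without a Serre-duality detour as in the proof of Theorem \ref{prop4.8}. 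Third, even granting $\gamma(E/N)\ge\gamma_1$, the averaging inequality gives $\gamma(E)\ge\frac{1}{4}\bigl((d_3-2)+\gamma_1\bigr)\ge\min\left\{\frac{1}{3}(d_3-2),\gamma_1\right\}$, and $\frac{1}{3}(d_3-2)$ can be strictly smaller than $\frac{1}{4}(d_4-2)$ under the hypothesis $\frac{d_3}{3}\ge\frac{d_4}{4}$ (the hypothesis gives $4d_3\ge 3d_4$, whereas $\frac{1}{3}(d_3-2)\ge\frac{1}{4}(d_4-2)$ needs $4d_3\ge 3d_4+2$); so your claim that this ``forces $\gamma(E)\ge\gamma_4'$'' does not follow. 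The paper closes this case differently: from $\deg E\ge d_3+d_2$, either $d_3\ge d_2+2$, whence $\deg E\ge 2d_2+2$ and $\gamma(E)\ge\frac{1}{2}(d_2-2)$, or $d_3=d_2+1$, whence $\deg E\ge 2d_3-1\ge\frac{3}{2}d_4-1\ge d_4+4$ whenever $d_4\ge 10$, giving $\gamma(E)\ge\frac{1}{4}(d_4-2)$, with the remaining low-genus possibilities ($g=5,6$) checked by hand. Some such integrality/case analysis on $d_2,d_3,d_4$ is unavoidable here and is missing from your plan.
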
 

\begin{proof}
The result holds for $\gamma_1 = 0$ and 1 by Proposition \ref{prop3.2} (a) and the fact that $d_2 \geq 4$ and $d_4 \geq 6$. 
So suppose $\gamma_1 \geq 2$.
 
Suppose that $E$ contributes to $\gamma_4$. In order to prove the inequality 
$$
\gamma_4 \geq \min \left\{ \gamma_4', \frac{1}{4}(d_4 -2), \frac{1}{2}(d_2-2) \right\}.
$$
we may assume by Propositions \ref{prop4.11} and \ref{prop4.18} that
$h^0(E) = 7$ and $E$ admits a subbundle $N$ of rank $p$ with $2 \leq p \leq 3$ and $h^0(N) = p+1$ and such that $E$ 
does not admit a subbundle of smaller rank with $h^0 > \rk$. 

{\it Case 1:} $p=2$. We have $h^0(N) = 3$ and $h^0(E) = 7$, so $h^0(E/N) \geq 4$. If $E/N$ has no line subbundle with $h^0 \geq 2$,
then Lemma \ref{lempr} gives 
$$
\deg E \geq \deg N + \deg (E/N) \geq d_2 +d_4 \geq d_4 + 4.
$$ 
This implies
\begin{equation} \label{eq7.2}
\gamma(E) \geq \frac{1}{4}(d_4 + 4 - 6) = \frac{1}{4}(d_4 -2).
\end{equation}

If $E/N$ has a line subbundle $M$ with $h^0(M) \geq 3$, then $\deg M \geq d_2$ by definition of $d_2$ and 
$\deg ((E/N)/M ) \geq \frac{d}{4} \geq \frac{d_2}{2}$, since $E$ is semistable. So 
$$
\deg E \geq d_2 + d_2 + \frac{d_2}{2} \geq 2d_2 + 2,
$$
since $d_2 \geq 4$. Hence
\begin{equation} \label{eq6.2}
\gamma(E) \geq \frac{1}{4} ( 2d_2 + 2 - 6) = \frac{1}{2}(d_2 - 2).
\end{equation}

If $E/N$ has a line subbundle $M$ with $h^0(M) = 2$, then $\deg M \geq d_1$ and $\deg ((E/N)/M) \geq d_1$. So
$$
\deg E \geq d_2 + 2d_1.
$$
If $d_2 \leq 2d_1 -2$, then $\deg E \geq 2d_2 + 2$, so \eqref{eq6.2} holds.
If $d_2 \geq 2d_1 - 1$, then $\deg E \geq 4 d_1 -1$ and hence
\begin{equation} \label{eq6.3}
\gamma(E) \geq \frac{1}{4} ( 4 d_1 - 1 - 6) = d_1 - \frac{7}{4} > d_1 - 2 \geq \gamma_1 \geq \gamma_4'.
\end{equation}

{\it Case} 2: $p=3$. We have $h^0(N) = 4$ and $h^0(E) = 7$.
So $h^0(E/N) \geq 3$ and hence $�\deg (E/N) \geq d_2$ by definition of $d_2$. 
If $d_3 \geq d_2 +2$, then $\deg E \geq 2d_2 + 2$ and hence again \eqref{eq6.2} holds.
If $d_3 = d_2 + 1$, then using the hypothesis, 
$$ 
\deg E \geq  d_2 + d_3 =2d_3 -1 \geq \frac{3}{2}d_4 -1. 
$$
If $d_4 \geq 10$, this gives $\deg E \geq d_4 +4$, so \eqref{eq7.2} holds. Otherwise, by Lemma \ref{lem4.4},
we have $g=5$ or $g=6$. In either case, using Remark \ref{rem4.2} (c) and Lemma \ref{lem4.4}, we get $d_2 = 6$ and $d_4 \leq 9$. So
$$
\deg E \geq d_2 + d_3 \geq d_4 + 4
$$
and again \eqref{eq7.2} holds.

The inequality $\gamma_4 \geq \min \left\{ \gamma_4', \frac{1}{4}(d_4 -2), \frac{1}{2}(d_2-2) \right\}$ 
follows from the inequalities \eqref{eq7.2}, \eqref{eq6.2} and \eqref{eq6.3}.

The proof of the equality is similar to the last part of the proof of Theorem \ref{prop5.4}.
To obtain $\gamma(E) = \frac{1}{2}(d_2-2)$, we define $E$ to be $E = E_L \oplus E_L$ where $L$ is a 
line bundle of degree $d_2$ with $h^0(L) = 3$.
\end{proof}

\begin{rem}
\emph{The conditions $\frac{d_2}{2} \geq \frac{d_3}{3}$ and $\frac{d_3}{3} \geq \frac{d_4}{4 }$
are satisfied for general curves (see proof of Proposition \ref{cor5.2}) 
and also for hyperelliptic, trigonal, general quadrigonal and bielliptic curves (see Remark \ref{rem4.3}).}

\emph{For $\gamma_1 \leq 2$, we already know that $\gamma_n' = \gamma_1$ and we have precise values for the $\gamma_n$ 
from Corollary \ref{cor4.15}, so Theorems \ref{prop5.4} and \ref{prop7.1} do not add anything to our knowledge in these
cases. For a general curve of genus $g \geq 7$, we can show that $\frac{1}{2}(d_2 - 2), \frac{1}{3}(d_3 - 2)$ and
$ \frac{1}{4}(d_4 - 2)$ are all $\geq 2$. So, in the absence of any good lower bound for $\gamma_n'$, 
Theorems \ref{prop5.4} and \ref{prop7.1} tell us that 
$$2 \leq \gamma_n \leq \gamma_n' \leq \gamma_1
$$
for $n=3,4$.
} 
\end{rem}

\section{Rank five}

In this section we obtain partial results for $\gamma_5$. 

\begin{lem} \label{lem7.1}
Suppose $\frac{d_4}{4} \geq \frac{d_5}{5}$. Let $E$ be a semistable bundle of rank $5$ with $h^0(E) \leq 9$ 
and $N$ a subbundle of rank $p$, $2 \leq p \leq 4$, with $h^0(N) \geq p+2$. Suppose further that $E$ has no subbundle of 
rank $< p$ with $h^0 > \emph{\rk}$. Then
$$
\gamma(E) \geq \frac{1}{5}(d_5 - 2).
$$ 
\end{lem}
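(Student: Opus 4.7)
The plan is to extract a lower bound on $\deg E$ by applying Lemma \ref{lempr} to $N$ and invoking the semistability of $E$, then compare with the target. Set $s := h^0(N) - p \geq 2$ and $t := h^0(E) - 5 \leq 4$; the desired conclusion $\gamma(E) \geq \tfrac{1}{5}(d_5 - 2)$ is equivalent to $\deg E \geq d_5 + 2t - 2$. The hypothesis that $E$ has no subbundle of rank $<p$ with more sections than its rank transfers verbatim to $N$, since any proper subbundle of $N$ is a subbundle of $E$ of rank $< p$. Thus Lemma \ref{lempr} applies to $N$ and yields $\deg N \geq d_{ps}$, and semistability of $E$ promotes this to
$$
\deg E \;\geq\; \tfrac{5}{p}\, \deg N \;\geq\; \tfrac{5}{p}\, d_{ps}.
$$

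In the generic range $ps \geq 5$, monotonicity of the gonality sequence (Lemma \ref{lem4.1}(a)) gives $d_{ps} \geq d_5 + (ps - 5)$, and the target inequality reduces to
$$
\bigl(\tfrac{5}{p} - 1\bigr) d_5 \,+\, 5s \,-\, 2t \,-\, \tfrac{25}{p} + 2 \;\geq\; 0.
$$
The coefficient of $s$ is positive and that of $t$ negative, so the worst case is $s = 2$, $t = 4$; a direct check for $p \in \{2,3,4\}$ shows the tightest instance is $p = 4$, where the condition becomes $d_5 \geq 9$. This is automatic: $d_5 \geq 10$ for $g \geq 6$ by Clifford's theorem (Remark \ref{rem4.2}(a)), and $d_5 = g + 5 \geq 9$ for $g \in \{4,5\}$ by Riemann--Roch (Remark \ref{rem4.2}(b)).

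The only remaining case is $ps < 5$, which forces $p = s = 2$; this is precisely where the hypothesis $\tfrac{d_4}{4} \geq \tfrac{d_5}{5}$ becomes indispensable. Combined with $\deg N \geq d_4$ it gives $\deg E \geq \tfrac{5}{2} d_4 \geq 2 d_5$, and since $d_5 + 2t - 2 \leq d_5 + 6$ the inequality reduces to $d_5 \geq 6$, which is trivial. Isolating and handling this degenerate case is the main subtlety of the argument; elsewhere only the monotonicity of the gonality sequence is needed.
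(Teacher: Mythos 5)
Your proof is correct and follows essentially the same route as the paper's: apply Lemma \ref{lempr} to $N$ (the hypothesis on subbundles of rank $<p$ passing to $N$ via saturation), promote $\deg N\geq d_{ps}$ to $\deg E\geq \frac5p d_{ps}$ by semistability, handle $ps\geq 5$ by monotonicity of the gonality sequence and $ps=4$ (i.e.\ $p=s=2$) by the hypothesis $\frac{d_4}{4}\geq\frac{d_5}{5}$, and finish with a small lower bound on $d_5$. The only cosmetic difference is that the paper takes $s=2$ from the outset (using just $h^0(N)\geq p+2$) and bounds $d_5\geq 8$ via Lemma \ref{lem4.4}, whereas you track general $s$ before reducing to the same worst case $s=2$, $t=4$.
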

\begin{proof}
Lemma \ref{lempr} implies that $\deg N \geq d_{2p}$. Semistability gives 
$$
\deg E \geq \frac{5d_{2p}}{p}.
$$ 
If $p = 2$, the hypothesis implies that
$$
\deg E \geq \frac{5d_4}{2} \geq \frac{5}{2} \frac{4}{5} d_5 = 2 d_5 \geq d_5 + 8,
$$
since $d_5 \geq 8$ by Lemma \ref{lem4.4}. If $p = 3$ or $4$, then $2p > 5$, so
$$
\deg E \geq \frac{5d_{2p}}{p} \geq \frac{5}{p}(d_5 + 2p-5) = d_5 + \frac{5-p}{p}d_5 +10 -\frac{25}{p} \geq d_5 + 2 + \frac{15}{p}.
$$
So $\deg E \geq d_5 + 6$. Hence in any case
$$
\gamma(E) \geq \frac{1}{5}(d_5 - 2).
$$
\end{proof}

\begin{lem} \label{lem7.2}
Let $F$ be a vector bundle of rank $2$ with $h^0(F) \geq 2 + t,\; t \geq 1$. Suppose $F$ is a quotient of a 
semistable bundle $E$ of rank $n$ and degree $d > 0$. Then
$$
\deg F \geq \min_{1 \leq u \leq t-1}  \left\{ d_{2t}, d_t + \frac{d}{n}, d_u + d_{t-u} \right\}.
$$
\end{lem}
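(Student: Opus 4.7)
The plan is to case-split on whether $F$ has a line subbundle with ``many'' sections, i.e.\ on whether the hypothesis of Lemma \ref{lempr} is satisfied by $F$ itself.

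First I would dispose of the ``generic'' case: assume $F$ has no proper subbundle $N$ with $h^0(N) > \mathrm{rk}\,N$. Since $F$ has rank $2$, this just says no line subbundle $L$ of $F$ has $h^0(L)\ge 2$. Applying Lemma \ref{lempr} directly to $F$ with $n=2$ and $s=t$ (using the hypothesis $h^0(F)\ge 2+t$) yields $\deg F \ge d_{2t}$, accounting for the first term in the minimum.

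Otherwise, choose a (saturated) line subbundle $L\subset F$ with $h^0(L)=v\ge 2$ maximal, and set $u := v-1\ge 1$, so that by definition of $d_u$ one has $\deg L \ge d_u$. Let $M := F/L$, which is a line bundle, and note that $M$ is a quotient of $F$ and hence of $E$. Split on the size of $u$:
\begin{itemize}
\item If $1\le u\le t-1$, then from the long exact sequence of sections one obtains
$$
h^0(M)\;\ge\; h^0(F)-h^0(L)\;\ge\;(2+t)-(u+1)\;=\;(t-u)+1,
$$
hence $\deg M\ge d_{t-u}$ and $\deg F=\deg L+\deg M\ge d_u+d_{t-u}$.
\item If $u\ge t$, then $\deg L \ge d_u \ge d_t$ (monotonicity from Lemma \ref{lem4.1}(a)). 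Since $M$ is a line bundle quotient of the semistable bundle $E$ of slope $d/n$, semistability gives $\deg M\ge d/n$, so $\deg F \ge d_t + d/n$.
\end{itemize}
Collecting the three cases gives exactly the stated minimum.

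The argument is essentially a bookkeeping exercise once the case split is chosen correctly; the only point that needs a little care is the sub-case $u\ge t$, where $h^0(M)$ may drop to $0$ or $1$ so the invariants $d_r$ do not help to bound $\deg M$, and one must fall back on semistability of $E$ to produce the $d/n$ term. Saturating any non-saturated line subsheaf only increases its space of sections, so working with a saturated $L$ (which guarantees that $M=F/L$ is a line bundle) is legitimate and does not weaken the inequality $\deg L\ge d_u$.
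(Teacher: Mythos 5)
Your proof is correct and follows essentially the same route as the paper's: split on whether $F$ has a line subbundle with $h^0\ge 2$, apply Lemma \ref{lempr} in the first case, and in the second case compare $u=h^0(L)-1$ with $t$, using the gonality sequence when $u\le t-1$ and semistability of $E$ when $u\ge t$. Your remark about saturating $L$ so that $F/L$ is a line bundle is a point the paper leaves implicit, but otherwise the arguments coincide.
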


\begin{proof}
If $F$ has no line subbundle with $h^0 \geq 2$, then Lemma \ref{lempr} implies that 
$\deg F \geq d_{2t}$. 

Otherwise let $N$ be a line subbundle with $h^0(N) = 1 + u, \; u \geq 1$.
If $u \geq t$, then $\deg N \geq d_t$ and $\deg (F/N) \geq \frac{d}{n}$ by semistability of $E$.
If $u \leq t-1$, then $\deg N \geq d_u$ and $\deg (F/N) \geq d_{t-u}$, since $h^0(F/N) \geq t - u + 1 \geq 2$.
\end{proof}

\begin{theorem} \label{prop8.1}
If $ \frac{d_p}{p} \geq \frac{d_{p+1}}{p+1}$ for $1 \leq p \leq 4$, then
\begin{eqnarray*}
\gamma_5 &\geq  \min &\left\{  \gamma_5', \frac{1}{2}(d_2 -2), \frac{1}{5}(d_5 -2),  \frac{1}{5}(d_1 + 2d_2 - 6), 
\frac{1}{5}(d_1 + d_4 -4), \right.\\
&& \left. \hspace*{1.2cm} \frac{1}{5}(2d_1 + d_3 - 6), \frac{1}{5}(3d_1 + d_2 -8) , \frac{1}{5}(d_2 + d_3 -5) \right\}. 
\end{eqnarray*}
\end{theorem}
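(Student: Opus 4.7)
The plan is to mimic the case analysis of Theorems \ref{prop5.4} and \ref{prop7.1}, but with considerably more subcases since $n=5$ admits richer subbundle structure. Let $E$ be a semistable bundle of rank $5$ contributing to $\gamma_5$, set $d=\deg E$ and write $h^0(E)=5+s$ with $s\ge1$. The large-$s$ case is immediate: if $s\ge5$, i.e.\ $h^0(E)\ge 10=2n$, then $\gamma(E)\ge\gamma_5'$ by definition. From now on assume $s\le 4$.

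If $E$ has no proper subbundle $N$ with $h^0(N)>\rk N$, Lemma \ref{lempr} gives $d\ge d_{5s}\ge d_5+5(s-1)$ by Lemma \ref{lem4.1}(a), hence $\gamma(E)\ge\frac{1}{5}(d_5-2)$. Otherwise, choose $N\subset E$ of minimal rank $p\in\{1,2,3,4\}$ with $h^0(N)=p+t$, $t\ge1$. If in addition $p\ge2$ and $t\ge2$, Lemma \ref{lem7.1} again gives $\gamma(E)\ge\frac{1}{5}(d_5-2)$. Thus the residual cases are $p=1$ with $t\ge1$, and $(p,t)=(p,1)$ for $p\in\{2,3,4\}$.

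For each residual case I would apply Lemma \ref{lempr} to $N$ to bound $\deg N$ from below in terms of $d_{pt}$, then use semistability of $E$ to bound $\deg N\le pd/5$, and finally analyse the quotient $F=E/N$ (of rank $5-p$ and $h^0(F)\ge 5+s-h^0(N)$) via its minimal subbundle with more sections than rank, invoking Lemma \ref{lem7.2} whenever $F$ contains a rank-$2$ subquotient. Concretely: $p=4,t=1$ with $s\ge2$ contributes $\frac{1}{5}(d_1+d_4-4)$ (from $\deg N\ge d_4$, $F$ a line bundle with $h^0(F)\ge2$, so $\deg F\ge d_1$); $p=3,t=1$ yields the terms $\frac{1}{5}(d_2+d_3-5)$ and $\frac{1}{5}(2d_1+d_3-6)$ depending on whether $F$ has a line subbundle with $h^0\ge2$; $p=2,t=1$ yields $\frac{1}{2}(d_2-2)$ and related contributions via Lemma \ref{lem7.2} applied to $F$; and $p=1,t\ge1$ yields the mixed bounds $\frac{1}{5}(d_1+2d_2-6)$ and $\frac{1}{5}(3d_1+d_2-8)$ according to which rank-$2$ and rank-$3$ subpieces the rank-$4$ quotient $F$ contains. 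In every branch the hypothesis $\frac{d_p}{p}\ge\frac{d_{p+1}}{p+1}$ for $1\le p\le4$ is used to convert degree bounds on lower-rank subbundles into slope-comparisons consistent with the semistability of $E$, and to interpolate, e.g.\ $d_{5s}\ge d_5+5(s-1)$ or $d_{2p}\ge d_p+p$.

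The main obstacle is not any single inequality but the book-keeping: for each of the (few) residual $(p,t)$-cases one must enumerate the possible filtrations of $F$ by its minimal-rank subbundles-with-extra-sections, show that the hypothesis on the gonality sequence forces the worst configuration to be one of the listed ones, and verify that the resulting lower bound for $d$, once combined with $\gamma(E)=\frac{1}{5}(d-2s)$, reproduces precisely one of the seven quantities appearing in the minimum. Because $s\le4$, the enumeration is finite and each branch is routine; care is needed only to ensure that no configuration is missed and that the estimates $\deg F\ge\deg E-\deg N$ together with Lemma \ref{lem7.2} genuinely cover the non-semistable quotients.
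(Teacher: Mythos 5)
Your skeleton is the same as the paper's: dispose of $h^0(E)\ge 10$ via $\gamma_5'$, of bundles with no subbundle having $h^0>\rk$ via Lemma \ref{lempr}, and of minimal subbundles $N$ with $h^0(N)\ge \rk N+2$ via Lemma \ref{lem7.1}, then analyse $E/N$ for a minimal $N$ with $h^0(N)=p+1$, using Lemma \ref{lem7.2} on rank-$2$ pieces. One structural remark: the case $p=1$ should not be analysed through the rank-$4$ quotient at all. If $E$ has a line subbundle with $h^0\ge2$, semistability alone gives $\deg E\ge 5d_1$, whence $\gamma(E)>d_1-2\ge\gamma_1\ge\gamma_5'$ (Proposition \ref{prop4.11}(c)); the terms $\frac{1}{5}(d_1+2d_2-6)$ and $\frac{1}{5}(3d_1+d_2-8)$ that you assign to $p=1$ in fact arise in the case $p=2$, from line subbundles of $E/N$, and $\frac{1}{5}(d_2+d_3-5)$ arises from $p=4$ with $h^0(E)=8$, not from $p=3$. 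These misattributions indicate that the enumeration has not actually been carried out.

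The genuine gap is that the ``routine book-keeping'' you defer is where the theorem lives, and it does not close without a preliminary reduction that your proposal omits. The paper first disposes of the cases $\gamma_1\le 2$ or $g\le 8$ using the exact values of $\gamma_n$ from Proposition \ref{prop3.2}(a) and Theorems \ref{thm3.5} and \ref{thm4.14}, so that in the main argument one may assume $\gamma_1\ge2$, $d_2\ge6$ and $d_3\ge9$. These numerical bounds are used repeatedly and essentially: for instance, to pass from $\deg E\ge\frac{10}{3}d_2-$ (resp. $3d_2$) to a bound of the form $\frac{1}{2}(d_2-2)$ one needs $d_2\ge6$; and the branch $p=4$, $h^0(E)=9$ requires a further split according to whether $d_4\ge 2d_2-2$, with the complementary case resolved only via $d_5\le\frac{5}{4}d_4$ and the estimate $d_3>\frac{1}{2}d_2+\frac{17}{4}$, which needs $d_3\ge9$. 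Without this reduction, several branches (e.g. the naive $\gamma(E)\ge\frac{1}{5}(d_3+d_4-8)$ from $p=4$, $s=4$) yield lower bounds that are not visibly among the seven listed quantities, so the stated minimum is not reproduced. You need to add the reduction and then actually run the $(p,s)$ enumeration with these bounds in hand.
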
 

\begin{proof}
Let $E$ be a semistable vector bundle of rank 5 and degree $d$. By Proposition \ref{prop4.11} and Lemma \ref{lem7.1}
we may assume that $h^0(E) = 5+s$ with $2 \leq s \leq 4$ and $E$ admits a subbundle $N$ of rank $p$ 
with $2 \leq p \leq 4$ and $h^0(N) = p+1$ and such that $E$ 
does not admit a subbundle of smaller rank with $h^0 > \rk$.
 
If $\gamma_1 \leq 2$ or $ g \leq 8$, we have precise values for all $\gamma_n$ by Proposition \ref{prop3.2} (a) and Theorems \ref{thm3.5} and \ref{thm4.14}. One can check that these values satisfy the required inequality.
So we may assume that $\gamma_1 \geq 3$ and $g \geq 9$, implying by Lemma \ref{lem4.4} that 
$$
d_2 \geq 7, \; d_3 \geq 9, \; d_4 \geq 11 \; \mbox{and} \; d_5 \geq 13.
$$ 
In fact, in the proof we use only $\gamma_1 \geq 2, \; d_2 \geq 6$ and $d_3 \geq 9$.

{\it Case} 1: $p=2$.
We have $h^0(N) = 3$. So Lemma \ref{lempr} implies $\deg N \geq d_2$ and hence $\frac{d}{5} \geq \frac{d_2}{2} \geq 3$ 
by semistability of $E$.
Moreover, $E/N$ has rank 3 and $h^0(E/N) \geq s + 2$.

{\it Case} 1 a: Suppose $E/N$ has no proper subbundle with $h^0 > \rk$. Then by Lemma \ref{lempr}, $\deg (E/N) \geq d_{3(s-1)}$. So
$$
\deg E \geq d_2 + d_{3(s-1)} \geq d_2 + d_3 + 3(s-2)$$
and
\begin{equation} \label{eq8.1}
\gamma(E) \geq \frac{1}{5}\big(d_2 + d_3 +3(s-2) -2s\big) \geq \frac{1}{5}(d_2 + d_3 -4).
\end{equation}

{\it Case} 1 b: Suppose $E/N$ has a line subbundle $M$ with $h^0(M) \geq 2$. 
If $h^0(M) \geq 3$, then $\deg M \geq d_2$. So by semistability of $E$,
$$
\frac{d}{5} \leq \frac{\deg((E/N)/M)}{2} \leq \frac{d-2d_2}{2}.
$$
Hence 
$$
d \geq \frac{10}{3} d_2
$$
and 
\begin{equation} \label{equ7.2}
\gamma(E) \geq \frac{1}{5} \left( \frac{10}{3} d_2 - 8 \right) = \frac{d_2}{2} + \frac{d_2}{6} - \frac{8}{5} \geq \frac{d_2}{2} -\frac{3}{5}. 
\end{equation}

Now suppose $h^0(M) = 2$.
Then $\deg M \geq d_1$ and $\deg((E/N)/M) \geq \frac{2d}{5}$ 
by semistability of $E$. So 
$$
\deg E \geq d_2 + d_1 + \frac{2d}{5} \geq 2d_2 + d_1 \geq d_2 + d_3.
$$
If $s = 2$, this gives 
\begin{equation}
\gamma(E) \geq \frac{1}{5}(d_2 + d_3 - 4).
\end{equation}
If $s=3$, then $h^0((E/N)/M) \geq 3$. So $\deg ((E/N)/M) \geq \min\{ d_2, d_1 + \frac{d}{5} \}$
by Lemma \ref{lem7.2} and
\begin{eqnarray*}
\deg E &\geq & \min \left\{ d_1 + 2d_2, 2d_1 + d_2 + \frac{d}{5} \right\}\\
& \geq & \min \left\{ d_1 + 2d_2, 2d_1 + \frac{3d_2}{2}  \right\} \geq \min \{ d_1 + 2d_2, 2d_1 + d_3 \}.
\end{eqnarray*}  
Hence
\begin{equation} \label{eq8.4}
\gamma(E) \geq \min \left\{ \frac{1}{5}(d_1 + 2d_2 - 6), \frac{1}{5}( 2d_1 + d_3 - 6) \right\}.
\end{equation}

If $s=4$, then $\deg M \geq d_1$ and $h^0((E/N)/M) \geq 4$. 
Using Lemma \ref{lem7.2}, this gives $\deg ((E/N)/M) \geq \min \{d_4, 2d_1, d_2 + \frac{d}{5} \}$. So
\begin{eqnarray*}
\deg E &\geq & \min \left\{d_2 + d_1 + d_4,  d_2 + 3d_1, 2d_2 + d_1 + \frac{d}{5} \right\}\\
& \geq &  \min \{d_5 + 6,  3d_1 + d_2, d_1 + 2d_2 + 3 \}
\end{eqnarray*}
and hence
\begin{equation}
\gamma(E) \geq \min \left\{\frac{1}{5}(d_5 - 2),  \frac{1}{5}(3d_1 + d_2 - 8), \frac{1}{5}(d_1 +2d_2 -5) \right\}.
\end{equation}

{\it Case} 1 c: Suppose $E/N$ has a subbundle $M$ of rank 2 with $h^0(M) \geq 3$ and no line subbundle with $h^0 \geq 2$.
Then $\deg M \geq d_2$ and $\deg ((E/N)/M) \geq \frac{d}{5}$. 
If $h^0(M) \geq 4$, then $\deg M \geq d_4$ by Lemma \ref{lempr}, so
$$
\deg E \geq d_2 + d_4 + \frac{d}{5} \geq d_2 + d_3 + 4
$$
and 
\begin{equation} \label{eqn7.6}
\gamma(E) \geq \frac{1}{5} (d_2 + d_3 - 4).     
\end{equation}

Now suppose $h^0(M) = 3$. Then 
$$
\deg E \geq 2d_2 + \frac{d}{5} \geq \frac{5}{2}d_2.
$$
If $s=2$, this gives 
\begin{equation} \label{eqn7.7}
\gamma(E) \geq \frac{1}{5} \left(\frac{5}{2} d_2 - 4 \right) = \frac{d_2}{2} - \frac{4}{5}.
\end{equation}
If $s=3$, then $h^0((E/N)/M) \geq 2$. So $\deg E \geq 2d_2 + d_1$ and 
\begin{equation}
\gamma(E) \geq \frac{1}{5}(d_1 + 2d_2 - 6).
\end{equation} 
If $s = 4$, then $h^0((E/N)/M) \geq 3$ and $\deg E  \geq 3d_2$. So,
\begin{equation}
\gamma(E) \geq \frac{1}{5}(3d_2 - 8) \geq \frac{1}{5}\left(\frac{5}{2}d_2 + 3 - 8\right) = \frac{1}{2}(d_2-2).
\end{equation}

{\it Case} 2: $p=3$. 
We have $h^0(N) = 4$. So Lemma \ref{lempr} implies $\deg N \geq d_3$ and by semistability, 
$\frac{d}{5} \geq \frac{d_3}{3} \geq 3$.
Then $E/N$ has rank 2 and $h^0(E/N) \geq s+1$. So we can apply Lemma \ref{lem7.2} with $t = s-1$.

If $s=2$, Lemma \ref{lem7.2} gives 
\begin{eqnarray*}
\deg E & \geq & \min \left\{ d_3 + d_2, d_3 + d_1 + \frac{d}{5} \right\} \\
& \geq & \min \left\{ d_2 + d_3, d_1 + \frac{4}{3} d_3 \right\} \geq \min \{ d_2 + d_3, d_1 + d_4\}
\end{eqnarray*}
and hence
\begin{equation} \label{eq7.14}
\gamma(E) \geq \min \left\{ \frac{1}{5}(d_2 + d_3 -4), \frac{1}{5}(d_1 + d_4 -4) \right\}.
\end{equation}

If $s=3$, Lemma \ref{lem7.2} gives 
$$
\deg E \geq \min \left\{ d_3 + d_4, d_3 + d_2 + \frac{d}{5}, d_3 + 2d_1 \right\} \geq \min \{ d_2 + d_3 + 2, 2d_1 + d_3 \}. 
$$
So 
\begin{equation}
\gamma(E) \geq \min \left\{ \frac{1}{5}(d_2 + d_3 -4), \frac{1}{5}(2d_1 + d_3 - 6) \right\}.
\end{equation}

If $s=4$, Lemma \ref{lem7.2} gives 
$$
\deg E \geq \min \left\{ d_3 + d_6, 2d_3 + \frac{d}{5}, d_3 + d_1 + d_2 \right\} \geq d_2 + d_3 + 4.
$$
So 
\begin{equation}
\gamma(E) \geq \frac{1}{5}(d_2 + d_3 -4).
\end{equation}

{\it Case} 3: $p = 4$. 
We have $h^0(N) = 5$. We can assume by Proposition \ref{prop4.18} that $h^0(E) \geq 8$.
If $h^0(E) = 8$, then $h^0(E/N) \geq 3$ and hence $\deg (E/N) \geq d_2$. So 
$$
\deg E \geq d_2 + d_4 \geq d_2 + d_3 + 1.
$$ 
Hence
\begin{equation} \label{eq8.11}
\gamma(E) \geq \frac{1}{5}(d_2 + d_3 +1 - 6) = \frac{1}{5}(d_2 + d_3 - 5).
\end{equation}

If $h^0(E) = 9$, then $h^0(E/N) \geq 4$ and hence $\deg (E/N) \geq d_3$. So
$$
\deg E \geq d_4 + d_3.
$$
If $d_4 \geq 2d_2 - 2$, then $\deg E \geq d_2 + d_3 + d_2 - 2 \geq d_2 + d_3 + 4$.
So
\begin{equation} \label{eq8.13}
\gamma(E) \geq \frac{1}{5}(d_2 + d_3 + 4 - 8) = \frac{1}{5}(d_2 + d_3 - 4).
\end{equation}
If $d_4 \leq 2d_2 -3$, then 
$$
d_5 \leq \frac{5}{4}d_4 = d_4 + \frac{1}{4}d_4 \leq d_4 + \frac{1}{2}d_2 - \frac{3}{4}.
$$
So $\deg E \geq d_5 - \frac{1}{2}d_2 + \frac{3}{4} + d_3$ and hence
\begin{equation} \label{eq8.14}
\gamma(E) = \frac{1}{5}(\deg E - 8) \geq \frac{1}{5}(d_5 - 2),
\end{equation}
provided $d_3 - \frac{1}{2}d_2 + \frac{3}{4} > 5$, i.e. $d_3 > \frac{1}{2} d_2 + \frac{17}{4}$. 
Since $d_3 \geq d_2 + 1$ and we are assuming $d_3 \geq 9$, this holds. 

The assertion follows from the inequalities \eqref{eq8.1}, \ldots, \eqref{eq8.14}.
\end{proof}

\begin{cor} \label{cor7.6}
Let $C$ be a general curve. Then
$$
\gamma_5 = \min \left\{ \gamma_5', \frac{1}{5} \left(g - \left[ \frac{g}{6} \right] + 3\right) \right\}.
$$
\end{cor}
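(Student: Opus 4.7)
The plan is to combine the lower bound of Theorem \ref{prop8.1} with the explicit gonality sequence of a general curve (Remark \ref{rem4.2}(c)) and to match it by constructing an explicit computing bundle.

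First I would verify that the hypotheses of Theorem \ref{prop8.1} apply. For a general $C$ we have $d_r = g - \left[\frac{g}{r+1}\right] + r$ for all $r$. Writing $\frac{d_r}{r} = 1 + \frac{g - [g/(r+1)]}{r}$, a routine comparison (using $(r+2)[g/(r+1)] \le \frac{r+2}{r+1}g$ and $(r+1)[g/(r+2)] \ge (r+1)\bigl(\frac{g}{r+2}-1\bigr)$) gives $\frac{d_p}{p} \ge \frac{d_{p+1}}{p+1}$ for $1 \le p \le 4$ and $g \ge 4$, so Theorem \ref{prop8.1} applies.

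Next, for the lower bound, I would show that for a general curve the minimum of the seven explicit quantities appearing in Theorem \ref{prop8.1} is $\frac{1}{5}(d_5-2) = \frac{1}{5}(g - \left[\frac{g}{6}\right] + 3)$. Plugging in the formula for $d_r$, the inequalities $d_5 - 2 \le \frac{5}{2}(d_2 - 2)$, $d_5 - 2 \le 2d_1 + d_3 - 6$, $d_5 - 2 \le 3d_1 + d_2 - 8$, $d_5 - 2 \le d_1 + 2d_2 - 6$, $d_5 - 2 \le d_2 + d_3 - 5$, and $d_5 - 2 \le d_1 + d_4 - 4$ reduce to floor-function inequalities that can be checked directly; for the tightest ones (like $d_5 + 2 \le d_1 + d_4$, which reduces to $g + [g/6] \ge [g/2] + [g/5] + 2$), the comparison is clean for $g \ge 6$ and the remaining small cases $g = 4, 5$ are consistent with the known values of $\gamma_5$ given by Proposition \ref{prop3.2}(a) and Corollary \ref{cor4.15}.

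For the matching upper bound, I would use the dual span construction. Since $\frac{d_p}{p} \ge \frac{d_5}{5}$ for $p < 5$, Proposition \ref{prop5.1}(d) shows that for any line bundle $L$ computing $d_5$, the bundle $E_L$ defined by \eqref{eq5.1} is semistable of rank $5$ and degree $d_5$ with $h^0(E_L) \ge 6$, while $\mu(E_L) = d_5/5 \le g - 1$ by Lemma \ref{lem4.1}(d). Hence $E_L$ contributes to $\gamma_5$, giving $\gamma_5 \le \gamma(E_L) \le \frac{1}{5}(d_5 - 2)$. Together with $\gamma_5 \le \gamma_5'$ from Lemma \ref{lemma2.2}, this yields $\gamma_5 \le \min\bigl\{\gamma_5', \frac{1}{5}(g - [g/6] + 3)\bigr\}$, which combined with Theorem \ref{prop8.1} proves the equality. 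The main obstacle is the bookkeeping for the seven comparisons in the middle step, since a few of them are tight for small $g$ and require cross-checking against the precise values of $\gamma_5$ already established earlier in the paper.
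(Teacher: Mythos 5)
Your proposal is correct and follows essentially the same route as the paper: the lower bound comes from evaluating the seven quantities of Theorem \ref{prop8.1} with the general gonality sequence $d_r = g - \left[\frac{g}{r+1}\right] + r$ (small genera being handled via Proposition \ref{prop3.2}(a) and Corollary \ref{cor4.15}), and the upper bound from the dual span bundle $E_L$ of degree $d_5$. The only cosmetic difference is that the paper invokes Propositions \ref{prop5.1}(e) and \ref{cor5.2} to get a stable $E_L$ with $h^0(E_L)=6$ exactly, whereas you use \ref{prop5.1}(d) and $h^0(E_L)\ge 6$, which suffices equally well for the inequality $\gamma_5 \le \frac{1}{5}(d_5-2)$.
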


\begin{proof}
For $g=4$ this follows from Proposition \ref{prop3.2} (a).
For $g=5$ and $g=6$ we can check it directly from Corollary \ref{cor4.15}.
For $g \geq 7$,
the inequality $\gamma_5 \geq \min \{ \gamma_5', \frac{1}{5}(g - \left[ \frac{g}{6} \right] + 3) \}$ follows by evaluating 
the numbers on the right hand side of the formula in Theorem \ref{prop8.1} using Remark \ref{rem4.2} (c). 
There exists $E$ computing $\gamma_5'$ by definition. Moreover, the conditions of 
Propositions \ref{prop5.1} (e) and \ref{cor5.2} imply the existence 
of a stable bundle $E_L$ of rank 5 and degree $d_5 = g - [\frac{g}{6}] + 5$ with $h^0(E_L) = 6$.
\end{proof}

\begin{rem}
\emph{It would be interesting to determine whether $\gamma_5' \geq \frac{1}{5}(g - [\frac{g}{6}] + 3).$} 
\end{rem}

\begin{rem} \label{rem7.6}
\emph{We do not know how many of the terms on the right hand side of the inequality in Theorem \ref{prop8.1} 
are necessary.}

\emph{Under the hypotheses of the proposition, we do know that there exists a semistable bundle $E$ of degree $d_5$ with $h^0(E) \geq 6$ 
(see Proposition \ref{prop5.1}). For this $E$,
$$
\gamma(E) \leq \frac{1}{5}(d_5 - 2).
$$}

\emph{Moreover, there exists by Proposition \ref{prop5.1} a semistable bundle $N$ of rank 2 
and degree $d_2$ with $h^0(N) \geq 3$.
Suppose $d_2$ is even and let $L$ be a line bundle of degree $\frac{d_2}{2}$ with $h^0(L) \geq 1$. Then 
$$
E = N \oplus N \oplus L
$$
is semistable of degree $\frac{5}{2} d_2$ with $h^0(E) \geq 7$. So
$$
\gamma(E) \leq \frac{1}{5} \left( \frac{5}{2}d_2 - 4 \right) = \frac{1}{2} d_2 - \frac{4}{5}.
$$
Even in this case we do not know whether there always exist bundles $E$ with $\gamma(E) \leq \frac{1}{2}d_2 - 1$.}

\emph{If $\frac{d_2}{2} = \frac{d_3}{3}$, we can take $E = N \oplus N'$, where $N$ is semistable of rank 2 and 
degree $d_2$ with $h^0(N) \geq 3$ and $N'$ is semistable of rank 3 and degree $d_3$ with $h^0(N') \geq 4$. 
Then $E$ is semistable of rank 5 with $h^0(E) \geq 7$, so
$$
\gamma(E) \leq \frac{1}{5} (d_2+d_3-4).
$$}

\emph{In an attempt to construct a semistable bundle $E$ of rank 5 with 
\begin{equation} \label{eq7.16}
\gamma(E) = \frac{1}{5}(d_2 + d_3 -5),
\end{equation} 
as in the proof of the proposition, we start with a bundle $N$ of rank 4 and degree $d_4$ with $h^0(N) = 5$ 
and a line bundle $L$ of degree $d_2$ with $h^0(L) = 3$. We consider extensions
$$
0 \ra N \ra E \ra L \ra 0.
$$
If all sections of $L$ lift to $E$, then $h^0(E) = 8$ and 
$$
\gamma(E) = \frac{1}{5}(d_2 + d_4 - 6).
$$
To achieve \eqref{eq7.16}, we need $d_4 = d_3 + 1$. 
If $d_4 \leq d_2 + 4$, then
$$
\deg E \geq 2d_4 - 4 \geq 2\frac{4}{5} d_5 -4 = d_5 + \frac{3}{5}d_5 - 4,
$$
implying that $\deg E \geq d_5 + 4$ if
$d_5 \geq 12$. Hence in this case we have
\begin{equation} \label{eq8.12}
\gamma(E) \geq \frac{1}{5}(d_5 +4 - 6) = \frac{1}{5}(d_5 - 2).
\end{equation}  
This is true for $g \geq 8$. There remains the possibility that $d_4 = d_3 + 1$ and $d_3 \geq d_2 + 4$.
If one can show that in this case we cannot have $E$ semistable with $h^0(E) = 8$, then we can 
replace $\frac{1}{5}(d_2+d_3-5)$ by $\frac{1}{5}(d_2+d_3-4)$
which looks more natural.}
\end{rem}

 \section{Plane curves}
 
 For smooth plane curves the numbers $d_r$ are known by Noether's Theorem (see \cite{ci}). For stating it, note that
 for any positive integer $r$, there are uniquely determined integers $\alpha, \beta$ with 
 $\alpha \geq 1, 0 \leq \beta \leq \alpha$ such that
 $$
 r = \frac{\alpha(\alpha + 3)}{2} - \beta.
 $$
 The reason for this is that for any $\alpha$,
 $$
 \frac{\alpha(\alpha + 3)}{2} - (\alpha + 1) = \frac{(\alpha-1)(\alpha+2)}{2}.
 $$
 {\bf Noether's Theorem}. {\it Let $C$ be a smooth plane curve of degree $\delta$. For any positive integer} $r$,
 $$
 d_r = \left\{ \begin{array}{ccl}
                \alpha \delta - \beta & \textit{if} & r < g = \frac{(\delta -1)(\delta - 2)}{2},\\
                r + \frac{(\delta -1)(\delta - 2)}{2}  & \textit{if} & r \geq g.
                \end{array} \right.
 $$               
 In particular $d_1 = \delta - 1$, $d_2 = \delta$ and $d_2$ computes $\gamma_1 = \delta -4$. Note that 
 $r < g = \frac{(\delta - 1)(\delta - 2)}{2}$ is equivalent to $\alpha \leq \delta - 3$.

\begin{prop} \label{prop9.1}
Let $C$ be a smooth plane curve of degree $\delta \geq 5$. Then\\
\emph{(a)}
$$
\gamma_2 = \left\{ \begin{array}{lcl}
                   1 & \text{if} & \delta = 5,\\
                   \frac{\delta}{2} - 1 & \text{if} & \delta \geq 6.
                   \end{array} \right.
$$
\emph{(b)} \hspace*{3.1cm} $\gamma_2' = \gamma_1 = \delta - 4.$
\end{prop}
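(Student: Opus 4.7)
The plan is to derive both parts directly from the general rank-two results of Section 5, using Noether's theorem to evaluate the relevant entries of the gonality sequence.

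First I would record the values of $d_r$ supplied by Noether's theorem that are needed here: $d_1=\delta-1$, $d_2=\delta$, and $d_4=2\delta-1$. The last comes from writing $4=\tfrac{\alpha(\alpha+3)}{2}-\beta$ with $\alpha=2,\beta=1$, which is allowed in the range $r<g$ provided $\alpha\le\delta-3$, i.e.\ $\delta\ge5$. The classical Clifford index is $\gamma_1=\delta-4$, computed by $d_2$.

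For part (a), I would simply invoke Corollary \ref{cor4.9}, which gives
\[
\gamma_2=\min\left\{\gamma_1,\tfrac{d_2}{2}-1\right\}=\min\left\{\delta-4,\tfrac{\delta}{2}-1\right\}.
\]
A one-line comparison shows $\delta-4\le\tfrac{\delta}{2}-1$ iff $\delta\le6$. For $\delta=5$ this gives $\gamma_2=\gamma_1=1$; for $\delta=6$ both quantities equal $2=\tfrac{\delta}{2}-1$; and for $\delta\ge7$ the minimum is $\tfrac{\delta}{2}-1$. Combining these cases gives exactly the stated formula.

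For part (b), I would use Theorem \ref{prop4.8} together with Lemma \ref{lemma2.2}. The theorem yields
\[
\gamma_2'\ge\min\left\{\gamma_1,\tfrac{d_4}{2}-2\right\}=\min\left\{\delta-4,\delta-\tfrac{5}{2}\right\}=\delta-4=\gamma_1,
\]
while Lemma \ref{lemma2.2} gives the reverse inequality $\gamma_2'\le\gamma_1'=\gamma_1$. The two inequalities together give $\gamma_2'=\gamma_1=\delta-4$.

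There is no real obstacle: the entire argument is an application of the general rank-two bounds, and the only input specific to plane curves is the explicit computation of $d_2$ and $d_4$ via Noether's theorem. The only mild point worth checking is that $d_4=2\delta-1$ really is given by the "small $r$" branch of Noether's formula, which requires $\delta\ge5$; this is precisely the hypothesis of the proposition.
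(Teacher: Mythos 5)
Your proposal is correct and follows essentially the same route as the paper: part (a) is exactly the paper's appeal to Corollary \ref{cor4.9}, and part (b) is the paper's combination of Theorem \ref{prop4.8} (with $d_4=2\delta-1$) and Lemma \ref{lemma2.2}. The only difference is that you spell out the case comparison and the Noether computation of $d_4$ in slightly more detail, which the paper leaves implicit.
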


\begin{proof} (a) is a special case of Corollary \ref{cor4.9}.
For (b), we have $\gamma_1 = \delta - 4$ and $d_4 = 2 \delta -1$.  By Theorem \ref{prop4.8},
$\gamma_2' \geq \min \{ \delta - 4, \delta - \frac{5}{2} \} = \delta - 4.$
So (b) follows from Lemma \ref{lemma2.2}.  
\end{proof}

\begin{rem}
\emph{Part (a) of Proposition \ref{prop9.1} holds also for a curve which admits as a plane model a 
general nodal curve of degree $\delta \geq 7$ with $\nu \leq \frac{1}{2}(\delta^2 -7 \delta +14)$ nodes.
This follows from the third paragraph on page 90 of \cite{c} stating that $\gamma_1 = \delta -4$ in this case.
Also $d_1 = \delta -2$ and $d_2 = \delta$.}

\emph{Hence there exist curves of all genera $g \geq 8$ with
$$
\gamma_2 = \frac{\gamma_1}{2} + 1 < \gamma_1 = d_2 - 4.
$$
In particular these curves are not general and both $d_1$ and $d_2$ compute $\gamma_1$.} 
\end{rem}

Theorem \ref{prop5.4} does not apply for plane curves, since $\frac{d_2}{2} < \frac{d_3}{3}$ for $\delta \geq 5$. Indeed,
its statement is wrong for plane curves. Instead we have
\begin{prop} \label{prop9.2}
Let $C$ be a smooth plane curve of degree $\delta \geq 5$. Then
$$
\gamma_3 = \min \left\{ \gamma_3', \frac{1}{3}\left( \left[ \frac{3\delta+1}{2} \right] - 2 \right)  \right\}.
$$
\end{prop}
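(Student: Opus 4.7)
The approach parallels Theorem \ref{prop5.4}, adjusted because $\frac{d_2}{2}<\frac{d_3}{3}$ on plane curves. By Noether's theorem the relevant values are $d_1=\delta-1$, $d_2=\delta$, $d_3=2\delta-2$; crucially, a semistable rank-$2$ subbundle of slope $\delta/2$ constrains the slope of $E$ from below only by $\delta/2$, allowing $\deg E$ as small as $\lceil 3\delta/2\rceil=\bigl[\tfrac{3\delta+1}{2}\bigr]$, which is strictly less than $d_3$ for $\delta\geq 6$, accounting for the extra term.

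For the lower bound I would carry out a case analysis on the subbundle structure of a semistable $E$ of rank $3$ with $h^0(E)\in\{4,5\}$ (the case $h^0(E)\geq 6$ is subsumed in $\gamma_3'$). Write $h^0(E)=3+s$. If $E$ has no proper subbundle $N$ with $h^0(N)>\rk N$, Lemma \ref{lempr} gives $\deg E\geq d_{3s}\geq d_3$ and hence $\gamma(E)\geq\frac{1}{3}(d_3-2)$. If $E$ admits a line subbundle with $h^0\geq 2$, Proposition \ref{prop4.11}(c) gives $\gamma(E)>\gamma_1\geq\gamma_3'$. In the remaining case $E$ contains a rank-$2$ subbundle $F$ with $h^0(F)\geq 3$ and no line subbundle with $h^0\geq 2$; Lemma \ref{lempr} applied to $F$ yields $\deg F\geq d_2=\delta$, semistability of $E$ gives $\deg E\geq\bigl[\tfrac{3\delta+1}{2}\bigr]$, and the required bound follows for $s=1$. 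For $s=2$ the additional constraint $h^0(E/F)\geq 2$ forces $\deg(E/F)\geq d_1$, hence $\deg E\geq 2\delta-1$ and $\gamma(E)\geq(2\delta-5)/3$, which dominates the claimed bound for $\delta\geq 6$; the residual case $\delta=5$ is covered by Proposition \ref{prop3.2}(a) (then $\gamma_1=1$ and $\gamma_3=\gamma_3'=1$).

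For the upper bound, $\gamma_3\leq\gamma_3'$ is Lemma \ref{lemma2.2}, so it remains to construct a semistable $E$ attaining $\frac{1}{3}\bigl(\bigl[\tfrac{3\delta+1}{2}\bigr]-2\bigr)$. Let $F=E_{L_0}$ be the dual-span bundle \eqref{eq5.1} of the hyperplane line bundle $L_0$; Proposition \ref{prop5.1}(d) makes $F$ semistable of rank $2$ and degree $\delta$ with $h^0(F)\geq 3$, since $d_1\geq d_2/2$. For $\delta$ even take $E=F\oplus L'$ with $L'=\cO_C(D)$ for any effective divisor $D$ of degree $\delta/2$; slopes match, so $E$ is semistable of degree $3\delta/2$ with $h^0(E)\geq 4$. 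For $\delta$ odd I would construct $E$ as a non-split extension $0\to F\to E\to L'\to 0$ with $L'$ of degree $(\delta+1)/2$ and $h^0(L')\geq 1$, selecting $\xi\in\Ext^1(L',F)$ so that a fixed section of $L'$ lifts to $E$; a cup-product computation gives connecting-map kernel dimension $\geq\delta-2>0$, so such $\xi\neq 0$ exists.

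The main obstacle will be verifying semistability of the odd-$\delta$ extension, where the slopes no longer match. Any destabilising line subbundle of $E$ either lies inside $F$ (impossible since $\mu(F)=\delta/2<\mu(E)=(3\delta+1)/6$) or injects into $L'$; the only such injection of slope $>\mu(E)$ is $L'$ itself, which would force the extension to split. For a rank-$2$ destabilising subbundle $N\neq F$, the sheaf $N\cap F$ is a line subsheaf of $F$ of degree $\leq\delta/2$ by semistability of $F$, while the inequality $\mu(N)>\mu(E)$ confines $\deg(N\cap F)$ to the half-open interval $\bigl((3\delta-1)/6,\,\delta/2\bigr]$, which has length $1/6$ and contains no integer for odd $\delta$. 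This integrality obstruction is precisely why the formula closes cleanly.
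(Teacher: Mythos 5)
Your proposal is correct and follows essentially the same route as the paper's proof: the lower bound comes from Lemma \ref{lempr} together with a case analysis on the minimal subbundle with $h^0>\mbox{rk}$, and the upper bound from a semistable rank-two bundle of degree $d_2=\delta$ with $h^0=3$ plus a line bundle of degree roughly $\delta/2$ (a direct sum for $\delta$ even, a section-lifting extension for $\delta$ odd). One small omission in the lower bound: in the case $h^0(E)=5$ with a rank-two subbundle $F$ and no line subbundle having $h^0\geq 2$, your step ``$h^0(E/F)\geq 2$ forces $\deg(E/F)\geq d_1$'' presupposes $h^0(F)=3$; if $h^0(F)\geq 4$ you must instead apply Lemma \ref{lempr} to $F$ to get $\deg F\geq d_4=2\delta-1$, whence semistability gives $\deg E\geq\frac{3}{2}d_4$, which is more than enough --- this is exactly the subcase the paper treats separately. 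In the other direction you go beyond the published argument: the paper simply asserts that the odd-$\delta$ extension is stable, whereas your integrality argument on $\deg(N\cap F)$ actually verifies it; just add that a rank-two subsheaf $N$ mapping to zero in $L'$ lies in $F$ and is disposed of trivially by $\deg N\leq\deg F=\delta$.
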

\begin{proof}
If $\delta = 5$, then both sides of the equality are 1 by Proposition \ref{prop3.2} (a). So assume $\delta \geq 6$.
From the definition of $\gamma_3'$ we may assume that $E$ is a semistable bundle of rank 3 and slope $\leq g-1$ with $4 \leq h^0(E) \leq 5$.

Suppose first that $h^0(E) = 4$.
If $E$ has no proper subbundle $N$ with $h^0(N) > \rk N$, then Lemma \ref{lempr} implies that
\begin{equation*}
\deg E \geq d_3 = 2 \delta - 2.
\end{equation*}
If $E$ has a line subbundle $N$ with $h^0(N) \geq 2$, then $\deg N \geq d_1$ and so by semistability,
\begin{equation*} 
\deg E \geq 3d_1 = 3 \delta - 3.
\end{equation*}
Suppose $E$ has a subbundle $N$ of rank 2 with $h^0(N) \geq 3$ and no line subbundle with $h^0 \geq 2$.
If $h^0(N) = 4$, then $\deg N \geq d_4$ by Lemma \ref{lempr} and hence by semistability,
\begin{equation*} \label{eq9.3}
\deg E \geq \frac{3}{2}d_4 = 3 \delta - \frac{3}{2}.
\end{equation*}
If $h^0(N) = 3$, then $\deg N \geq d_2 = \delta$ by Lemma \ref{lempr} and so
\begin{equation*} \label{eq9.4}
\deg E \geq \frac{3}{2} \delta.
\end{equation*}
Hence, if $h^0(E) = 4$ we get
\begin{equation} \label{eqn9.1}
\gamma(E) = \frac{1}{3}(\deg E - 2) \geq \frac{1}{3}\left(\left[\frac{3\delta+1}{2} \right] - 2 \right).
\end{equation}

Now suppose $h^0(E) = 5$. 
If $E$ has no proper subbundle $N$ with $h^0(N) > \rk N$, then Lemma \ref{lempr} implies that
\begin{equation*} 
\deg E \geq d_6 = 3 \delta - 3.
\end{equation*}
If $E$ has a line subbundle $N$ with $h^0(N) \geq 2$, then $\deg N \geq d_1$ and so by semistability,
\begin{equation*} 
\deg E \geq 3d_1 = 3 \delta - 3.
\end{equation*} 
Suppose $E$ has a subbundle $N$ of rank 2 with $h^0(N) \geq 3$ and no line subbundle with $h^0 \geq 2$. 
If $h^0(N) \geq 4$, then $\deg N \geq d_4 = 2 \delta - 1$ by Lemma \ref{lempr} and so
\begin{equation*} 
\deg E \geq \frac{3}{2}d_4 = 3 \delta - \frac{3}{2}.
\end{equation*} 
If $h^0(N) = 3$, then $\deg N \geq d_2 = \delta$ and $\deg (E/N) \geq d_1 = \delta -1$ and so
$$
\deg E \geq 2 \delta - 1.
$$
Hence, if $h^0(E) = 5$ we get
\begin{equation} \label{eq9.2}
\gamma(E) = \frac{1}{3}(\deg E - 4) \geq \frac{1}{3}(2 \delta - 5).
\end{equation}
Since $2 \delta - 5 \geq [\frac{3\delta+1}{2}] - 2$ for $\delta \geq 6$, \eqref{eqn9.1} and \eqref{eq9.2} imply the inequality 
$\gamma_3 \geq \min \left\{ \gamma_3', \frac{1}{3}\left( \left[ \frac{3\delta+1}{2} \right] - 2 \right)  \right\}$.

To show equality we have to show that the bound 
$\frac{1}{3}\left( \left[ \frac{3\delta+1}{2} \right] - 2 \right)$ can be attained.
Since $d_1 > \frac{d_2}{2}$, Theorem \ref{pr4.9} (a) implies the existence of a semistable bundle $N$ of rank 2 and degree 
$d_2 = \delta$ with $h^0(N) = 3$.

If $\delta$ is even take an effective line bundle $M$ of degree $\frac{\delta}{2}$. Then
$$
E = N \oplus M
$$
is semistable, has degree $\frac{3\delta}{2}$ and $h^0(E) = 4$.

If $\delta$ is odd, choose an effective line bundle $M$ of degree $\frac{\delta+1}{2}$. We have $h^0(M) = 1$, 
since $d_1 = \delta -1$. Any non-zero section of $M$ induces a map
$H^1(M^* \otimes N) \ra H^1(\cO_C \otimes N)$. Comparing dimensions one checks that this map 
has a non-trivial kernel. Every non-trivial extension
$$
0 \ra N \ra E \ra M \ra 0
$$
in the kernel of this map defines a bundle $E$ of rank 3 and degree $\frac{3\delta + 1}{2}$ with $h^0(E) = 4$. 
Since any such extension is stable, this completes the proof.
\end{proof}  

\begin{prop} \label{prop9.4}
Let $C$ be a smooth plane curve of degree $\delta \geq 5$. Then 
$$
\gamma_4 = \min \left\{ \gamma_4', \frac{\delta}{2} - 1 \right\}.
$$ 
\end{prop}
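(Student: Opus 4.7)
The plan is to show that this proposition is a direct consequence of Theorem \ref{prop7.1}, once one checks its numerical hypothesis for plane curves and then simplifies the resulting minimum using Noether's formulas.

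First I would compute $d_3$ and $d_4$ from Noether's theorem. Writing $r = \frac{\alpha(\alpha+3)}{2} - \beta$, we get $\alpha=2$, $\beta=2$ for $r=3$ and $\alpha=2$, $\beta=1$ for $r=4$, so $d_3 = 2\delta - 2$ and $d_4 = 2\delta - 1$. Next I would verify the hypothesis $\frac{d_3}{3} \geq \frac{d_4}{4}$ of Theorem \ref{prop7.1}: this reads $\frac{2\delta-2}{3} \geq \frac{2\delta-1}{4}$, equivalently $8\delta - 8 \geq 6\delta - 3$, i.e.\ $2\delta \geq 5$, which holds since $\delta \geq 5$.

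Having checked the hypothesis, Theorem \ref{prop7.1} applies and yields
$$
\gamma_4 = \min \left\{ \gamma_4', \, \tfrac{1}{4}(d_4 - 2), \, \tfrac{1}{2}(d_2 - 2) \right\} = \min \left\{ \gamma_4', \, \tfrac{2\delta - 3}{4}, \, \tfrac{\delta - 2}{2} \right\}.
$$
Finally I would observe that the third term dominates the second: since $\tfrac{\delta-2}{2} = \tfrac{2\delta-4}{4} < \tfrac{2\delta-3}{4}$, the middle term can be discarded from the minimum. Thus $\gamma_4 = \min\{\gamma_4', \tfrac{\delta}{2}-1\}$, which is exactly the assertion.

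There is essentially no obstacle; the content of the argument lies entirely in Theorem \ref{prop7.1} and the explicit knowledge of the gonality sequence provided by Noether's theorem. The only mild care needed is to confirm that Theorem \ref{prop7.1} is genuinely applicable, i.e.\ that its single numerical hypothesis is compatible with the plane-curve gonality sequence despite the fact that the stronger monotonicity $\frac{d_p}{p}\ge\frac{d_{p+1}}{p+1}$ fails at $p=2$ for such curves (which is why Theorem \ref{prop5.4} did \emph{not} apply in Proposition \ref{prop9.2}).
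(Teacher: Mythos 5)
Your proposal is correct and follows exactly the paper's own argument: the paper likewise computes $d_3 = 2\delta-2$, $d_4 = 2\delta-1$, notes $\frac{d_3}{3} > \frac{d_4}{4}$, and invokes Theorem \ref{prop7.1}. Your explicit simplification of the resulting minimum (discarding $\frac{1}{4}(d_4-2)$ in favour of $\frac{1}{2}(d_2-2)=\frac{\delta}{2}-1$) is the step the paper leaves implicit, and it is carried out correctly.
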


\begin{proof}
We have $d_3 = 2 \delta -2$ and $d_4 = 2 \delta -1$. So $\frac{d_3}{3} > \frac{d_4}{4}$ and Theorem \ref{prop7.1} 
applies to give the assertion. 
\end{proof}

\begin{prop} \label{prop8.5}
Let $C$ be a smooth plane curve of degree $\delta \geq 5$. Then
$$
\gamma_5 = \min \left\{ \gamma_5', \frac{2}{5}(\delta -1) \right\}.
$$
\end{prop}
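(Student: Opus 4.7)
The plan is to follow the proof of Theorem \ref{prop8.1}, specialising the gonality values via Noether's theorem, and to handle separately the one step where the hypothesis $d_p/p \geq d_{p+1}/(p+1)$ fails. For the upper bound, I compute $d_5 = 2\delta$ (from $\alpha = 2, \beta = 0$) and verify by direct substitution of $d_1 = \delta-1$, $d_2 = \delta$, $d_3 = 2\delta-2$, $d_4 = 2\delta-1$ that $d_p/p \geq d_5/5 = 2\delta/5$ for $p < 5$. Proposition \ref{prop5.1}(d) then produces the dual span $E_L$ of a line bundle $L$ computing $d_5$ as a semistable rank-$5$ bundle with $h^0 \geq 6$ and slope $2\delta/5 \leq g-1$, giving $\gamma_5 \leq \gamma(E_L) = \tfrac{1}{5}(d_5-2) = \tfrac{2}{5}(\delta-1)$. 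Combined with the trivial $\gamma_5 \leq \gamma_5'$, this furnishes $\gamma_5 \leq \min\{\gamma_5', \tfrac{2}{5}(\delta-1)\}$.

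For the lower bound, the cases $\delta = 5, 6$ are immediate: Proposition \ref{prop3.2}(a) gives $\gamma_5 = \gamma_5' = 1$ for $\delta = 5$, while for $\delta = 6$ Corollary \ref{cor4.15} and Proposition \ref{prop3.7} give $\gamma_5 = 2 = \tfrac{2}{5}(\delta-1) \leq \gamma_5'$. For $\delta \geq 7$ we have $\gamma_1 \geq 3$ and the $d_r$ are large enough to run the arguments of Theorem \ref{prop8.1}. Letting $E$ be semistable of rank $5$ contributing to $\gamma_5$ with $h^0(E) = 5 + s$, the case $s \geq 5$ gives $\gamma(E) \geq \gamma_5'$ by definition, and Propositions \ref{prop4.11} and \ref{prop4.18} reduce the range $1 \leq s \leq 4$ to situations where $E$ admits a minimal subbundle $N$ of rank $p \in \{2,3,4\}$ with $h^0(N) = p+1$. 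I then carry out the case analysis of Theorem \ref{prop8.1} in this setting, substituting the Noether values throughout, and verify that each of the seven bounds appearing there is at least $\tfrac{2}{5}(\delta-1)$: for instance $\tfrac{1}{2}(d_2-2) = \tfrac{1}{2}(\delta-2) \geq \tfrac{2}{5}(\delta-1)$ for $\delta \geq 6$, $\tfrac{1}{5}(d_1+2d_2-6) = \tfrac{1}{5}(3\delta-7) \geq \tfrac{2}{5}(\delta-1)$ for $\delta \geq 5$, and similarly for the remaining five.

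The main obstacle is that the hypothesis $d_2/2 \geq d_3/3$ of Theorem \ref{prop8.1} fails for plane curves (as $\delta/2 < (2\delta-2)/3$ whenever $\delta \geq 5$), and this inequality is invoked at one step of that proof — in Case 1b with $s = 3$, where $\deg E \geq 2d_1 + d_3$ is deduced from $\deg E \geq 2d_1 + \tfrac{3}{2}d_2$ using $\tfrac{3}{2}d_2 \geq d_3$. I bypass this by retaining the weaker intermediate estimate $\deg E \geq 2d_1 + \tfrac{3}{2}d_2 = \tfrac{7}{2}\delta - 2$, which yields $\gamma(E) \geq (7\delta-16)/10 \geq \tfrac{2}{5}(\delta-1)$ for $\delta \geq 4$, sufficient for our purposes. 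A short check confirms that no other step of the proof breaks down under the plane-curve specialisation, and combining the two bounds yields the asserted equality.
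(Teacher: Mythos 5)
Your proposal is correct and follows essentially the same route as the paper: both arguments specialise the case analysis of Theorem \ref{prop8.1} using Noether's values, identify the single step (inequality \eqref{eq8.4}, Case 1b with $s=3$) where $\frac{d_2}{2}\geq\frac{d_3}{3}$ is invoked, replace $\frac{1}{5}(2d_1+d_3-6)$ by the weaker bound $\frac{1}{5}(2d_1+\frac{3}{2}d_2-6)=\frac{1}{5}(\frac{7}{2}\delta-8)$, check that every remaining term is at least $\frac{2}{5}(\delta-1)$, and realise the bound via the dual span $E_L$ of a line bundle computing $d_5=2\delta$. The only cosmetic differences are that you dispatch $\delta=6$ via Corollary \ref{cor4.15} rather than the general argument and content yourself with $h^0(E_L)\geq 6$ from Proposition \ref{prop5.1}(d), which suffices for the upper bound.
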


\begin{proof}
If $\delta = 5$, then, since $\gamma_1 = 1$, we get from Proposition \ref{prop3.2} (a) that $\gamma_5 = \gamma_5' = 1$
and the result is obvious. Suppose $\delta \geq 6$. From Noether's Theorem we get
$$
d_1 = \delta -1,\; d_2 = \delta,\; d_3 = 2\delta -2, \; d_4 = 2\delta - 1 \; \mbox{and} \; d_5 = 2 \delta.
$$
It follows that $\frac{d_p}{p} > \frac{d_5}{5}$ for all $p < 5$ and $\frac{d_p}{p} \geq \frac{d_{p+1}}{p+1}$ 
except when $p=2$. Hence Proposition \ref{prop4.11} and Lemma \ref{lem7.1} are valid. The 
only place in the proof of Theorem \ref{prop8.1}, 
where the assumption $\frac{d_2}{2} \geq \frac{d_3}{3}$ is used, is in the proof of inequality \eqref{eq8.4}. If we replace 
$\frac{1}{5}(2d_1 + d_3 -6)$ by $\frac{1}{5}(2d_1 + \frac{3}{2} d_2 -6)$, the proof is valid. With this modification, 
Theorem
\ref{prop8.1} becomes
\begin{eqnarray*}
\gamma_5 &\geq & \min \left\{  \gamma_5', \frac{1}{2}(\delta -2), \frac{1}{5}(2\delta -2), 
\frac{1}{5}(\frac{7}{2}\delta -8), \frac{1}{5}(4\delta -11) , \frac{1}{5}(3\delta-7) \right\}\\
&=& \min \left\{ \gamma_5', \frac{1}{5}(2\delta -2) \right\}.
\end{eqnarray*}

By the definition of $\gamma_5'$ the equality $\gamma(E) = \gamma_5'$ can be attained.
The equality $\gamma(E) = \frac{1}{5} ( 2 \delta -2)$ is attained by the bundle $E_L$, where $L$ is a line bundle 
of degree $d_5 = 2 \delta$ with $h^0(L) = 6$. Since $\frac{d_p}{p} > \frac{d_5}{5}$ for all $p < 5$, the 
bundle $E_L$ is stable by Proposition \ref{prop5.1} (e) and hence $h^0(E_L) = 6$ by Theorem \ref{pr4.9}.
\end{proof}

\section{Problems}

In this section we present some problems which are related to the contents of this paper.

\begin{pr} \label{pr9.1}
Find an improved lower bound for $\gamma_5$ and good lower bounds for $\gamma_n, n \geq 6$.
\end{pr}
We expect that the term $\frac{1}{p}(d_p -2)$ for $p|n$ will appear (see Lemma \ref{lem4.10} and 
Proposition \ref{prop4.11}). There is some evidence that terms of the form 
$\frac{1}{n}(d_p + d_{n-p} - 4)$ may appear, but it is possible that a careful argument may eliminate them.
See also Remark \ref{rem7.6}.

\begin{pr}  \label{pr9.2}
Find good lower bounds for $\gamma_n'$ for $n \geq 3$.
\end{pr}

In relation to this we have the conjecture

\begin{conj} \label{con9.3}
$\gamma_n' = \gamma_1$.
\end{conj}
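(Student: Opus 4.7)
The plan is to establish $\gamma_n' \geq \gamma_1$, since the reverse inequality is immediate from Lemma \ref{lemma2.2} applied with $p=1$. By Proposition \ref{prop3.3}(b), this is equivalent to Mercat's Conjecture \ref{conj3.1}(i) for bundles contributing to $\gamma_n'$. Throughout, by Lemma \ref{lem1}, we may restrict attention to $E$ with $\mu(E) \leq g-1$.

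I would proceed by induction on $n$, the base case $n=1$ being trivial and the case $n=2$ being essentially Theorem \ref{prop4.8} (combined with a gonality-sequence bound showing $d_4 \geq 2\gamma_1 + 4$ in generic situations). Let $E$ be semistable of rank $n$ with $h^0(E) = 2n + t$, $t \geq 0$. Choose a subbundle $N \subset E$ of minimal rank $p$ subject to $h^0(N) > p$; by Lemma \ref{lempr} applied inside $N$, we get $\deg N \geq d_{p(h^0(N)-p)}$. If no such $N$ exists, Lemma \ref{lempr} applied directly to $E$ yields $\deg E \geq d_{n(n+t)}$, which combined with the Clifford bound $d_r \geq \gamma_1 + 2r$ of Lemma \ref{lem4.4} should force $\gamma(E) \geq \gamma_1$ by a direct computation on the gonality sequence.

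If such $N$ exists, the strategy is to split the Clifford index via
\[
n \gamma(E) = p\, \gamma(N) + (n-p)\, \gamma(E/N),
\]
and to apply the inductive hypothesis to both $N$ (semistable by the choice of $p$) and $E/N$ (semistable since $E$ is and $\mu(N) \leq \mu(E)$ with equality possible). The minimality of $p$ forces every proper subbundle of $N$ to have at most $\mathrm{rk}$ sections, so a Paranjape--Ramanan style estimate, combined with the inductive bound $\gamma(N) \geq \gamma_1$ (whenever $N$ contributes to $\gamma_p'$, i.e.\ $h^0(N) \geq 2p$), gives the required control on the $N$-side.

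The main obstacle is the quotient: even if $h^0(E) \geq 2n$ and $h^0(N) \geq p+1$, one only obtains $h^0(E/N) \geq 2n - (p+1) - \text{(non-lifting sections)}$, which may be strictly less than $2(n-p)$, so $E/N$ need not contribute to $\gamma_{n-p}'$ and the inductive hypothesis fails. Bridging this gap requires Mercat's Conjecture \ref{conj3.1}(ii) for $E/N$, which is itself open. A plausible way forward would be to refine Lemma \ref{lempr} to take the slope of $E$ into account (strengthening the dual span analysis of Proposition \ref{prop5.1}), or to argue via the geometry of the rational map $C \to \mathrm{Gr}(n, h^0(E))$ induced by $E$, along the lines of \cite{pro}, to obtain a ``global'' gonality-type lower bound on $\deg E$ bypassing the subbundle/quotient decomposition altogether. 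Without such a refinement, the argument above only reduces the full conjecture to the case in which the quotients $E/N$ have anomalously many sections relative to their degree — exactly the range where Mercat's conjecture is hardest.
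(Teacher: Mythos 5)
This statement is Conjecture \ref{con9.3}; the paper does not prove it and explicitly says that, if true, ``a complete proof seems a long way off.'' So there is no proof in the paper to compare yours against, and your proposal --- which you candidly acknowledge is incomplete --- does not close the gap either. The obstruction you identify is exactly the right one: after extracting a subbundle $N$ of minimal rank $p$ with $h^0(N)>p$, you cannot guarantee $h^0(E/N)\geq 2(n-p)$, so the quotient need not contribute to $\gamma_{n-p}'$ and the induction stalls precisely where Mercat's Conjecture \ref{conj3.1}(ii) would be needed. This is the same impasse the authors face; what they actually prove are partial cases ($\gamma_n'=\gamma_1$ when $\gamma_1\leq 2$ via Propositions \ref{prop3.2} and \ref{prop3.7}; $\gamma_2'=\gamma_1$ for $\gamma_1\leq 4$ and for smooth plane curves via Proposition \ref{prop3.6} and Proposition \ref{prop9.1}; and Conjecture \ref{conj3.1} for $d\leq d_n$ via Corollary \ref{cor4.16}), all by exactly the kind of subbundle/quotient analysis you sketch.

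Two smaller corrections to your outline. First, the displayed relation $n\gamma(E)=p\,\gamma(N)+(n-p)\,\gamma(E/N)$ is only an inequality $n\gamma(E)\geq p\,\gamma(N)+(n-p)\,\gamma(E/N)$, since $h^0(E)\leq h^0(N)+h^0(E/N)$ and the two sides differ by $2\bigl(h^0(N)+h^0(E/N)-h^0(E)\bigr)/n$; fortunately this is the direction you need. Second, neither $N$ nor $E/N$ is automatically semistable: minimality of $p$ alone does not give semistability of $N$, and a quotient of a semistable bundle is semistable only when its slope is forced to equal $\mu(E)$. In the paper's Theorem \ref{pr4.9} this is salvaged because the chain of inequalities $\mu(N)\geq \frac{d_p}{p}\geq\frac{d_n}{n}=\mu(E)$ collapses to equality, pinning $\mu(N)=\mu(E)$; for a general $E$ contributing to $\gamma_n'$ no such collapse is available, and this is a second genuine gap in addition to the one you name.
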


The conjecture is the most important point of Mercat's Conjecture \ref{conj3.1} (see Proposition \ref{prop3.3}).
If this is true, a complete proof seems a long way off. In many ways it would be more interesting if 
the conjecture were false, since this would imply the existence of new semistable bundles reflecting aspects of the geometry
of the curve $C$ not detected by classical Brill-Noether theory. A small piece of evidence in favour of the conjecture 
is presented by Proposition \ref{prop9.1} which shows that there exist curves of arbitrary $\gamma_1$ for which 
$\gamma_2' = \gamma_1$.

\begin{pr}  \label{pr9.4}
One can define Clifford indices $\gamma_n^s$ and ${\gamma_n^s}'$ by restricting to stable bundles. 
Of course $\gamma_n \leq \gamma_n^s$ and $\gamma_n' \leq {\gamma_n^s}'$. Find examples for which we have strict
inequalities or prove there are none.
\end{pr}

\begin{pr}  \label{pr9.5}
Obtain more information about the gonality sequence of a curve $C$. 
\end{pr}

This should contribute to Problem \ref{pr9.1}. For example, the knowledge of the gonality sequence for smooth plane 
curves enabled us to prove Proposition \ref{prop8.5} which is a significant improvement on Theorem \ref{prop8.1}.\\

We have seen that the classical Clifford index alone is not sufficient to determine $\gamma_n$. However, the following problem remains open.

\begin{pr}\label{pr9.5a}
Show that $\gamma_n$ is determined by the gonality sequence or find counter-examples.
\end{pr}

Let $E$ be a semistable bundle on $C$ and $V$ a subspace of $H^0(E)$ which generates $E$. Let $M_{V,E}$ be defined by \eqref{eq0.2}.
It has been conjectured by Butler \cite{bu} that for general $C$, $E$, $V$, the bundle $M_{V,E}$ is semistable. (Actually this is a slight modification of Butler's original conjecture \cite[Conjecture 2]{bu} which is set in the context of coherent systems.)
In \cite[Theorem 1.2]{b} Butler proved for any $C$ and any semistable $E$ that $M_{V,E}$ is semistable if $\mu(E) \geq 2g$. 
There are many similar results in the literature (a summary of the current state of knowledge for the case where $C$ is general and $E$ is a line bundle may be found in \cite[Section 9]{bbn}). Our Proposition \ref{prop5.1} is a further example where $C$ is not required to be general.

\begin{pr} \label{pr9.6}
Give a proof of Butler's conjecture or obtain counter-examples for either general or special curves.
\end{pr}

A solution of this conjecture would be interesting not only in its own right but because the bundles $M_{V,E}$ are related to syzygies and to Picard bundles (see in particular \cite{el,li}, where it is shown that, for sufficiently large degree, the bundle $M_{H^0(E),E}$ is, up to twisting by a line bundle, the restriction of a Picard bundle to the curve $C$ embedded in the relevant moduli space). A further observation is that, if we use \eqref{eq0.2} to map $C$ to the Grassmannian $G$  of $n$-dimensional quotients of $V$,  then $E$ and $M_{V,E}^*$ are the pullbacks of the tautological quotient bundle and subbundle respectively, so $E\otimes M_{V,E}$ is isomorphic to the pullback of the tangent bundle of $G$. Thus Butler's conjecture implies semistability of this pullback (see \cite[Theorem 4.5]{bo} for a recent result in this direction).\\

We next move on to consider extensions. Given an exact sequence
\begin{equation} \label{eq9.1}
0 \ra L \ra E \ra M \ra 0
\end{equation}
with $L$ and $M$ line bundles, there is a geometric criterion for lifting a section of $M$ to $E$ (see \cite{lna}). 
In our context this leads to several problems.

\begin{pr} \label{pr9.7}
Extend this to a usable criterion for the case when $L$ and $M$ are vector bundles.
\end{pr}

\begin{pr} \label{pr9.8}
Try to find a usable criterion for lifting several sections.
\end{pr}

Given vector bundles $L$ and $M$, the classes of nontrivial extensions \eqref{eq9.1} are parametrized by the projective space
$P = P(\mbox{Ext}^1(M,L))$. The extensions with $E$ semistable form an open set $U$ of $P$, whereas the extensions for which a
given number of independent sections of $M$ are liftable to $E$ form a closed subset $V$ of $P$.

\begin{pr} \label{pr9.9}
Determine conditions under which $U \cap V$ is non-empty.
\end{pr}
If $\dim V > \dim(P \setminus U)$, the intersection $U \cap V$ is clearly non-empty. This has been used in several papers, however 
there are many situations in which the dimensional condition does not hold.\\

\begin{pr}
Improve the lemma of Paranjape and Ramanan (Lemma \ref{lempr}) and determine conditions under which the converse is true.
\end{pr}

This would be very useful for improving some of the bounds for $\gamma_n$ and constructing 
bundles $E$ with low values for $\gamma(E)$.\\

We finish with one very specific problem.

\begin{pr} \label{pr9.10}
What is the minimal value of $d$ for which there exists a stable (semistable) bundle of rank \emph{2} with $h^0 \geq 4$?
\end{pr}

By using Lemma \ref{lempr} one can show that, in the semistable case,
$$
\min \{ 2d_1, d_4 \} \leq d \leq 2d_1
$$
(see the proof of Theorem \ref{prop4.8}), but no information beyond this seems to be available at the moment.


\begin{thebibliography}{CAV}
\bibitem{a} D. Arcara:
\emph{A lower bound for the dimension of the base locus of the generalized theta divisor}.
C. R. Math. Acad. Sci. Paris 340 (2005), no.2, 131--134.
\bibitem{ba2} E. Ballico:
\emph{Spanned vector bundles on algebraic curves and linear series}.
Rend. Istit. Mat. Univ. Trieste 27 (1995), no. 1-2, 137--156 (1996).
\bibitem{ba} E. Ballico:
\emph{Brill-Noether theory for vector bundles on projective curves}.
Math. Proc. Camb. Phil. Soc. 128 (1998), 483--499. 
\bibitem{be1} P. Belkale:
\emph{The strange duality conjecture for generic curves}.
J. Amer. Math. Soc. 21 (2008), no.1, 235--258.
\bibitem{be2} P. Belkale:
\emph{Strange duality and the Hitchin/WZW connection}.
Preprint, arXiv 0705.0717v5.
\bibitem{bbn} U. N. Bhosle, L. Brambila-Paz, P. E. Newstead:
\emph{On coherent systems of type $(n,d,n+1)$ on Petri curves}.
Manuscr. Math. 126 (2008), 409--441.
\bibitem{bp} L. Brambila-Paz:
\emph{Non-emptiness of moduli spaces of coherent systems}.
Int. J. Math. 19 (2008), 777--799.
\bibitem{bgn} L. Brambila-Paz, I. Grzegorczyk, P. E. Newstead:
\emph{Geography of Brill-Noether loci for small slopes}.
J. Alg. Geom. 6 (1997), 645--669.
\bibitem{bmno} L. Brambila-Paz, V. Mercat, P. E. Newstead, F. Ongay:
\emph{Nonemptiness of Brill-Noether loci}.
Int. J. Math. 11 (2000),737--760.
\bibitem{bo} L. Brambila-Paz, A. Ortega:
\emph{Brill-Noether bundles and coherent systems on special curves}.
Moduli spaces and vector bundles, London Math. Soc. Lecture Notes Ser., 359, Cambridge Univ. Press, Cambridge, 2009, pp.456--472.
\bibitem{b} D. C. Butler:
\emph{Normal generation of vector bundles over a curve}.
J. Diff. Geom. 39 (1994), 1--34.
\bibitem{bu} D. C. Butler:
\emph{Birational maps of moduli of Brill-Noether pairs}.
arXiv:alg-geom/9705009v1.
\bibitem{cgt} S. Casalaina-Martin, T. Gwena, M. Teixidor i Bigas:
\emph{Some examples of vector bundles in the base locus of the generalized theta divisor}.
arXiv:0707.2326v1.
\bibitem{ci} C. Ciliberto:
\emph{Alcune applicazione di un classico procedimento di Castelnuovo}.
Sem. di Geom., Dipart. di Matem., Univ. di Bologna, (1982-83), 17--43.
\bibitem{cs} J. Cilleruelo, I. Sols:
\emph{The Severi bound on sections of rank two semistable bundles on a Riemann surface}.
Ann. Math. 154 (2001), 739--758.
\bibitem{c} M. Coppens:
\emph{The gonality of general smooth curves with a prescribed plane nodal model}.
Math. Ann. 289 (1991), 89--93.
\bibitem{ckm} M. Coppens, C. Keem, G. Martens:
\emph{Primitive linear series on curves}.
Manuscr. Math. 77 (1992), 237--264.
\bibitem{cm} M. Coppens, G. Martens:
\emph{Linear series on 4-gonal curves}.
Math.  Nachr. 213 (2000), 35--55.
\bibitem{cm1} M. Coppens, G. Martens:
\emph{Linear series on a general $k$-gonal curve}.
Abh. Math. Sem. Univ. Hamburg 69 (1999), 347--371.
\bibitem{el} L. Ein, R. Lazarsfeld:
\emph{Stability and restrictions of Picard bundles, with an application to the normal bundles of elliptic curves}.
London Math. Soc. Lecture Notes Ser., 179, Cambridge Univ. Press, Cambridge, 1992, pp.149--156.
\bibitem{e} D. Eisenbud: 
\emph{Linear sections of determinantal varieties}.
Amer. J. Math. 110 (1988), 541--575.
\bibitem{elms} D. Eisenbud, H. Lange, G. Martens, F.-O. Schreyer:
\emph{The Clifford dimension of a projective curve}.
Comp. Math. 72 (1989), 173--204.
\bibitem{g} M. L. Green:
\emph{Koszul cohomology and the geometry of projective varieties}.
J. Differential Geom. 19 (1984), no.1, 125--171.
\bibitem{gl}M. L. Green, R. Lazarsfeld:
\emph{On the projective normality of complete linear series on an algebraic curve}.
Invent. Math. 83 (1985), no.1, 73--90.
\bibitem{gt}T. Gwena, M. Teixidor i Bigas:
\emph{Maps between moduli spaces of vector bundles and the base locus of the theta divisor}.
Proc. Amer. Math. Soc. 137 (2009), no.3, 853--861. 
\bibitem{k} S. Kim:
\emph{On the Clifford sequence of a general $k$-gonal curve}.
Indag. Mathem. 8 (1997), 209--216.
\bibitem{lna} H. Lange, M.S. Narasimhan:
\emph{Maximal subbundles of rank two vector bundles on curves}.
Math. Ann. 266 (1983), 55--72. 
\bibitem{ln} H. Lange, P. E. Newstead: 
\emph{On Clifford's theorem for rank-3 bundles}.
Rev. Mat. Iberoamericana 22 (2006), 287--304.
\bibitem{li} Y. Li:
\emph{Spectral curves, theta divisors and Picard bundles}.
 Int. J. Math. 2 (1991), 525--550. 
\bibitem{mo} A. Marian, D. Oprea:
\emph{The level-rank duality for non-abelian theta  functions}.
Invent. Math. 168 (2007), no.2, 225--247.
\bibitem{ms} G. Martens, F.-O. Schreyer:
\emph{Line bundles and syzygies of trigonal curves}.
Abh. Math. Sem. Univ. Hamburg 56 (1986), 169--189.
\bibitem{m1} V. Mercat: 
\emph{Le probl\`eme de Brill-Noether pour des fibr\'es stables de petite pente}.
J. reine angew. Math. 506 (1999), 1--41.
\bibitem{m2} V. Mercat:
\emph{Fibr\'es stables de pente 2}.
Bull. London Math. Soc. 33 (2001), 535--542.
\bibitem{m} V. Mercat:
\emph{Clifford's theorem and higher rank vector bundles}.
Int. J. Math. 13 (2002), 785--796.
\bibitem{pr} K. Paranjape, S. Ramanan: 
\emph{On the canonical ring of a curve}.
Algebraic Geometry and Commutative Algebra in Honor of Masayoshi Nagata (1987), 503--516. 
\bibitem{p} M. Popa:
\emph{On the base locus of the generalized theta divisor}.
C. R. Acad. Sci. Paris S\'er. I Math. 329 (1999), no.6, 507--512.
\bibitem{pro}M. Popa, M. Roth:
\emph{Stable maps and Quot schemes}.
Invent. Math. 152 (2003), no.3, 625--663.
\bibitem{re} R. Re:
\emph{Multiplication of sections and Clifford bounds for stable vector bundles on curves}.
Comm. in Alg. 26 (1998), 1931--1944. 
\bibitem{s} O. Schneider:
\emph{Sur le dimension de l'ensemble des points base du fibr\'e d\'eterminant sur $SU_C(r)$}.
Ann. Inst. Fourier (Grenoble) 57 (2007), no.2, 481--490.
\bibitem{tan} X.-J. Tan:
\emph{Clifford's theorems for vector bundles}.
Acta Math. Sin. 31 (1988), 710--720.
\bibitem{t1} M. Teixidor i Bigas:
\emph{Brill-Noether theory for stable vector bundles}.
Duke Math. J. 62 (1991), 385--400.
\bibitem{t2} M. Teixidor i Bigas:
\emph{Rank two vector bundles with canonical determinant}.
Math. Nachr. 265 (2004), 100--106.
\bibitem{t3} M. Teixidor i Bigas:
\emph{Petri map for rank two vector bundles with canonical determinant}.
Compos. Math. 144 (2008), 705--720.
\bibitem{t4} M. Teixidor i Bigas:
\emph{Syzygies using vector bundles}.
Trans. Amer. Math. Soc. 359 (2007), no.2, 897--908.
\bibitem{v} C. Voisin: 
\emph{Sur l'application de Wahl des courbes satisfaisant la condition de Brill-Noether-Petri}.
Acta Math. 168 (1992), 249--272.

\end{thebibliography}
\end{document}